


\documentclass[reqno,11pt]{amsart}

\usepackage{graphicx}


\usepackage{latexsym}
\usepackage{amssymb}
\usepackage{amsfonts}
\usepackage{amscd}
\usepackage{amsmath,amsthm}



%

\theoremstyle{plain} 

\newtheorem{lemma}{Lemma}[section]
\newtheorem{proposition}[lemma]{Proposition}
\newtheorem{theorem}[lemma]{Theorem}
\newtheorem{corollary}[lemma]{Corollary}

%

\theoremstyle{definition}

\newtheorem{example}[lemma]{Example}
\newtheorem{definition}[lemma]{Definition}

\theoremstyle{remark}

\newtheorem{remark}[lemma]{Remark}


\numberwithin{equation}{section}


%
%

\def\NN{{\mathbb N}}

\def\PP{{\mathbb P}}

\def\ZZ{{\mathbb Z}}

%
%


\def\0ol{{\bar 0}}
\def\1ol{{\bar 1}}
\def\2ol{{\bar 2}}
\def\ol2{{\bar 2}}
\def\3ol{{\bar 3}}
\def\4ol{{\bar 4}}
\def\5ol{{\bar 5}}
\def\6ol{{\bar 6}}
\def\7ol{{\bar 7}}
\def\8ol{{\bar 8}}
\def\9ol{{\bar 9}}

\def\bold0{{\bf 0}}
\def\bold1{{\bf 1}}
\def\bold2{{\bf 2}} 
\def\bold3{{\bf  3}}
\def\bold4{{\bf 4}}
\def\bold5{{\bf 5}}
\def\bold6{{\bf 6}}
\def\bold7{{\bf 7}}
\def\bold8{{\bf 8}}
\def\bold9{{\bf 9}}

%
%

%
%
%

\def\P2Skly{\PP^2_{Skly}}

\def\Alt{\operatorname {Alt}}

\def\Ext{\operatorname {Ext}}

\def\GL{\operatorname {GL}}

\def\Hom{\operatorname {Hom}}

\def\pd{{\operatorname {\partial}}}

\def\PGL{\operatorname {PGL}}

\def\sgn{\operatorname {sgn}}


\def\Aut{\operatorname{Aut}}

\def\det{\operatorname{det}}

\def\dim{\operatorname{dim}}

\def\Ext{\operatorname{Ext}}

\def\gldim{\operatorname{gldim}}

\def\Hom{\operatorname{Hom}}

\def\id{\operatorname{id}}

\def\Im{\operatorname{Im}}

\def\pd{{\partial}}

\def\Proj{\operatorname{Proj}}

\def\Sym{\operatorname{Sym}}

\def\uExt{\operatorname{\underline{Ext}}}

\def\ul1{\operatorname{\underline{1}}}

\def\l{\leftarrow}

\def\d{\downarrow}

\def\a{\alpha}
\def\b{\beta}
\def\c{\gamma}
\def\d{\delta}

\def\g{\gamma}
\def\l{\lambda}
\def\om{\omega}
\def\s{\sigma}
\def\t{\tau}

%
%

\def\sfv{{\sf v}}
\def\sfw{{\sf w}}

%
%

\def\cal{\mathcal}

\def\cD{{\cal D}}

\def\cL{{\cal L}}

\def\cV{{\cal V}}


\def\dirlim{\mathop{\vtop{\baselineskip -100pt\lineskip -1pt\lineskiplimit 0pt
\setbox0\hbox{lim}\copy0\hbox to \wd0{\rightarrowfill}}}\limits}
\def\invlim{\mathop{\vtop{\baselineskip -100pt\lineskip -1pt\lineskiplimit 0pt
\setbox0\hbox{lim}\copy0\hbox to \wd0{\leftarrowfill}}}\limits}

\def\I11{{1 \kern -0.8pt \! \mbox{l}}}
\def\mumu{{\mu\kern-4.2pt\mu}}
\def\bfmu{{\mu\kern-4.2pt\mu}}
\def\2slash{\backslash \! \backslash}


\def\boxtimes{\setbox0\hbox{$\Box$}\copy0\kern-\wd0\hbox{$\times$}}


\def\hdet{\operatorname{hdet}}

\def\GrAut{\operatorname {GrAut}}

\def\<{\langle}
\def\>{\rangle}

\pagenumbering{arabic}

\usepackage{geometry}
\geometry{top=3truecm,bottom=2.8truecm,left=2.4truecm,right=2.4truecm}
\numberwithin{equation}{section}

\def\red{\operatorname{red}}

\begin{document}
 
\title{The Classification of 3-dimensional Noetherian Cubic Calabi-Yau Algebras}

\author{Izuru Mori}

\address{
Department of Mathematics,
Graduate School of Science,
Shizuoka University,
836 Ohya, Suruga-ku, Shizuoka 422-8529, Japan}
\email{mori.izuru@shizuoka.ac.jp}

\author{Kenta Ueyama}

\address{
Department of Mathematics,
Faculty of Education,
Hirosaki University,
1 Bunkyocho, Hirosaki, Aomori 036-8560, Japan}
\email{k-ueyama@hirosaki-u.ac.jp}

\thanks {
The first author was supported by JSPS Grant-in-Aid for Scientific Research (C) 25400037.
The second author was supported by JSPS Grant-in-Aid for Young Scientists (B) 15K17503.} 

\keywords{Calabi-Yau algebras, superpotentials, homological determinants.}

\subjclass[2010]{16E65, 16S38, 16W50}


\begin{abstract} It is known that every 3-dimensional noetherian Calabi-Yau algebra generated in degree 1 is isomorphic to a Jacobian algebra of a superpotential.  Recently, S. P. Smith and the first author classified all superpotentials whose Jacobian algebras are 3-dimensional noetherian quadratic Calabi-Yau algebras.  The main result of this paper is to classify all superpotentials whose Jacobian algebras are 3-dimensional noetherian cubic Calabi-Yau algebras.  As an application, we show that if $S$ is a 3-dimensional noetherian cubic Calabi-Yau algebra and $\sigma$ is a graded algebra automorphism of $S$, then the homological determinant of $\sigma$ can be calculated by the formula $\operatorname{hdet} \sigma=(\operatorname{det} \sigma)^2$ with one exception.
\end{abstract}

\maketitle

\section{Introduction} 

Throughout this paper, we fix an algebraically closed field $k$ of characteristic 0, and we assume that all vector spaces, algebras and unadorned tensor products are over $k$.  
In noncommutative algebraic geometry, AS-regular algebras are the most important class of algebras to study.  In fact, the classification of 3-dimensional (noetherian) AS-regular algebras (generated in degree 1) using algebraic geometry is regarded as a starting point of noncommutative algebraic geometry (\cite {AS}, \cite{ATV}).  On the other hand, in representation theory of algebras, Calabi-Yau algebras are important class of algebras to study.   Since every connected graded Calabi-Yau algebra is AS-regular (\cite {RRZ}), it is interesting to study such algebras from the point of view of both noncommutative algebraic geometry and representation theory.  In particular, since every $m$-Koszul 3-dimensional Calabi-Yau algebra $S$ is isomorphic to a Jacobian algebra $J(\sfw_S)$ of a unique superpotential $\sfw_S$ up to non-zero scalar multiples by \cite {B} (this holds true in the higher dimensional cases by \cite {BSW}, \cite {MS2}), 
it is interesting to study such algebras using both algebraic geometry and superpotentials.  
If $S$ is a 3-dimensional noetherian Calabi-Yau algebra generated in degree 1 over $k$, then $S$ is either 2-Koszul (quadratic) or 3-Koszul (cubic), so $S=J(\sfw_S)$ for some unique superpotential $\sfw_S$.  In \cite {MS1}, S. P. Smith and the first author of this paper focused on studying 3-dimensional noetherian quadratic Calabi-Yau algebras by these points of view.  As a continuation, in this paper, we focus on studying 3-dimensional noetherian cubic Calabi-Yau algebras. 

This paper is organized as follows:  In Section 2, we collect some preliminary results which are needed in this paper.  

Section 3 is the heart of this paper.  Let $S$ be a 3-dimensional noetherian Calabi-Yau algebra.  If $S$ is quadratic, then $\sfw_S\in V^{\otimes 3}$ where $V$ is a 3-dimensional vector space over $k$.  In \cite {MS1},  all $\sfw\in V^{\otimes 3}$ such that $J(\sfw)$ are 3-dimensional Calabi-Yau were classified.  On the other hand, if $S$ is cubic, then $\sfw_S\in V^{\otimes 4}$ where $V$ is a 2-dimensional vector space over $k$.  
A natural next project is to classify all $\sfw\in V^{\otimes 4}$ such that $J(\sfw)$ are 3-dimensional Calabi-Yau.  The main result of this paper is to classify all such superpotentials.  By this complete classification, we obtain the following results: 
\begin{enumerate}
\item{} We compute all possible point schemes for 3-dimensional noetherian cubic Calabi-Yau algebras (Theorem \ref{thm.csp}).  By this computation, we see that not all bidegree (2, 2) divisors in $\PP^1\times \PP^1$ appear as point schemes.  This result contrasts to the fact that all degree 3 divisors in $\PP^2$ appear as point schemes of 3-dimensional noetherian quadratic Calabi-Yau algebras (\cite {MS1}).
\item{} We show that $J(\sfw)$ is 3-dimensional Calabi-Yau except for five algebras up to isomorphisms (Theorem \ref{thm.deg}).  
\item{} We show that $J(\sfw)$ is 3-dimensional Calabi-Yau if and only if it is a domain (Corollary \ref{cor.dom}) as in the quadratic case (\cite {MS1}).    
\end{enumerate}

Section 4 provides a further application of the classification.  The homological determinant plays an important role in invariant theory for AS-regular algebras (\cite{JZ}, \cite{KKZ1}, \cite{KKZ2}, \cite{CKWZ}, \cite{MU} etc.).  Contrary to its importance, it is rather mysterious and not easy to calculate from the definition.  
If $S=T(V)/(R)$ is a 3-dimensional noetherian quadratic Calabi-Yau algebra where $R\subset V\otimes V$, then it was shown in \cite {MS2} that $\hdet \s=\det \s|_V$ for every $\s\in \GrAut S$ if and only if $\sfw_S\not \in \Sym ^3V$.  This means that for most of 3-dimensional noetherian quadratic Calabi-Yau algebras $S=T(V)/(R)$, $\hdet \s=\det \s|_V$ for every $\s\in \GrAut S$.  By \cite[Lemma 7.3 (2)]{KK}, for typical examples of 3-dimensional noetherian cubic AS-regular algebras $S=T(V)/(R)$ where $R\subset V\otimes V\otimes V$, it holds that $\hdet \s=(\det \s|_V)^2$ for every $\s\in \GrAut S$, so it is natural to expect that, for a 3-dimensional noetherian cubic Calabi-Yau algebra $S=T(V)/(R)$, 
$\hdet \s=(\det \s|_V)^2$ for every $\s\in \GrAut S$ if and only if $\sfw_S\not \in \Sym ^4V$.  Although one direction of the above expectation is true (Corollary \ref{cor.hdd}), we will show a slightly more striking result, namely, for a 3-dimensional noetherian cubic Calabi-Yau algebra $S=T(V)/(R)$, $\hdet \s=(\det \s|_V)^2$ for every $\s\in \GrAut S$ if and only if 
$$S\not \cong k\<x, y\>/(xy^2+yxy+y^2x+\sqrt{-3}x^3, yx^2+xyx+x^2y+\sqrt {-3}y^3)$$
(Theorem \ref{thm.exc}).  

For a 3-dimensional noetherian quadratic Calabi-Yau algebra $S=T(V)/(R)$, it was also shown in \cite {MS1} that $\sfw_S\not \in \Sym ^3V$ if and only if $S$ is a deformation quantization of $k[x, y, z]$, and $\sfw_S\in \Sym ^3V$ if and only if $S$ is a Clifford algebra.  In the last section, we will show that something similar holds in one direction (Theorem \ref{thm.dq}, Theorem \ref{thm.cli}).

\section{Preliminaries} 

Throughout this paper, let $k$ be an algebraically closed field of characteristic 0, and $V$ a finite dimensional vector space over $k$.  In this section,  we collect some preliminary results which are needed in this paper.  

\subsection{Superpotentials} 

We define the action of $\theta \in \mathfrak S_m$ on $V^{\otimes m}$ by
$$\theta(v_1\otimes \cdots \otimes v_m):=v_{\theta(1)}\otimes \cdots \otimes v_{\theta(m)}.$$  
Specializing to the $m$-cycle $\phi\in \mathfrak S_m$, we define
$$\phi(v_1\otimes v_2\otimes \cdots \otimes v_{m-1}\otimes v_m):=v_{m}\otimes v_1\otimes \cdots \otimes v_{m-2}\otimes v_{m-1}.$$
We define linear maps  
$c, s, a:V^{\otimes m}\to V^{\otimes m}$ by
\begin{align*}
& c(\sfw):=\frac{1}{m}\sum _{i=0}^{m-1}\phi ^i(\sfw),\\
& s(\sfw):=\frac{1}{|\mathfrak S_m|}\sum _{\theta \in \mathfrak S_m}\theta (\sfw),\\ 
& a(\sfw):=\frac{1}{|\mathfrak S_m|}\sum _{\theta \in \mathfrak S_m}(\operatorname{sgn}\theta) \theta(\sfw).
\end{align*}   
We define the following subspaces of $V^{\otimes m}$: 
\begin{align*}
& \operatorname{Sym}^mV:=\{\sfw \in V^{\otimes m}\mid \theta(\sfw )=\sfw\;  \textnormal { for all } \theta\in \mathfrak S_m\}, \\
& \operatorname{Alt}^mV:=\{\sfw \in V^{\otimes m}\mid \theta(\sfw )=(\operatorname {sgn} \theta)\sfw\;  \textnormal { for all } \theta\in \mathfrak S_m\}.
\end{align*} 
It is easy to see that $\Sym^mV=\Im s$ and $\Alt^mV=\Im a$.  In this paper, $\Im c$ also plays an important role.  
 
\begin{definition} 
\label{de.tw.spp}
Let $\sfw\in V^{\otimes m}$ and $\s \in \GL(V)$. We call $\sfw$
\begin{enumerate}
  \item 
  a \emph{superpotential} if $\phi (\sfw)=\sfw$, 
  \item 
  a \emph{$\s$-twisted superpotential} if $(\s\otimes \id^{\otimes m-1})\phi (\sfw)=\sfw$,
  \item 
  a \emph{twisted superpotential} if it is a  $\s$-twisted superpotential for some $\s$. 
\end{enumerate}  
\end{definition} 

\begin{lemma} \cite [Lemma 6.1]{MS2} 
For $\sfw\in V^{\otimes m}$, $\sfw$ is a superpotential if and only if $\sfw\in \Im c$.  
\end{lemma}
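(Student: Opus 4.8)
The plan is to recognize $c$ as the averaging projection onto the fixed subspace of the cyclic group $\langle \phi\rangle$ acting on $V^{\otimes m}$, and then read off both implications from the two elementary identities $\phi\circ c=c$ and $c(\sfw)=\sfw$ for $\phi$-fixed $\sfw$. The only structural fact needed is that $\phi$ has order dividing $m$ as an operator on $V^{\otimes m}$: since $\phi$ is the $m$-cycle in $\mathfrak S_m$, we have $\phi^m=\id$, which is exactly what makes the cyclic average $c=\frac1m\sum_{i=0}^{m-1}\phi^i$ well behaved.

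First I would verify the key identity $\phi\circ c=c$. Multiplying the defining sum by $\phi$ and reindexing gives $\phi\circ c=\frac1m\sum_{i=0}^{m-1}\phi^{i+1}=\frac1m\sum_{j=1}^{m}\phi^{j}$, and because $\phi^m=\phi^0=\id$ this last sum again runs over a full set of representatives of $\ZZ/m\ZZ$, so it equals $c$. The same reindexing shows $\phi^i\circ c=c$ for every $i$, and hence $c^2=c$, i.e. $c$ is idempotent; this is the conceptual reason that $\Im c$ should coincide with the $\phi$-fixed subspace.

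With these identities in hand the two directions are immediate. For the forward implication, if $\sfw$ is a superpotential then $\phi(\sfw)=\sfw$, so $\phi^i(\sfw)=\sfw$ for all $i$, and the definition collapses to $c(\sfw)=\frac1m\sum_{i=0}^{m-1}\sfw=\sfw$, placing $\sfw=c(\sfw)$ in $\Im c$. For the converse, if $\sfw=c(\sfu)$ for some $\sfu\in V^{\otimes m}$, then applying $\phi$ and using $\phi\circ c=c$ gives $\phi(\sfw)=\phi\bigl(c(\sfu)\bigr)=c(\sfu)=\sfw$, so $\sfw$ is a superpotential.

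I expect no serious obstacle here: the statement is the standard averaging lemma for the finite cyclic group $\langle\phi\rangle\cong\ZZ/m\ZZ$, available because $\operatorname{char}k=0$ makes the scalar $1/m$ legitimate. The only point that warrants a moment's care is the reindexing of the cyclic sum, where one must invoke $\phi^m=\id$ to fold $\phi^m$ back to $\phi^0$; everything else follows from linearity.
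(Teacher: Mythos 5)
Your proof is correct and complete: the identity $\phi\circ c=c$ (via reindexing with $\phi^m=\id$) gives the converse direction, and the collapse $c(\sfw)=\sfw$ for $\phi$-fixed $\sfw$ gives the forward direction, which is exactly the standard cyclic-averaging argument. The paper itself does not prove this lemma but cites it from \cite[Lemma 6.1]{MS2}, and your argument is the expected one behind that citation, so there is nothing further to reconcile.
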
 

We call $\Im c$ the \emph{space of superpotentials}. 
For $\s\in \GL(V)$ and $\sfw\in V^{\otimes m}$, we often write $\s(\sfw):=\s^{\otimes m}(\sfw)$ by abuse of notation. 

\begin{lemma} \label{lem.tso} 
For $\theta \in \mathfrak S_m, \s\in \GL(V), \sfw\in V^{\otimes m}$, we have
\begin{enumerate}
\item{} $\theta (\s(\sfw))=\s(\theta(\sfw))$,
\item{} $c(\s(\sfw))=\s(c(\sfw))$,
\item{} $s(\s(\sfw))=\s(s(\sfw))$, 
\item{} $a(\s(\sfw))=\s(a(\sfw))$. 
\end{enumerate}
\end{lemma}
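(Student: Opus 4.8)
The plan is to reduce all four identities to a single fact: the $\mathfrak{S}_m$-action permuting tensor factors commutes with the diagonal $\GL(V)$-action $\sfw \mapsto \s^{\otimes m}(\sfw)$. In other words, I would prove (1) first and then read off (2)--(4) as formal consequences, since $c$, $s$, and $a$ are each built out of the operators $\theta$ by taking linear combinations.

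First I would prove (1). By linearity of both $\theta$ and $\s = \s^{\otimes m}$ it suffices to check the identity on a simple tensor $\sfw = v_1 \otimes \cdots \otimes v_m$. On one hand $\s(\sfw) = \s(v_1) \otimes \cdots \otimes \s(v_m)$, and applying $\theta$ places $\s(v_{\theta(j)})$ into the $j$-th slot, so $\theta(\s(\sfw)) = \s(v_{\theta(1)}) \otimes \cdots \otimes \s(v_{\theta(m)})$. On the other hand $\theta(\sfw) = v_{\theta(1)} \otimes \cdots \otimes v_{\theta(m)}$, and applying $\s^{\otimes m}$ slot-wise yields the very same tensor. The conceptual reason the two operations commute is precisely that $\GL(V)$ acts by the same $\s$ in every factor: $\theta$ only reshuffles which vector occupies which slot, while $\s^{\otimes m}$ acts identically on each slot regardless of its contents. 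The only thing to watch is consistent index bookkeeping, applying the convention $\theta(v_1 \otimes \cdots \otimes v_m) = v_{\theta(1)} \otimes \cdots \otimes v_{\theta(m)}$ in the same way on both sides.

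Next I would deduce (2)--(4). Since $c$, $s$, and $a$ are each a fixed linear combination of operators $\theta$ (namely the powers $\phi^i \in \mathfrak{S}_m$ for $c$, and all of $\mathfrak{S}_m$ with coefficients $1$ or $\sgn\theta$ for $s$ and $a$), and since $\s = \s^{\otimes m}$ is linear, I can pull $\s$ through each sum and apply (1) term by term. For instance $\s(c(\sfw)) = \frac{1}{m}\sum_{i=0}^{m-1}\s(\phi^i(\sfw)) = \frac{1}{m}\sum_{i=0}^{m-1}\phi^i(\s(\sfw)) = c(\s(\sfw))$, using (1) with $\theta = \phi^i$; the computations for $s$ and $a$ are identical, the factor $\sgn\theta$ in the case of $a$ being a scalar that commutes with $\s$.

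There is essentially no genuine obstacle here: the statement is the elementary commutation of the two classical Schur--Weyl actions on $V^{\otimes m}$, and the maps $c, s, a$ are realized by elements of the group algebra $k[\mathfrak{S}_m]$, which by (1) act $\GL(V)$-equivariantly. The only mild care required is the index check in (1) described above.
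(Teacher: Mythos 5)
Your proof is correct and follows essentially the same route as the paper: verify the commutation $\theta(\s(\sfw))=\s(\theta(\sfw))$ on simple tensors and extend by linearity, then obtain the identities for $c$, $s$, $a$ by pulling the linear map $\s^{\otimes m}$ through the defining sums term by term. The group-algebra remark is a nice conceptual framing but adds nothing beyond what the paper's argument already contains.
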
 

\begin{proof} For $v_1\otimes \cdots \otimes v_m\in V^{\otimes m}$,
\begin{align*}
\theta (\s(v_1\otimes \cdots \otimes v_m)) 
&=\theta (\s(v_1)\otimes \cdots \otimes \s(v_m)) 
 =\s(v_{\theta (1)})\otimes \cdots \otimes \s(v_{\theta(m)}) \\
& =\s(v_{\theta (1)}\otimes \cdots \otimes v_{\theta(m)}) 
 =\s(\theta(v_1\otimes \cdots \otimes v_m)),
\end{align*}
so $\theta (\s(\sfw))=\s(\theta(\sfw))$.  
It follows that 
\begin{align*}
c(\s(\sfw)) 
 &= \frac{1}{m}\sum _{i=0}^{m-1}\phi^i(\s(\sfw))= \frac{1}{m}\sum _{i=0}^{m-1}\s(\phi^i (\sfw)) 
 = \s\left(\frac{1}{m}\sum _{i=0}^{m-1}\phi^i(\sfw)\right )= \s(c(\sfw)).
\end{align*}
Similarly, we have $s(\s(\sfw))= \s(s(\sfw))$ and $a(\s(\sfw)) = \s(a(\sfw))$.
\end{proof} 

Let $V$ be a finite dimensional vector space over $k$.  The \emph{tensor algebra} of $V$ over $k$ is denoted by $T(V)$, which is an $\NN$-graded algebra by $T(V)_i=V^{\otimes i}$.  An \emph{$m$-homogeneous algebra} is an $\NN$-graded algebra of the form $T(V)/(R)$ where $(R)$ is the two-sided ideal generated by a subspace $R\subset V^{\otimes m}$.  The \emph{symmetric algebra} of $V$ over $k$ is denoted by $S(V)=T(V)/(R)$ where $R=\{u\otimes v-v\otimes u\in V\otimes V\mid u, v\in V\}$, which is an example of a 2-homogeneous algebra (a quadratic algebra).  Note that $S(V)_m=V^{\otimes m}/\sum _{i+j+2=m}V^i\otimes R\otimes V^j$ is the quotient space. 
We denote the quotient map by $\overline {(-)}:V^{\otimes m}\to S(V)_m$.  Since $s(\sfw)=0$ for every $\sfw \in V^i\otimes R\otimes V^j$, the linear map $s:V^{\otimes m}\to V^{\otimes m}$ induces a linear map $\widetilde {(-)}:S(V)_m\to V^{\otimes m}$, 
called the \emph{symmetrization map}.

\begin{lemma} \label{lem.stilde} 
For $\sfw\in V^{\otimes m}$, $\widetilde {\overline \sfw}=s(\sfw)$ holds.
In particular, the linear maps $\overline {(-)}:V^{\otimes m}\to S(V)_m$ and $\widetilde {(-)}:S(V)_m\to V^{\otimes m}$ induce isomorphisms $\overline {(-)}:\Sym^mV\to S(V)_m$ and $\widetilde {(-)}:S(V)_m\to \Sym^mV$ which are inverses to each other. 
\end{lemma}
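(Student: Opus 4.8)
The plan is to show that, after restriction and corestriction to $\Sym^m V$, the maps $\overline{(-)}$ and $\widetilde{(-)}$ are mutually inverse; since both are linear, this simultaneously gives that each is an isomorphism and that they are inverses. The starting point is an identity built into the construction of $\widetilde{(-)}$: because $\widetilde{(-)}$ is the map induced on the quotient $S(V)_m$ by $s\colon V^{\otimes m}\to V^{\otimes m}$, we have $\widetilde{(-)}\circ\overline{(-)}=s$ on all of $V^{\otimes m}$. Since $\overline{(-)}$ is surjective and $\Im s=\Sym^m V$, the image of $\widetilde{(-)}$ is exactly $\Sym^m V$, so $\widetilde{(-)}$ is legitimately a map $S(V)_m\to\Sym^m V$.

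First I would record that $s$ is the idempotent projection of $V^{\otimes m}$ onto $\Sym^m V$: every $\theta\in\mathfrak S_m$ fixes each element of $\Sym^m V$, so averaging gives $s|_{\Sym^m V}=\id$. Combined with $\widetilde{(-)}\circ\overline{(-)}=s$, this yields $\widetilde{(-)}\circ\overline{(-)}|_{\Sym^m V}=\id_{\Sym^m V}$, so $\widetilde{(-)}$ is a left inverse of $\overline{(-)}|_{\Sym^m V}$.

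The remaining point is $\overline{(-)}\circ\widetilde{(-)}=\id_{S(V)_m}$, that is, $\overline{s(\sfw)}=\overline{\sfw}$ for every $\sfw\in V^{\otimes m}$. By linearity it suffices to check $\overline{\theta(\sfw)}=\overline{\sfw}$ for each $\theta\in\mathfrak S_m$ on monomials $\sfw=v_1\otimes\cdots\otimes v_m$. For an adjacent transposition this is immediate, since $\theta(\sfw)-\sfw$ lies in $V^{i-1}\otimes R\otimes V^{m-i-1}\subseteq\ker\overline{(-)}$ for the relevant $i$; writing a general $\theta$ as a product of adjacent transpositions and applying this repeatedly handles all $\theta$. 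Conceptually this is just the commutativity of $S(V)$, as $\overline{v_1\otimes\cdots\otimes v_m}$ equals the product $v_1\cdots v_m$, which is unchanged by reordering. Averaging over $\mathfrak S_m$ then gives $\overline{s(\sfw)}=\overline{\sfw}$, hence $\overline{(-)}\circ\widetilde{(-)}=\id_{S(V)_m}$.

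I do not expect a genuine obstacle: the well-definedness of $\widetilde{(-)}$ is already granted, and the only real content is the congruence $\overline{\theta(\sfw)}\equiv\overline{\sfw}$, expressing that reordering tensor factors does not change the image in the commutative algebra $S(V)$. The point needing care is purely bookkeeping---keeping the two compositions straight, so that $\widetilde{(-)}\circ\overline{(-)}=s$ supplies the left inverse while the commutativity congruence supplies the right inverse.
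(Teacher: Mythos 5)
Your proof is correct and complete: the identity $\widetilde{(-)}\circ\overline{(-)}=s$ built into the construction, the fact that $s$ restricts to the identity on $\Sym^m V$, and the congruence $\overline{\theta(\sfw)}=\overline{\sfw}$ (reduction to adjacent transpositions, i.e.\ commutativity of $S(V)$) together give both inverse relations. The paper states this lemma without any proof, treating it as standard, and your argument is precisely the routine verification the authors take for granted, so there is nothing to compare against and nothing to correct.
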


Note that $\s\in \GL(V)$ extends to $\s\in \GrAut T(V)$ which induces $\s\in \GrAut S(V)$ (by abuse of notations).  

\begin{lemma} \label{lem.qq2} Let $\s\in \GL(V)$. 
\begin{enumerate}
\item{} For $\sfw\in V^{\otimes m}$, $\s(\overline {\sfw})=\overline {\s(\sfw)}$. 
\item{} For $f\in S(V)_m$, $\s\left(\widetilde {f}\right)=\widetilde {\s(f)}$.
\end{enumerate}  
\end{lemma}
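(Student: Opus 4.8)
The plan is to handle the two parts in order, deriving (2) from (1) together with the equivariance of the symmetrization operator $s$ already recorded in Lemma \ref{lem.tso}(3). The only genuinely new input is the observation that $\s\in\GL(V)$ preserves the relation space defining $S(V)$; everything else is a matter of chasing lifts through the quotient map $\overline{(-)}:V^{\otimes m}\to S(V)_m$.

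For part (1), I would first recall that $S(V)_m=V^{\otimes m}/W_m$, where $W_m:=\sum_{i+j+2=m}V^i\otimes R\otimes V^j$ with $R=\{u\otimes v-v\otimes u\mid u,v\in V\}$, and that the action of $\s$ on $S(V)_m$ is the one induced by the graded algebra automorphism of $T(V)$ extending $\s|_V$, i.e.\ by $\s^{\otimes m}$ at the level of $V^{\otimes m}$. The point to verify is that this descent is well defined, namely that $\s^{\otimes m}(W_m)\subseteq W_m$. Since $\s^{\otimes m}(V^i\otimes R\otimes V^j)=\s^{\otimes i}(V^i)\otimes\s^{\otimes 2}(R)\otimes\s^{\otimes j}(V^j)=V^i\otimes\s^{\otimes 2}(R)\otimes V^j$ (using that $\s$ is invertible, so $\s^{\otimes i}(V^i)=V^i$), it suffices to check $\s^{\otimes 2}(R)=R$; and indeed $\s^{\otimes 2}(u\otimes v-v\otimes u)=\s(u)\otimes\s(v)-\s(v)\otimes\s(u)\in R$, with equality of spaces following again from invertibility of $\s$. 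Granting this, the identity $\s(\overline{\sfw})=\overline{\s(\sfw)}$ is precisely the assertion that the induced map commutes with the quotient map, which holds by construction.

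For part (2), let $f\in S(V)_m$ and choose a lift $\sfw\in V^{\otimes m}$ with $\overline{\sfw}=f$. By the definition of the symmetrization map, $\widetilde{f}=s(\sfw)$. Then I would compute
\begin{align*}
\s\big(\widetilde{f}\big)=\s\big(s(\sfw)\big)=s\big(\s(\sfw)\big),
\end{align*}
where the second equality is Lemma \ref{lem.tso}(3). On the other hand, part (1) gives $\s(f)=\s(\overline{\sfw})=\overline{\s(\sfw)}$, so $\s(\sfw)$ is a lift of $\s(f)$, whence $\widetilde{\s(f)}=s(\s(\sfw))$ by the very definition of $\widetilde{(-)}$. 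Comparing the two expressions yields $\s(\widetilde{f})=\widetilde{\s(f)}$.

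There is no serious obstacle here; the lemma is foundational bookkeeping. The one point that must be handled with care is the well-definedness in part (1) --- that $\s^{\otimes m}$ preserves $W_m$, equivalently that $\s$ fixes the relation space $R$ setwise --- since without it neither the induced action on $S(V)_m$ nor the reduction of part (2) to Lemma \ref{lem.tso}(3) is even meaningful. Once that is in place, part (2) is a one-line consequence of part (1) and the $\s$-equivariance of $s$.
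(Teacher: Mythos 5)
Your proof is correct and follows essentially the same route as the paper's: part (1) via the automorphism of $T(V)$ descending to $S(V)$ (you merely make explicit the well-definedness check $\s^{\otimes 2}(R)=R$ that the paper leaves implicit), and part (2) by lifting $f$ to $\sfw\in V^{\otimes m}$ and combining the definition of $\widetilde{(-)}$, the $\s$-equivariance of $s$ from Lemma \ref{lem.tso}, and part (1) --- exactly the paper's chain of equalities.
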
 

\begin{proof}
(1) This is the way to define $\s\in \GrAut S(V)$ from $\s\in \GrAut T(V)$.  

(2) For every $f\in S(V)_m$, there exists $\sfw\in V^{\otimes m}$ such that $f=\overline {\sfw}$, so 
$$\s\left(\widetilde {f}\right)= \s\left (\widetilde {\overline {\sfw}}\right)=\s(s(\sfw))=s(\s(\sfw))=\widetilde {\overline {\s(\sfw)}}=\widetilde {\s(\overline {\sfw})}
= \widetilde {\s(f)}$$
by Lemma \ref{lem.tso}, Lemma \ref{lem.stilde} and (1). 
\end{proof} 

\subsection{Calabi-Yau Algebras} 

Let $V$ be a vector space, $W\subset V^{\otimes m}$ a subspace and $\sfw\in V^{\otimes m}$.  We introduce the following notation:
\begin{align*}
& \partial W:=\{(\psi \otimes \id ^{\otimes {m-1}})(\sfw)\mid \psi \in V^*, \sfw\in W\}, \\
& W\partial :=\{(\id ^{\otimes {m-1}} \otimes \psi)(\sfw)\mid \psi \in V^*, \sfw\in W\}, \\
& \cD(W, i):=T(V)/(\partial^i W), \\
& \cD(\sfw):=\cD(k\sfw, 1),\\
& J(\sfw):=\cD(c(\sfw)).
\end{align*}
We call $J(\sfw)$ the \emph{Jacobian algebra} of $\sfw$, and $\sfw$ the \emph{potential} of $J(\sfw)$.  
Note that $\cD(\sfw)$ and $J(\sfw)$ are $(m-1)$ homogeneous algebras. 

Choose a basis $x_1, \dots , x_n$ for $V$ so that $T(V)=k\<x_1, \dots, x_n\>$ and $S(V)=k[x_1, \dots, x_n]$.  
For $f\in k[x_1, \dots, x_n]$, the usual partial derivative with respect to $x_i$ is denoted by $f_{x_i}$. 
For a monomial $\sfw =x_{i_1}x_{i_2}\cdots x_{i_{m-1}}x_{i_m}\in k\<x_1, \dots, x_n\>_m$ of degree $m$, we define 
\begin{align*}
& \partial_{x_i}\sfw :=\begin{cases} x_{i_2}\cdots x_{i_{m-1}}x_{i_m} & \textnormal { if } i_1=i \\ 0 & \textnormal { if } i_1\neq i, \end{cases} \quad \textrm{and} \quad
 \sfw \partial_{x_i}:=\begin{cases} x_{i_1}x_{i_2}\cdots x_{i_{m-1}} & \textnormal { if } i_m=i \\ 0 & \textnormal { if } i_m\neq i. \end{cases} 
\end{align*}
We extend the maps $\partial _{x_i}:k\<x_1, \dots, x_n\>_m\to k\<x_1, \dots, x_n\>_{m-1}$ by linearity.  In this notation, $\cD(\sfw)=k\<x_1, \dots, x_n\>/(\partial _{x_1}\sfw, \dots, \partial _{x_n}\sfw)$.  

\begin{lemma} \label{lem.pd} Let $V$ be a vector space with a basis $x_1, \dots, x_n$.  
\begin{enumerate}
\item{} For every $f\in S(V)_m$, $\widetilde {f_{x_i}}=m\partial _{x_i}\widetilde f$. 
\item{} For every $\sfw\in \Sym^mV$, $\widetilde {\overline {\sfw}_{x_i}}=m\partial _{x_i}\sfw$.  
\end{enumerate}
\end{lemma}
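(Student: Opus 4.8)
The plan is to prove (1) directly and obtain (2) as an immediate consequence. For the reduction: given $\sfw \in \Sym^m V$, Lemma \ref{lem.stilde} gives $\widetilde{\overline{\sfw}} = \sfw$, so applying (1) to $f = \overline{\sfw}$ turns the right-hand side $m\partial_{x_i}\widetilde{\overline{\sfw}}$ into $m\partial_{x_i}\sfw$ while the left-hand side is $\widetilde{\overline{\sfw}_{x_i}}$, which is exactly (2). Hence it is enough to treat (1).

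First I would use that both sides of (1) are $k$-linear in $f$ to reduce to the case of a single monomial $f = x_1^{a_1}\cdots x_n^{a_n}$ with $a_1 + \cdots + a_n = m$. The next step is to make the symmetrization map explicit on such a monomial. Since $\widetilde{(-)}$ is induced by $s$, lifting $f$ to the tensor $x_1^{\otimes a_1}\otimes\cdots\otimes x_n^{\otimes a_n}$ and averaging over $\mathfrak S_m$ gives
\[
\widetilde{f} = s\bigl(x_1^{\otimes a_1}\otimes\cdots\otimes x_n^{\otimes a_n}\bigr) = \frac{a_1!\cdots a_n!}{m!}\sum_{u} u,
\]
where $u$ ranges over all distinct words using each $x_j$ exactly $a_j$ times; the coefficient records that the stabilizer of the chosen tensor in $\mathfrak S_m$ has order $a_1!\cdots a_n!$, so each word is hit that many times.

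Then I would apply the two operations to this formula and compare. The operator $\partial_{x_i}$ deletes a leading $x_i$ and annihilates every word not beginning with $x_i$; deletion sets up a bijection between the words above that begin with $x_i$ and the distinct words using $x_j$ with multiplicity $a_j$ for $j\neq i$ and $x_i$ with multiplicity $a_i-1$. This gives $m\,\partial_{x_i}\widetilde f = \tfrac{a_1!\cdots a_n!}{(m-1)!}\sum_v v$ with $v$ ranging over the latter set. On the other side, $f_{x_i} = a_i\,x_1^{a_1}\cdots x_i^{a_i-1}\cdots x_n^{a_n}$, and feeding this into the same symmetrization formula in degree $m-1$ produces $\widetilde{f_{x_i}} = a_i\cdot\tfrac{a_1!\cdots(a_i-1)!\cdots a_n!}{(m-1)!}\sum_v v$. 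Since $a_i\,(a_i-1)! = a_i!$, the two expressions coincide, and both vanish when $a_i=0$ because then no word begins with $x_i$.

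The argument is essentially bookkeeping; the one place to be careful is matching the multinomial normalization $a_1!\cdots a_n!/m!$ against the explicit factor $m$ on the right and the factor $a_i$ that the ordinary partial derivative produces, via the deletion bijection on words. No other input beyond linearity and the definition of $\widetilde{(-)}$ as the map induced by $s$ is needed.
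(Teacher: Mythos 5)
Your proof is correct, but it takes a different route from the paper: the paper's entire proof of this lemma is a one-line citation to \cite[Lemma 2.5]{MS1}, deferring all the work to that reference, whereas you give a self-contained argument from the definitions. Your reduction of (2) to (1) via Lemma \ref{lem.stilde} (using that $\widetilde{\overline{\sfw}}=\sfw$ on $\Sym^mV$) is exactly right, and the monomial computation checks out: for $f=x_1^{a_1}\cdots x_n^{a_n}$ the symmetrization is $\frac{a_1!\cdots a_n!}{m!}\sum_u u$ by the orbit--stabilizer count, the deletion bijection on words beginning with $x_i$ gives $m\,\partial_{x_i}\widetilde f=\frac{a_1!\cdots a_n!}{(m-1)!}\sum_v v$, and the factor $a_i$ from the ordinary partial derivative combines with $(a_i-1)!$ to match, with both sides vanishing when $a_i=0$. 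What your approach buys is self-containedness and an explicit verification of the normalization constants (the interaction between the $\frac{1}{m!}$ in $s$, the factor $m$, and the multinomial stabilizer), which the paper leaves implicit in the citation; what the citation buys is brevity, since the same bookkeeping is carried out in \cite{MS1}. There is no gap in your argument.
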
 

\begin{proof} This follows from \cite [Lemma 2.5]{MS1}. 
\end{proof} 

For an $\NN$-graded algebra $A$, we denote by $A^o$ the opposite graded algebra of $A$, and $A^e=A^o\otimes A$ the enveloping algebra of $A$.   For graded left $A$-modules $M, N$, we denote by $\Ext_A^i(M, N)$ the $i$-th derived functor of $\Hom_A(M, N)$ in the category of graded left $A$-modules, and $\uExt_A^i(M, N):=\bigoplus _{j\in \ZZ}\Ext^i_A(M, N(j))$ where $N(j)=N$ as a left $A$-module with the new grading $N(j)_n=N_{j+n}$.  A \emph{connected graded algebra} is an $\NN$-graded algebra such that $A_0=k$.  In this case, we view $k=A/A_{\geq 1}$ as a graded $A$-module.  

\begin{definition} A connected graded algebra $S$ is called a \emph{$d$-dimensional AS-regular algebra} if 
\begin{enumerate}
\item{} $\gldim S=d<\infty$, and 
\item{} for some $\ell\in \ZZ$, $\uExt^i_S(k, S)\cong \begin{cases} k(\ell) & \textnormal {if } i=d, \\
0 & \textnormal {otherwise.} \end{cases}$
\end{enumerate}
The integer $\ell$ is called the \emph{Gorenstein parameter} of $S$.  
\end{definition} 

\begin{definition} An $\NN$-graded algebra $S$ is called a \emph{$d$-dimensional Calabi-Yau algebra} if 
\begin{enumerate}
\item{} $S$ has a resolution of finite length by finitely generated graded projective left $S^e$-modules, and 
\item{} for some $\ell\in \ZZ$, $\uExt^i_{S^e}(S, S^e)\cong \begin{cases} S(\ell) & \textnormal {if } i=d, \\
0 & \textnormal {otherwise} \end{cases}$ as graded right $S^e$-modules.  
\end{enumerate}
\end{definition} 

It is known that a connected graded $d$-dimensional Calabi-Yau algebra is a $d$-dimensional AS-regular algebra (\cite[Lemma 1.2]{RRZ}). 

\begin{theorem} \cite [Proposition 2.12]{MS2}  If $S$ is a 3-dimensional noetherian AS-regular algebra generated in degree 1 over $k$, then there exists a unique twisted superpotential $\sfw_S$ up to non-zero scalar multiples such that $S=\cD(\sfw_S)$.  
Moreover, if $S$ is Calabi-Yau, then there exists a unique superpotential $\sfw_S$ up to non-zero scalar multiples such that $S=J(\sfw_S)$.  
\end{theorem}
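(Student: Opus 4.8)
The plan is to read the (twisted) superpotential off the minimal graded free resolution of the trivial module and then to use the AS-Gorenstein self-duality of that resolution to force the cyclic invariance. First I would invoke the structural fact that a $3$-dimensional noetherian AS-regular algebra $S$ generated in degree $1$ is an $(m-1)$-homogeneous, $(m-1)$-Koszul algebra $S = T(V)/(R)$ with $R \subset V^{\otimes(m-1)}$ and $m \in \{3,4\}$ (the quadratic and cubic cases), and that its minimal graded free resolution of ${}_Sk$ has the form
$$0 \to S(-m) \xrightarrow{d_3} S(-(m-1))^{\oplus r} \xrightarrow{d_2} S(-1)^{\oplus n} \xrightarrow{d_1} S \to k \to 0,$$
where $n = \dim V$, $r = \dim R$, and the Gorenstein parameter is $\ell = m$. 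Here $d_1$ is given by a basis of $V$ and $d_2$ is induced by the inclusion $R \hookrightarrow V^{\otimes(m-1)}$. Since $m - (m-1) = 1$, the degree-$0$ map $d_3$ is multiplication by an element of $S_1 = V$, hence is encoded by a tensor $\sfw \in V \otimes R \subset V \otimes V^{\otimes(m-1)} = V^{\otimes m}$, determined up to a nonzero scalar because the top term $S(-m)$ is free of rank one.

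Next I would extract the two conditions on $\sfw$. Contracting $\sfw \in V\otimes R$ against $V^*$ in the first tensor slot yields $\partial(k\sfw) \subseteq R$, and injectivity of $d_3$ with free cokernel (the AS-regularity input) gives that the second-slot components of $\sfw$ span $R$, so in fact $\partial(k\sfw) = R$ and $S = \cD(\sfw)$. For the cyclic invariance I would apply $\RHom_S(-,S)$ to the resolution: by the AS-Gorenstein condition $\uExt^i_S(k,S) \cong k(\ell)$ for $i=3$ and $0$ otherwise, the dualized complex is, after a shift by $\ell = m$ and a twist by the Nakayama automorphism $\nu$, again a minimal free resolution of $k$ as a right module. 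Matching the dualized differentials with the original ones identifies the transpose of the rank-one map $d_3$ with $d_1$ twisted by $\nu$; transcribed to the level of tensors, and using the rigid identification $V^{**} \cong V$, this says exactly that $(\nu \otimes \id^{\otimes(m-1)})\phi(\sfw) = \sfw$, i.e. $\sfw$ is a $\nu$-twisted superpotential. Uniqueness up to scalar follows since the minimal resolution is unique up to isomorphism and $\sfw$ is precisely the one-dimensional datum of $d_3$.

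For the Calabi-Yau refinement I would use that, for a connected graded algebra, being Calabi-Yau is equivalent to being AS-regular with trivial Nakayama automorphism $\nu = \id$. Then the twisted condition collapses to $\phi(\sfw) = \sfw$, so $\sfw$ is an honest superpotential; by the lemma $\sfw \in \Im c$ and $c(\sfw) = \sfw$, whence $S = \cD(\sfw) = \cD(c(\sfw)) = J(\sfw)$. The main obstacle is the middle paragraph: making the AS-Gorenstein duality fully explicit at the level of the tensors in $V^{\otimes m}$, that is, verifying that the transpose of the top map, transported through the canonical isomorphisms and the perfect pairing underlying the AS-Gorenstein duality, reproduces exactly the cyclic permutation $\phi$ together with the Nakayama twist, with no stray sign or scalar, and that the automorphism $\nu$ so produced is the same one appearing in the definition of a $\nu$-twisted superpotential. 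This degree-by-degree bookkeeping is the real content; once $\ell = m$ is in hand the quadratic and cubic cases are treated uniformly.
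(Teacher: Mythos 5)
The paper itself offers no proof of this statement: it is imported verbatim from \cite[Proposition 2.12]{MS2}, and the only point where the paper touches its content is the proof of Theorem \ref{thm.ws}, which invokes the characterization $k\sfw_S=\bigcap V^{\otimes i}\otimes R\otimes V^{\otimes j}$ from \cite{MS2}. So the comparison is against the standard argument of \cite{MS2}, and your skeleton is indeed that argument: the resolution shape $0\to S(-m)\to S(-(m-1))^{\oplus n}\to S(-1)^{\oplus n}\to S\to k\to 0$ with $\ell=m$ from \cite{AS}, the tensor $\sfw\in V\otimes R$ encoding the top differential, Gorenstein self-duality to produce twisted cyclicity, and \cite{RRZ} to trivialize the twist in the Calabi--Yau case.

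As a proof, however, your proposal has a genuine gap, which you name yourself and then skip: the assertion that dualizing the resolution and matching it against the minimal right resolution of $k$ yields exactly $(\nu\otimes\id^{\otimes(m-1)})\phi(\sfw)=\sfw$ \emph{is} the content of the proposition, and labelling it ``degree-by-degree bookkeeping'' does not discharge it. Three places where this matters concretely. (i) Your stated reason for $\partial(k\sfw)=R$ --- ``injectivity of $d_3$ with free cokernel'' --- is not valid: $\coker d_3\cong \im d_2$ is never free here (otherwise $k$ would have projective dimension $\leq 2$); what actually forces the components of $\sfw$ to span is exactness of the dualized complex, i.e.\ the very duality you postponed. (ii) The ``stray sign'' you wave at is a real phenomenon, not a formality: for the $2$-dimensional Calabi--Yau algebra $k[x,y]$ the potential is $\sfw=x\otimes y-y\otimes x$, which satisfies $\phi(\sfw)=-\sfw$, so it is only a $(-\id)$-twisted superpotential even though the Nakayama automorphism is trivial. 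Hence ``Nakayama $=\id$ implies the twist is $\id$'' is false in general; it holds in dimension $3$ (for both $m=3$ and $m=4$) only after the sign produced by the duality is computed and seen to vanish, and without that computation the ``Moreover'' clause is unproved. (iii) Your uniqueness argument shows only that the minimal resolution determines its own top tensor up to scalar; the statement requires uniqueness among \emph{all} twisted superpotentials $\sfw'$ with $S=\cD(\sfw')$. For that one needs the observation that any such $\sfw'$ lies in $(V\otimes R)\cap(R\otimes V)$ (from $\partial(k\sfw')=R$ one gets $\sfw'\in V\otimes R$, and one application of twisted cyclicity gives $\sfw'\in R\otimes V$), together with the fact that this intersection is $1$-dimensional by the resolution --- precisely the characterization of $k\sfw_S$ that the paper uses in Theorem \ref{thm.ws}. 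So the plan is the right plan, but the proposition is not yet proved by it.
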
 

\begin{lemma} \label{lem.sww'} 
Let $\s \in \GL(V)$ and $W\subset V^{\otimes m}$ a subspace.  If $W'=\s^{\otimes m}(W)\subset V^{\otimes m}$, then  $\s$ extends to an isomorphism of graded algebras $\cD(W, i)\to \cD(W', i)$ for every $i\in \NN$.
\end{lemma}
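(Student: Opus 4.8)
The plan is to use the fact that $\s$ extends canonically to a graded algebra automorphism $\wt\s$ of $T(V)$ whose degree-$n$ component is $\s^{\otimes n}$ (the same mechanism invoked in the proof of Lemma~\ref{lem.qq2}(1)), and to show that $\wt\s$ carries the defining ideal $(\partial^i W)$ onto $(\partial^i W')$. Since any graded algebra automorphism of $T(V)$ sends the two-sided ideal generated by a homogeneous subspace $U$ to the two-sided ideal generated by its image $\wt\s(U)$, the whole statement reduces to the identity $\s^{\otimes(m-i)}(\partial^i W)=\partial^i W'$ at the level of generating subspaces; once this is known, $\wt\s$ descends to the desired isomorphism $\cD(W,i)=T(V)/(\partial^i W)\to T(V)/(\partial^i W')=\cD(W',i)$.

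The key step, which I regard as the main point of the argument, is the single-step commutation relation $\partial(\s^{\otimes m}(U))=\s^{\otimes(m-1)}(\partial U)$ for any subspace $U\subset V^{\otimes m}$. To verify it I would evaluate on a simple tensor: for $\psi\in V^*$,
$$(\psi\otimes\id^{\otimes m-1})\big(\s(v_1)\otimes\cdots\otimes\s(v_m)\big)=\psi(\s(v_1))\,\s(v_2)\otimes\cdots\otimes\s(v_m)=\s^{\otimes(m-1)}\big((\psi\circ\s\otimes\id^{\otimes m-1})(v_1\otimes\cdots\otimes v_m)\big).$$
The subtle point on which equality (rather than mere inclusion) turns is that $\s$ is invertible, so $\psi\mapsto\psi\circ\s$ is a bijection of $V^*$; as $\psi$ ranges over all of $V^*$ so does $\psi\circ\s$, and therefore the contraction operation $\partial$ commutes with the application of $\s$ up to the shift in tensor degree. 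The same computation works for the right-contraction $W\partial$ should it be needed, by symmetry.

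Iterating this relation $i$ times gives $\partial^i(\s^{\otimes m}(W))=\s^{\otimes(m-i)}(\partial^i W)$, and since $W'=\s^{\otimes m}(W)$ by hypothesis this reads $\partial^i W'=\s^{\otimes(m-i)}(\partial^i W)=\wt\s(\partial^i W)$. Consequently $\wt\s\big((\partial^i W)\big)=(\partial^i W')$, and because $\wt\s$ is bijective it induces a graded algebra isomorphism on the quotients, as required. The only step demanding genuine care is the base commutation identity together with the observation that $\psi\mapsto\psi\circ\s$ is a bijection; the induction on $i$ and the passage to quotients are routine.
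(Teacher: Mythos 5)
Your proposal is correct and follows essentially the same route as the paper's own proof: the same single-step commutation identity $\s^{\otimes(m-1)}(\partial W)=\partial(\s^{\otimes m}(W))$, established via the observation that $\psi\mapsto\psi\circ\s$ is a bijection of $V^*$ (the paper uses this implicitly when it replaces $\psi$ by $\psi\s$ in its set description), followed by induction on $i$ and descent of the automorphism of $T(V)$ to the quotients $\cD(W,i)\to\cD(W',i)$.
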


\begin{proof}
Note that 
\begin{align*}
\s^{\otimes m-1}(\partial W) 
& =\{\s^{\otimes m-1}(\psi \otimes \id^{\otimes m-1})(\sfw)\; | \; \psi\in V^*, \sfw\in W\}
 =\{(\psi \otimes \s^{\otimes m-1})(\sfw )\; | \; \psi \in V^*, \sfw\in W\} \\
& =\{(\psi\s \otimes \s^{\otimes m-1})(\sfw)\; | \; \psi \in V^*, \sfw\in W\} 
 =\{(\psi\otimes \id^{\otimes m-1})\s^{\otimes m}(\sfw)\; | \; \psi \in V^*, \sfw\in W\} \\
& =\partial (\s^{\otimes m}(W)).
\end{align*}
Since $\s^{\otimes m}(W)=W'$, $\s^{\otimes m-i}(\partial ^i W)=\partial ^i(\s^{\otimes m}(W))=\pd^iW'$ for every $i\in \NN$ by induction,  
so $\s$ extends to an isomorphism of graded algebras $\cD(W, i)=T(V)/(\pd^i W)\to \cD(W', i)=T(V)/(\pd^i W')$. 
\end{proof}

A (super)potential $\sfw\in V^{\otimes m}$ is called \emph{Calabi-Yau} if $J(\sfw)$ is Calabi-Yau.  Two potentials $\sfw, \sfw'\in V^{\otimes m}$ are called \emph{equivalent}, denoted by $\sfw \sim \sfw'$, if $\sfw'=\s(\sfw)$ for some $\s\in \GL(V)$.  

\begin{theorem} \label{thm.ws} 
Let $S=T(V)/(R), S'=T(V)/(R')$ be $m$-Koszul $d$-dimensional AS-regular algebras with the Gorenstein parameter $\ell$.  Then $S\cong S'$ as graded algebras if and only if $\sfw_{S}\sim \sfw_{S'}$.  In particular, let $\sfw, \sfw'\in \Im c\subset V^{\otimes m}$ be Calabi-Yau superpotentials.  Then $J(\sfw)\cong J(\sfw')$ as graded algebras if and only if $\sfw\sim \sfw'$.  
\end{theorem}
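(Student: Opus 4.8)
The plan is to translate the statement about algebras into a statement about their twisted superpotentials and then let the two transformation lemmas do the work. The single substantial input is the existence-and-uniqueness theorem quoted above (the $3$-dimensional case is \cite{MS2}, and its $m$-Koszul analogue in arbitrary dimension is due to \cite{B}, \cite{BSW}): an $m$-Koszul $d$-dimensional AS-regular algebra $S$ with Gorenstein parameter $\ell$ can be written $S=\cD(\sfw_S)$ for a twisted superpotential $\sfw_S\in V^{\otimes\ell}$ that is unique up to a nonzero scalar. Note that fixing $\ell$ guarantees $\sfw_S$ and $\sfw_{S'}$ live in the same space $V^{\otimes\ell}$, so that the relation $\sim$ and hence the statement make sense. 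I identify $S_1=V=S'_1$ throughout.

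For the direction ``$\sfw_S\sim\sfw_{S'}\Rightarrow S\cong S'$'', I would argue as follows. Writing $\sfw_{S'}=\s(\sfw_S)=\s^{\otimes\ell}(\sfw_S)$ with $\s\in\GL(V)$, I have $k\sfw_{S'}=\s^{\otimes\ell}(k\sfw_S)$, so Lemma \ref{lem.sww'} (applied to $W=k\sfw_S$, with $i=1$, or $i=\ell-m$ in general dimension) shows that $\s$ extends to a graded algebra isomorphism $\cD(\sfw_S)\to\cD(\sfw_{S'})$. Since $S=\cD(\sfw_S)$ and $S'=\cD(\sfw_{S'})$, this is the desired $S\cong S'$. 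This direction is essentially immediate.

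For the converse ``$S\cong S'\Rightarrow\sfw_S\sim\sfw_{S'}$'', let $\varphi\colon S\to S'$ be a graded algebra isomorphism. Since both algebras are connected graded and generated in degree $1$, $\varphi$ is the map induced by $\s:=\varphi|_V\in\GL(V)$, and comparing defining relations gives $R'=\s(R)$ in the appropriate tensor degree. Because $R$ arises from $\sfw_S$ via $\partial$ (indeed $S=\cD(\sfw_S)=T(V)/(\partial(k\sfw_S))$), the compatibility of $\partial$ with $\GL(V)$ recorded inside the proof of Lemma \ref{lem.sww'} yields $\partial(k\,\s(\sfw_S))=\s(\partial(k\sfw_S))=\s(R)=R'$, so that $\cD(\s(\sfw_S))=S'$ \emph{exactly}. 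To be allowed to invoke uniqueness for $S'$, I must still check that $\s(\sfw_S)$ is itself a twisted superpotential: if $\sfw_S$ is $\tau$-twisted, then Lemma \ref{lem.tso}(1) gives $\phi(\s(\sfw_S))=\s(\phi(\sfw_S))=(\s\tau^{-1}\otimes\s^{\otimes(\ell-1)})(\sfw_S)$, whence $(\s\tau\s^{-1}\otimes\id^{\otimes(\ell-1)})\phi(\s(\sfw_S))=\s(\sfw_S)$, so $\s(\sfw_S)$ is $\s\tau\s^{-1}$-twisted. Now $\s(\sfw_S)$ and $\sfw_{S'}$ are both twisted superpotentials in $V^{\otimes\ell}$ whose $\cD$ is $S'$, so uniqueness forces $\s(\sfw_S)=\lambda\sfw_{S'}$ for some $\lambda\in k^{\times}$. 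Finally, since $k$ is algebraically closed I can absorb $\lambda$ into a scalar automorphism $\mu\,\id$ with $\mu^{\ell}=\lambda^{-1}$, giving $\sfw_{S'}=(\mu\s)(\sfw_S)$ and hence $\sfw_S\sim\sfw_{S'}$.

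The ``in particular'' statement then follows by specialization: for Calabi-Yau superpotentials $\sfw,\sfw'\in\Im c$ one has $c(\sfw)=\sfw$, so $J(\sfw)=\cD(c(\sfw))=\cD(\sfw)$, and the Calabi-Yau case of the uniqueness theorem identifies the superpotential of $J(\sfw)$ with $\sfw$ up to scalar, hence up to $\sim$; applying the main equivalence to $S=J(\sfw)$ and $S'=J(\sfw')$ then gives $J(\sfw)\cong J(\sfw')$ iff $\sfw\sim\sfw'$. I expect the main obstacle to be conceptual rather than computational: the whole argument rests on the uniqueness of the twisted superpotential, and the only genuinely delicate step is verifying that $\s(\sfw_S)$ remains a twisted superpotential (so that uniqueness is applicable at $S'$). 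The remaining scalar bookkeeping is harmless precisely because $k$ is algebraically closed.
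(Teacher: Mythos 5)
Your proof is correct, and its backward direction ($\sfw_S\sim\sfw_{S'}\Rightarrow S\cong S'$) is identical to the paper's: both apply Lemma \ref{lem.sww'} together with the fact that $S=\cD(k\sfw_S,\ell-m)$. The forward direction, however, is genuinely different in mechanism. The paper quotes from \cite[Proposition 2.12]{MS2} the explicit intersection description $k\sfw_S=\bigcap_{s+t+m=\ell}V^{\otimes s}\otimes R\otimes V^{\otimes t}$; since $\s(R)=R'$, applying $\s^{\otimes \ell}$ to this intersection immediately yields $\s(k\sfw_S)=k\sfw_{S'}$, with no need to know anything about how $\s(\sfw_S)$ behaves under the cyclic permutation. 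You instead route through the \emph{uniqueness} clause of the same result: you show $\cD(k\,\s(\sfw_S),\ell-m)=S'$ on the nose via the $\partial$--$\GL(V)$ compatibility from the proof of Lemma \ref{lem.sww'}, then verify by the conjugation computation that $\s(\sfw_S)$ is $\s\tau\s^{-1}$-twisted (a step the paper's argument never needs), and only then invoke uniqueness of the twisted superpotential of $S'$ to get $\s(\sfw_S)=\lambda\sfw_{S'}$. What the paper's route buys is brevity: the intersection formula makes the twistedness check superfluous. What your route buys is independence from the explicit intersection description (you use only existence-and-uniqueness as a black box) plus a piece of information of independent interest, namely that the twisting automorphism transforms by conjugation, $\tau\mapsto\s\tau\s^{-1}$, under a change of presentation. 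Both arguments use algebraic closure of $k$ in the same way to absorb the final scalar (your $\mu^{\ell}=\lambda^{-1}$ is what the paper compresses into ``by adjusting the scalar''). One cosmetic caution: in the forward direction you wrote $S=\cD(\sfw_S)=T(V)/(\partial(k\sfw_S))$ with a single $\partial$, which is literally correct only when $\ell-m=1$ (as in the $3$-dimensional quadratic and cubic cases); for general $m$-Koszul $d$-dimensional algebras this should read $T(V)/(\partial^{\ell-m}(k\sfw_S))$, as you yourself note earlier when applying Lemma \ref{lem.sww'} with $i=\ell-m$, and the induction in that lemma's proof supplies exactly the needed compatibility $\s^{\otimes(\ell-i)}(\partial^i W)=\partial^i(\s^{\otimes\ell}(W))$.
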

 
\begin{proof}
An isomorphism of graded algebras $\phi:S=T(V)/(R)\to S'=T(V)/(R')$ restricts to an isomorphism of vector spaces $V^{\otimes m}/R\to V^{\otimes m}/R'$, so $\s:=\phi|_V\in \GL(V)$ has the property that $\s(R)=R'$.  
By \cite[Proposition 2.12]{MS2}, 
\begin{align*}
\s(k\sfw_S) 
 =\s\left(\bigcap _{i+j+m=\ell}V^{\otimes s}\otimes R\otimes V^{\otimes t}\right )
 =\bigcap _{i+j+m=\ell}V^{\otimes s}\otimes R'\otimes V^{\otimes t} 
 =k\sfw_{S'},
\end{align*}
so 
$\sfw_{S'}=\s(\sfw_S)$ by adjusting the scalar.  

Conversely, if $\sfw_{S'}=\s(\sfw_S)$ for some $\s\in \GL(V)$, then $\s(k\sfw_S)=k\sfw_{S'}$, so $\s$ extends to an isomorphism of graded algebras $S=\cD(k\sfw_S, \ell-m)\to S'=\cD(k\sfw_{S'}, \ell-m)$ by \cite [Theorem 1.9]{MS2} and Lemma \ref{lem.sww'}.
\end{proof}

In \cite{ATV}, Artin, Tate and Van den Bergh classified 3-dimensional noetherian AS-regular algebras generated in degree 1 using algebraic geometry.
They showed that every 3-dimensional noetherian quadratic AS-regular algebra determines, and is determined by, a geometric triple $(E, \t, \cL)$ where
$E$ is $\PP^2$ or a divisor in $\PP^2$ of degree 3, $\t$ is an automorphism of $E$, and $\cL$ is an invertible sheaf on $E$,
and also showed that every 3-dimensional noetherian cubic AS-regular algebra determines, and is determined by, a geometric triple $(E, \t, \cL)$ where
$E$ is $\PP^1\times \PP^1$ or a divisor in $\PP^1\times \PP^1$ of bidegree (2, 2), $\t$ is an automorphism of $E$, and $\cL$ is an invertible sheaf on $E$.
The scheme $E$ is called the \emph{point scheme} of $S$, and the pair $(E, \t)$ is called the \emph{geometric pair} of $S$. See \cite{ATV} for details of geometric triples.

\section{Classification of Calabi-Yau Superpotentials} 

In this section, we classify all superpotentials $\sfw$ such that $J(\sfw)$ is 3-dimensional noetherian cubic Calabi-Yau up to isomorphisms of $J(\sfw)$. 

\subsection{Calabi-Yau Properties} 

Let $S=T(V)/(R)$ be an $m$-homogeneous algebra such that $\dim V=\dim R=n$.  If we choose a basis $x_1, \dots, x_n$ for $V$ and a basis $f_1, \dots, f_n$ for $R$, then we may write $f_i=\sum _{j=1}^nm_{ij}x_j$ for some $n\times n$ matrix ${\bf M}=(m_{ij})$ whose entries are in $V^{\otimes {m-1}}$.  Define $g_j=\sum_{i=1}^nx_im_{ij}$.
If we write ${\bf x}=\left(\begin{smallmatrix} x_1 \\ \vdots \\ x_n\end{smallmatrix}\right),
{\bf f}=\left(\begin{smallmatrix} f_1 \\ \vdots \\ f_n \end{smallmatrix}\right),
{\bf g}=\left(\begin{smallmatrix}g_1 \\ \vdots \\ g_n \end{smallmatrix}\right)$, then ${\bf f}={\bf M}{\bf x}$, ${\bf g}=({\bf x}^T{\bf M})^T$.  

\begin{remark} Note that $({\bf x}^T{\bf M})^T\neq {\bf M}^T{\bf x}$. 
\end{remark} 

\begin{definition} \textnormal {\cite {ATV}}
An $m$-homogeneous algebra $S=T(V)/(R)$ is called \emph{standard} if $\dim V=\dim R=n$ and there exist a choice of a basis $x_1, \dots, x_n$ for $V$ and a choice of a basis $f_1, \dots , f_n$ for $R$ such that ${\bf g}=Q_S{\bf f}$ for some $Q_S\in \GL(n, k)$. 
\end{definition} 

Fix a basis $x_1, \dots, x_n$ for $V$.  For $\sfw\in V^{\otimes m}$, we define the $n\times n$ matrix ${\bf M}(\sfw):=(\pd_{x_i}\sfw\pd_{x_j})$ whose entries are in $V^{\otimes m-2}$.
If we write ${\bf x}=\left(\begin{smallmatrix} x_1 \\ \vdots \\ x_n\end{smallmatrix}\right)$, then ${\bf M}(\sfw)$ is the unique matrix such that $\sfw={\bf x}^T{\bf M}(\sfw){\bf x}$.  Viewing ${\bf M}(\sfw)\in M_n(S(V)^{\circ m-2})$, we define the \emph{(noncommutative) Hessian} of $\sfw$ by $H(\sfw)=\det {\bf M}(\sfw)=\det(\partial_{x_i}\sfw \partial _{x_j})\in S(V)^{\circ m-2}$ where $A\circ B$ denotes the Segre product of $A$ and $B$. 

Let $V$ be a vector space with a basis $x_1,  \dots , x_n$ and $\sfw\in V^{\otimes m}$.  
Since $J(\sfw):=T(V)/(\pd _{x_1}c(\sfw), \dots, \pd _{x_n}c(\sfw))$, 
and $\left( \pd _{x_1}c(\sfw) \; \cdots \; \pd _{x_n}c(\sfw) \right)^T ={\bf M}(c(\sfw)){\bf x}$, we have the following results: 

\begin{proposition} \label{prop.ATV}   
Let $V$ be a vector space with a basis $x_1, \dots, x_n$ and $\sfw\in V^{\otimes m}$. 
Then $J(\sfw)$ is standard if and only if $\pd _{x_1}c(\sfw), \dots, \pd_{x_n}c(\sfw)$ are linearly independent. 
\end{proposition}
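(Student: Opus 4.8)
The plan is to unwind the definition of \emph{standard} for the $(m-1)$-homogeneous algebra $J(\sfw)$ and to compare the two column vectors ${\bf f}$ and ${\bf g}$ attached to it. Write $\sfw':=c(\sfw)$, which is a superpotential since $\phi(c(\sfw))=c(\sfw)$. By construction $J(\sfw)=T(V)/(R)$ with $R$ spanned by the $n$ elements $\pd_{x_1}\sfw',\dots,\pd_{x_n}\sfw'$, and the displayed identity $(\pd_{x_1}\sfw'\ \cdots\ \pd_{x_n}\sfw')^T={\bf M}(\sfw'){\bf x}$ lets me take the defining matrix of $J(\sfw)$ (in the sense of the paragraph preceding the definition of standard) to be ${\bf M}={\bf M}(\sfw')$, so that $f_i=\pd_{x_i}\sfw'$. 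First I would record the two elementary reconstruction identities $u=\sum_j(u\pd_{x_j})x_j$ and $u=\sum_i x_i(\pd_{x_i}u)$, valid for every $u$ of positive tensor degree; applied to $u=\pd_{x_i}\sfw'$ and to $u=\sfw'\pd_{x_j}$ (using that the left and right derivatives commute) they give $f_i=\pd_{x_i}\sfw'$ and $g_j=\sum_i x_i(\pd_{x_i}\sfw'\pd_{x_j})=\sfw'\pd_{x_j}$. Thus the whole statement reduces to comparing the left derivatives $\pd_{x_j}\sfw'$ with the right derivatives $\sfw'\pd_{x_j}$.

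The converse direction is then immediate: if $J(\sfw)$ is standard, then $\dim V=\dim R=n$ by definition, and since $R$ is spanned by the $n$ elements $\pd_{x_1}\sfw',\dots,\pd_{x_n}\sfw'$, these are forced to be linearly independent. No use of the superpotential hypothesis is needed here.

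For the forward direction I would prove the key identity $\pd_{x_j}\sfw'=\sfw'\pd_{x_j}$ for every $j$, equivalently ${\bf f}={\bf g}$, working on monomials. Cyclic invariance $\phi(\sfw')=\sfw'$ says the coefficient in $\sfw'$ of a monomial $x_{i_1}\otimes\cdots\otimes x_{i_m}$ equals that of its cyclic shift $x_{i_m}\otimes x_{i_1}\otimes\cdots\otimes x_{i_{m-1}}$. Since $\phi$ restricts to a coefficient-preserving bijection from the monomials of $\sfw'$ ending in $x_j$ onto those beginning with $x_j$, and since deleting the final $x_j$ from the former yields exactly the same tensors in $V^{\otimes m-1}$ as deleting the initial $x_j$ from the latter, the elements $\sfw'\pd_{x_j}$ and $\pd_{x_j}\sfw'$ coincide. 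Granting ${\bf f}={\bf g}$, if the $\pd_{x_i}\sfw'$ are linearly independent then $\dim V=\dim R=n$, they form a basis of $R$, and ${\bf g}={\bf f}=I_n{\bf f}$ exhibits $Q_S=I_n\in\GL(n,k)$; hence $J(\sfw)$ is standard.

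The main obstacle is precisely the identity $\pd_{x_j}\sfw'=\sfw'\pd_{x_j}$: the content of the proposition is the observation that, for a cyclically invariant tensor, the left partial derivative agrees with the right partial derivative, so that ${\bf f}={\bf g}$ and one may take $Q_S$ to be the identity. Everything else is bookkeeping with the definitions of ${\bf f}$, ${\bf g}$, and standardness.
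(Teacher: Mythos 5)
Your proof is correct. Note, however, that the paper gives no argument of its own for this proposition: it simply cites \cite[Proposition 2.6]{MS1}. So what you have written is not so much a different route as a self-contained reconstruction of the content that the citation hides, and it is the right one. The heart of the matter is exactly your key identity: cyclic invariance $\phi(c(\sfw))=c(\sfw)$ forces the left and right partial derivatives of $c(\sfw)$ to coincide, $\pd_{x_j}c(\sfw)=c(\sfw)\pd_{x_j}$, so that with $f_i=\pd_{x_i}c(\sfw)$ and ${\bf M}={\bf M}(c(\sfw))$ one gets ${\bf g}={\bf f}$ and may take $Q_S=I_n\in\GL(n,k)$; the converse is, as you say, pure dimension counting, since $R$ is by construction spanned by the $n$ partials and standardness demands $\dim R=n$. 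Two small points you could make explicit: the entry-wise identity $(\pd_{x_i}c(\sfw))\pd_{x_j}=\pd_{x_i}(c(\sfw)\pd_{x_j})$ that you use to identify $g_j$ with $c(\sfw)\pd_{x_j}$ requires $m\geq 2$ (automatic in the paper's setting, where $m=4$), and in the forward direction one should record that $\dim V=n$ because $x_1,\dots,x_n$ is a basis, so that \emph{both} dimension conditions in the definition of standard are verified, not just $\dim R=n$. With those remarks your argument is complete and could stand in place of the citation.
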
 

\begin{proof} This follows from \cite [Proposition 2.6]{MS1}.  
\end{proof} 

\begin{theorem} \label{thm.atv1}
Let $V$ be a 2-dimensional vector space with a basis $x, y$ 
and $\sfw\in V^{\otimes 4}$.  
Then $J(\sfw)$ is 3-dimensional Calabi-Yau if and only if $\pd _{x}c(\sfw), \pd_{y}c(\sfw)$ are linearly independent and 
$$\cV(\pd_xc(\sfw)\pd _x, \pd_xc(\sfw)\pd _y, \pd_yc(\sfw)\pd _x, \pd_yc(\sfw)\pd _y)
=\emptyset $$ in $\PP^1\times \PP^1$.  If this is the case, then the point scheme of $J(\sfw)$ is given by $\cV(H(c(\sfw)))$.  
\end{theorem}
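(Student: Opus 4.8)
The plan is to reduce the Calabi--Yau condition for the cubic case $\sfw\in V^{\otimes 4}$ with $\dim V=2$ to the concrete geometric conditions stated, by combining the general machinery of Artin--Tate--Van den Bergh for cubic AS-regular algebras with the superpotential description already established in Section 2. First I would invoke Proposition \ref{prop.ATV}: since $J(\sfw)=T(V)/(\pd_x c(\sfw),\pd_y c(\sfw))$ is a cubic (i.e.\ $3$-homogeneous) algebra with $\dim V=2$, the linear independence of $\pd_x c(\sfw),\pd_y c(\sfw)$ is precisely the condition that $\dim V=\dim R=2$ and that $J(\sfw)$ is standard. This is the necessary starting hypothesis, because the ATV regularity criterion is phrased for standard algebras, and without it the relation space $R$ does not even have the expected dimension.

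Next I would set up the matrix ${\bf M}(c(\sfw))=(\pd_{x_i}c(\sfw)\pd_{x_j})$, whose entries lie in $V^{\otimes 2}$, so that the relations are ${\bf M}(c(\sfw)){\bf x}$ and the Hessian is $H(c(\sfw))=\det {\bf M}(c(\sfw))\in S(V)^{\circ 2}$, a bidegree $(2,2)$ form on $\PP^1\times\PP^1$. The key step is to translate the ATV regularity/Calabi--Yau criterion into the two stated geometric conditions. For a standard cubic algebra, ATV show that $3$-dimensional regularity is equivalent to the multiplication map being an isomorphism in each degree, which in turn is governed by the point scheme $E=\cV(H(c(\sfw)))\subset\PP^1\times\PP^1$ together with its automorphism $\t$; regularity holds exactly when the ``degenerate locus'' --- the common zero set of all four entries $\pd_x c(\sfw)\pd_x,\pd_x c(\sfw)\pd_y,\pd_y c(\sfw)\pd_x,\pd_y c(\sfw)\pd_y$ of ${\bf M}(c(\sfw))$ --- is empty. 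I would argue that this common vanishing locus being empty is precisely the condition that the matrix ${\bf M}(c(\sfw))$ has no ``base point'' where it drops rank catastrophically, which is what guarantees that $J(\sfw)$ is a domain of the correct Hilbert series and hence (being a superpotential algebra) Calabi--Yau rather than merely twisted. The identification of the point scheme as $\cV(H(c(\sfw)))$ then follows from the ATV description of the point scheme of a standard algebra as the vanishing of the determinant of the defining matrix.

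For the converse direction I would use Theorem \ref{thm.ws} together with the fact (stated in the excerpt) that a $3$-dimensional noetherian Calabi--Yau algebra generated in degree $1$ is cubic exactly when $\sfw_S\in V^{\otimes 4}$ with $\dim V=2$, so that every such algebra arises as $J(\sfw)$ for a genuine (untwisted) superpotential $\sfw\in\Im c$; one then checks that its relation matrix automatically satisfies both geometric conditions. The self-duality forced by $\sfw$ being a superpotential (so that the left and right relations ${\bf f}$ and ${\bf g}$ coincide up to the scalar matrix $Q_S$) is what upgrades the ATV ``twisted'' conclusion to the untwisted Calabi--Yau one, via Theorem 2.11.

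\medskip

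The hard part will be justifying the equivalence between emptiness of the $\PP^1\times\PP^1$ locus $\cV(\pd_x c(\sfw)\pd_x,\dots,\pd_y c(\sfw)\pd_y)$ and the ATV nondegeneracy hypothesis that secures $3$-dimensional regularity. This requires unwinding the ATV regularity criterion for the cubic (bidegree $(2,2)$) case and matching their ``$E=\PP^1\times\PP^1$ or a $(2,2)$-divisor with a genuine automorphism'' dichotomy against the vanishing of the four entries, i.e.\ showing that the bad cases ATV must exclude are exactly those where these four forms acquire a common zero. I expect this to reduce, after the setup above, to a citation of the relevant ATV result (and \cite{MS1} for the quadratic template), with the main genuine content being the bookkeeping that the entries of ${\bf M}(c(\sfw))$ are the four forms listed and that their common zero locus controls regularity.
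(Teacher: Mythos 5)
Your proposal is correct and follows essentially the same route as the paper, whose entire proof is the two-step citation you describe: Proposition \ref{prop.ATV} converts linear independence of $\pd_x c(\sfw), \pd_y c(\sfw)$ into standardness of $J(\sfw)$, and then \cite[Theorem 1]{ATV} supplies both the equivalence of $3$-dimensional regularity (hence, since $c(\sfw)$ is a genuine superpotential, the Calabi--Yau property) with emptiness of the common zero locus of the entries of ${\bf M}(c(\sfw))$ in $\PP^1\times\PP^1$, and the identification of the point scheme with $\cV(H(c(\sfw)))$. The "hard part" you flag is exactly what the paper delegates to the ATV citation, so no additional argument is needed beyond what you outline.
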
 

\begin{proof} This follows from Proposition \ref{prop.ATV} and \cite [Theorem 1]{ATV}. 
\end{proof} 

\subsection{Classification of four points in $\PP^1$}

Two projective schemes $E, E' \subset \PP(V)$ are called \emph{projectively equivalent}, denoted by $E\sim E'$, if $E'=\s(E)$ for some $\s\in \Aut \PP(V)\cong \PGL(V)$.  Two polynomials $f, f'\in S(V)_m$ are called \emph{equivalent}, denoted by $f\sim f'$, if $f'=\s(f)$ for some $\s\in \GL(V)$.  Note that $f\sim f'$ if and only if
$\Proj S(V)/(f)\sim \Proj S(V)/(f')$.

\begin{lemma} \label{lem.quo} 
Let $V$ be a 2-dimensional vector space with a basis $x, y$.  Every $f\in S(V)_4$ is equivalent to one of the following: 
\begin{enumerate}
\item{} $f_1=0$; 
\item{} $f_2=x^4$;
\item{} $f_3=x^3y$; 
\item{} $f_4=x^2y^2$; 
\item{} $f_5=x^4+x^2y^2$; 
\item{} $g_{\lambda}=x^4+y^4+\l x^2y^2, \l\in k\setminus \{\pm 2\}$. 
\end{enumerate}
Moreover, in case (6), 
$g_{\lambda}\sim g_{\lambda'}$ if and only if $\l' = \pm \l$ or $(2\pm \l)(2\pm \l')= 16$ or $(2\pm \l)(2\mp \l')= 16$.
\end{lemma}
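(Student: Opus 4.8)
The plan is to reduce the whole statement to the classical classification of effective degree‑$4$ divisors on $\PP^1$ up to automorphism. Since $k$ is algebraically closed, a nonzero $f\in S(V)_4$ factors as a product of four linear forms, so $\cV(f)\subset\PP^1$ is a multiset of four points; and because a scalar matrix $\mu\,\id_V$ sends $f$ to $\mu^4 f$ with $\mu^4$ ranging over all of $k^\times$, the relation $f\sim f'$ amounts exactly to: $f=f'=0$, or the divisors $\cV(f)$ and $\cV(f')$ are carried to one another by an element of $\Aut\PP^1=\PGL_2$. The two facts I will lean on are that $\PGL_2$ is sharply $3$‑transitive on $\PP^1$ and that the cross‑ratio is a complete invariant of four distinct points up to the (anharmonic) $\mathfrak S_3$‑action coming from reordering.

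First I would dispose of the configurations with at most three distinct roots. The zero form gives (1). For nonzero $f$, the partition of $4$ recording the root multiplicities is a $\GL(V)$‑invariant, of shape $(4)$, $(3,1)$, $(2,2)$, $(2,1,1)$, or $(1,1,1,1)$. In the first four shapes there are at most three distinct points, so sharp $3$‑transitivity lets me move them into standard position at $[0:1]$, $[1:0]$, $[1:1]$; matching multiplicities then shows $f$ is equivalent to exactly one of $f_2=x^4$, $f_3=x^3y$, $f_4=x^2y^2$, $f_5=x^2(x^2+y^2)=x^4+x^2y^2$, each shape forming a single orbit. This settles (2)--(5).

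It remains to treat the shape $(1,1,1,1)$ and to prove (6). Dividing $g_\lambda$ by $y^4$ and setting $u=x/y$, its roots are those of $u^4+\lambda u^2+1$; writing $u^2=s_{\pm}$ with $s_+s_-=1$ one finds the roots are $a,-a,a^{-1},-a^{-1}$ with $a^2+a^{-2}=-\lambda$, which are distinct precisely when $\lambda\neq\pm2$, and whose cross‑ratio computes to $\dfrac{\lambda+2}{\lambda-2}$. Since $\lambda\mapsto\frac{\lambda+2}{\lambda-2}$ is a Möbius transformation carrying $k\setminus\{\pm2\}$ bijectively onto $k\setminus\{0,1\}$, the set of admissible cross‑ratios of four distinct points, every $(1,1,1,1)$‑form has the same cross‑ratio as some $g_\lambda$ with $\lambda\neq\pm2$ and hence (again by $3$‑transitivity) is equivalent to it, which gives (6). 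For the equivalence criterion, $g_\lambda\sim g_{\lambda'}$ iff the two cross‑ratios lie in the same six‑element anharmonic orbit, and pushing the six substitutions $\mu\mapsto\mu,\,1/\mu,\,1-\mu,\dots$ back through $\lambda=\frac{2(\mu+1)}{\mu-1}$ yields exactly $\lambda'=\pm\lambda$ together with the four sign patterns $(2\pm\lambda)(2\pm\lambda')=16$. As an independent confirmation of sufficiency I would exhibit explicit transformations realizing the generators of the orbit: $x\mapsto\sqrt{-1}\,x,\ y\mapsto y$ sends $g_\lambda$ to $g_{-\lambda}$, while $x\mapsto x+y,\ y\mapsto x-y$ sends $g_\lambda$ to $(2+\lambda)\,g_{\lambda'}$ with $(2+\lambda)(2+\lambda')=16$; these two involutions generate a group of order $6$.

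I expect the main obstacle to be the necessity (``only if'') half of the criterion in (6): one must invoke that the cross‑ratio is a \emph{complete} invariant, so that no accidental equivalences occur beyond those listed, and then carry out the somewhat delicate bookkeeping identifying the six anharmonic substitutions with the sign patterns $\lambda'=\pm\lambda$ and $(2\pm\lambda)(2\pm\lambda')=16$, all while tracking the excluded degenerate values $\lambda=\pm2$ (equivalently cross‑ratio $0,1,\infty$). By contrast, the normal‑form reductions for the shapes $(4)$ through $(2,1,1)$ are routine consequences of sharp $3$‑transitivity.
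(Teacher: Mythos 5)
Your proposal is correct, and on the essential part of the lemma it takes a genuinely different route from the paper. The outer frame coincides: the paper also reduces to projective equivalence of the divisor $\Proj k[x,y]/(f)$ (with the same implicit scalar-adjustment you make explicit, namely that every element of $k^\times$ is a fourth power), and also disposes of the shapes with at most three distinct roots by the fact that any three points can be put in standard position. The divergence is in case (6). For existence, the paper normalizes four distinct points to $(0,1),(1,0),(1,-1),(1,-\lambda)$ and performs two explicit changes of variables, $xy(x+y)(\lambda x+y)\sim xy(x^2+y^2+\lambda xy)\sim x^4+y^4-\tfrac{2\lambda}{\sqrt{\lambda^2-4}}x^2y^2$, whereas you observe that the cross-ratio of the roots of $g_\lambda$ is $\tfrac{\lambda+2}{\lambda-2}$ and that this M\"obius map carries $k\setminus\{\pm 2\}$ bijectively onto $k\setminus\{0,1\}$, so surjectivity of the cross-ratio does the work. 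For the ``moreover'' part, the paper proves necessity by determining all $\sigma\in\GL(2,k)$ with $\sigma(g_\lambda)=g_{\lambda'}$ (diagonal, antidiagonal, or $\left(\begin{smallmatrix}\alpha&\beta\\-\xi\alpha&\xi\beta\end{smallmatrix}\right)$ with $\beta^4=\alpha^4$ and $\xi$ a fourth root of unity) and reading off $\lambda'$ case by case; you instead invoke the classical completeness of the cross-ratio as an invariant of an unordered four-point set up to the anharmonic $\mathfrak S_3$-action and push the six substitutions through $\lambda=\tfrac{2(\mu+1)}{\mu-1}$. I checked this bookkeeping: the six anharmonic values correspond exactly to $\lambda'=\pm\lambda$, $(2\pm\lambda)(2\pm\lambda')=16$, $(2\pm\lambda)(2\mp\lambda')=16$, and your two explicit involutions ($x\mapsto\sqrt{-1}\,x$, $y\mapsto y$, and $x\mapsto x+y$, $y\mapsto x-y$, the latter producing $(2+\lambda)g_{\lambda'}$ with $(2+\lambda)(2+\lambda')=16$, which matches the computation inside the paper's proof) do generate the order-$6$ group, so sufficiency is also covered. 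What the paper's brute-force matrix computation buys is self-containedness and an explicit list of the equivalence matrices, which reappear in essentially the same form in the homological-determinant analysis of Theorem \ref{thm.exc}; what your route buys is a conceptual explanation of where the sign-pattern relations come from (they are the anharmonic orbit transported by one M\"obius change of parameter) and the avoidance of solving a polynomial system in the entries of $\sigma$.
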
 

\begin{proof}   It is equivalent to classify $X:=\Proj k[x, y]/(f)$ up to projective equivalences.  Note that $f\neq 0$ if and only if $X\neq \PP^1$, and, in this case,
$X=\Proj k[x, y]/(f)$ is a set of at most four points.  Note also that every set of three points (with fixed multiplicities) are projectively equivalent to each other.  
If $X=\PP^1$, then $X=\Proj k[x, y]$.
If $X$ consists of one point $(0, 1)$ with multiplicity 4, then $X=\Proj k[x, y]/(x^4)$. 
If $X$ consists of a point $(0, 1)$ with multiplicity 3 and a point $(1, 0)$ of multiplicity 1, then $X=\Proj k[x, y]/(x^3y)$. 
If $X$ consists of two points $(0, 1), (1, 0)$ with multiplicity 2, then $X=\Proj k[x, y]/(x^2y^2)$.
If $X$ consists of a point $(0, 1)$ with multiplicity 2 and two points $(1, \sqrt {-1}), (1, -\sqrt {-1})$ of multiplicity 1, then $X=\Proj k[x, y]/(x^2(x^2+y^2))$. 
If $X=\Proj k[x, y]/(f)$ consists of four points $(0, 1), (1, 0), (1, -1), (1, -\l)$ of multiplicity 1 where $\l\neq 0, 1$, then 
\begin{align*}
f=xy(x+y)(\l x+y)
 \sim x^3y+(\frac{1}{\sqrt \l}+\sqrt \l)x^2y^2+xy^3.  
\end{align*}  
Since $\l \neq 0, 1$, we have $\frac{1}{\sqrt \l}+\sqrt \l\neq \pm 2$, so we may write $f\sim xy(x^2+y^2+\l xy)$ where $\l\neq \pm 2$.  
Moreover, one can check that
\begin{align*}
f & \sim  xy(x^2+y^2+\l xy) 
\sim x^4+y^4-\frac{2\l }{\sqrt {\l^2-4}}x^2y^2.
\end{align*}
It is easy to see that $-\frac{2\l }{\sqrt {\l^2-4}}\neq\pm 2$, so we may write $f\sim x^4+y^4+\l x^2y^2$ where $\l \neq \pm 2$.   On the other hand, if $\l \neq \pm 2$, then it is easy to see that $\Proj k[x, y]/(x^4+y^4+\l x^2y^2)$ consists of four distinct points. 

Write $g_{\lambda}:=x^4+y^4+\l x^2y^2$.  Let $\s\in \GL(2, k)$ such that $\s(g_{\l})=g_{\l'}$ for some $\l , \l '\in k\setminus \{\pm 2\}$.  By a direct calculation, we see that $\s$ is given by
\begin{align*}
\begin{pmatrix} \a & 0 \\ 0 & \d \end{pmatrix}\; (\d^4= \a^4),\quad
\begin{pmatrix} 0 & \b \\ \c & 0 \end{pmatrix}\; (\c^4= \b^4),\;\; \textrm{or}\;\;
\begin{pmatrix} \a & \b \\ -\xi\a & \xi\b \end{pmatrix}\; (\b^4= \a^4, \xi :\textrm{4-th root of unity}). 
\end{align*}
If $\s$ is in the first case or the second case, then we have $\l'=\pm \l$.
If $\s=\left(\begin{smallmatrix} \a & \b \\ -\xi\a & \xi\b \end{smallmatrix}\right)$ with $\b^2= \a^2, \xi^2=1$, then 
\begin{align*}
&(\a x+ \b y)^4+(-\xi\a x + \xi\b y)^4+\l (\a x+ \b y)^2(-\xi\a x + \xi\b y)^2
= \a^4(2+\l )\left(x^4+y^4+\frac{12-2\l}{2+\l}x^2y^2\right),
\end{align*}
so we have $\l' = \frac{12-2\l}{2+\l}$. Thus $(2+\l')(2+\l)= 16$. By similar calculations, we can show that
if $\b^2=-\a^2, \xi^2=1$, then $(2-\l')(2+\l)= 16$, and if $\b^2=\pm \a^2, \xi^2=-1$, then $(2 \pm \l')(2-\l)= 16$,
so the assertion holds.
\end{proof}
 
\subsection{Classification of Calabi-Yau Superpotentials} \label{w_i} 

Let $V$ be a 2-dimensional vector space with a basis $x, y$.  It is clear that the space of superpotentials  $\Im c\subset V^{\otimes 4}$ has a basis 
\begin{align*}
& \sfw_1=x^2y^2+xy^2x+y^2x^2+yx^2y, \\
& \sfw_2=xyxy+yxyx, \\
& \sfw_3=x^3y+x^2yx+xyx^2+yx^3, \\
& \sfw_4=y^3x+y^2xy+yxy^2+xy^3, \\
& \sfw_5=x^4, \\
& \sfw_6=y^4. 
\end{align*}
For the rest of this paper, we fix the above basis for $\Im c$.  It is easy to see that $\Sym^4V$ is a codimension 1 subspace of $\Im c$ spanned by $\sfw_1+\sfw_2, \sfw_3, \sfw_4, \sfw_5, \sfw_6$.  
We set $W':=\bigoplus _{i=3}^6k\sfw_i\subset \Sym^4V$, so that every $\sfw\in \Im c$ can be uniquely written as $\sfw=\a \sfw_1+\b \sfw_2+\sfw'$ where $\a, \b\in k$ and $\sfw'\in W'$.  

Let $\sfw, \sfw'\in \Im c$ be Calabi-Yau superpotentials.  By Theorem \ref{thm.ws} and Lemma \ref{lem.qq2}, if $J(\sfw)\cong J(\sfw')$ as graded algebras, then $\overline \sfw\sim \overline {\sfw'}$,
so we may assume that $\overline {\sfw}\in S(V)_4$ is one of the forms in Lemma \ref{lem.quo} to classify Calabi-Yau superpotentials $\sfw\in \Im c$ up to isomorphisms of $J(\sfw)$.

\begin{theorem} \label{thm.csp}
Let $V$ be a 2-dimensional vector space.  The table below (Table \ref{tab1}) gives a classification of all Calabi-Yau superpotentials $0\neq \sfw\in V^{\otimes 4}$ up to isomorphisms of $J(\sfw)$.  In each Calabi-Yau superpotential $\sfw$, 
the defining relations of $J(\sfw)$, and the geometric pair of $J(\sfw)$ are also listed in the table. 
(In the table, a curve of bidegree $(a, b)$ in $\PP^1\times \PP^1$ is denoted by $(a, b)$ by abuse of notations.  Moreover, we only describe how $\t$ acts on the components of $E$ when $E$ has more than one component.) 
\end{theorem}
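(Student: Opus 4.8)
The plan is to reduce the classification to the list of normal forms for $\overline{\sfw}\in S(V)_4$ supplied by Lemma \ref{lem.quo}, and then to analyse, case by case, the family of superpotentials lying over each normal form. By Lemma \ref{lem.qq2}(1) the quotient map $\overline{(-)}\colon\Im c\to S(V)_4$ is $\GL(V)$-equivariant, so $\sfw\sim\sfw'$ implies $\overline{\sfw}\sim\overline{\sfw'}$; combined with Theorem \ref{thm.ws} this shows that the $\GL(V)$-orbit of a Calabi-Yau superpotential is organised by the projective-equivalence class of $\overline{\sfw}$. First I would record that $\overline{\sfw_1}=4x^2y^2$, $\overline{\sfw_2}=2x^2y^2$, $\overline{\sfw_3}=4x^3y$, $\overline{\sfw_4}=4xy^3$, $\overline{\sfw_5}=x^4$, $\overline{\sfw_6}=y^4$, so that $\overline{(-)}|_{\Im c}$ is surjective with one-dimensional kernel $k(\sfw_1-2\sfw_2)$ and $\Im c=\Sym^4V\oplus k(\sfw_1-2\sfw_2)$. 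Hence every $\sfw\in\Im c$ with prescribed image $f=\overline{\sfw}$ is uniquely $\sfw=\widetilde f+t(\sfw_1-2\sfw_2)$ with $t\in k$ and $\widetilde f\in\Sym^4V$; using a representative $f$ from Lemma \ref{lem.quo} I may then assume $\overline{\sfw}$ is one of $f_1,\dots,f_5,g_\lambda$ and treat $t$ (together with $\lambda$) as the remaining parameter.

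Next, for each normal form I would apply the Calabi-Yau criterion of Theorem \ref{thm.atv1} to the family $\sfw=\widetilde f+t(\sfw_1-2\sfw_2)$. Since $\sfw\in\Im c$ we have $c(\sfw)=\sfw$, so two checks remain. The first is standardness, i.e.\ linear independence of $\partial_x\sfw$ and $\partial_y\sfw$ (Proposition \ref{prop.ATV}); for several forms this already excludes degenerate values of $t$ (for example $\overline{\sfw}=x^4$ forces $t\neq0$, since otherwise $\partial_y\sfw=0$). The second, decisive, check is that the four bidegree $(1,1)$ forms $\partial_x\sfw\partial_x$, $\partial_x\sfw\partial_y$, $\partial_y\sfw\partial_x$, $\partial_y\sfw\partial_y$ have empty common zero locus in $\PP^1\times\PP^1$. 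Computing these forms and their base locus for each $f$ and each $t$ pins down exactly which $\sfw$ are Calabi-Yau, and I expect this to be the main obstacle: it is where the asymmetry between the quadratic and cubic cases surfaces, and in particular where one sees that certain bidegree $(2,2)$ divisors fail to occur as point schemes.

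For the surviving Calabi-Yau superpotentials I would remove the redundancy within each family. The kernel line $k(\sfw_1-2\sfw_2)$ is a $\GL(V)$-subrepresentation isomorphic to $\det^2$ (it is the $\GL(V)$-stable complement of $\Sym^4V$ in $\Im c$, a one-dimensional representation on which scalars act by the fourth power), so $\s(\sfw_1-2\sfw_2)=(\det\s)^2(\sfw_1-2\sfw_2)$, while $\s(\widetilde f)=\widetilde{\s(f)}$ by Lemma \ref{lem.qq2}(2). Hence, for $\s$ in the projective stabiliser of $f$ (determined explicitly for $g_\lambda$ by the matrices listed in Lemma \ref{lem.quo}, and computed directly for $f_1,\dots,f_5$), the parameter transforms by $t\mapsto(\det\s)^2t/c$ where $\s(f)=cf$. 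This reveals when the parameter can be normalised away: the stabiliser of $x^4$ contains $\diag(\alpha,\delta)$ with $(\det\s)^2/c=\delta^2/\alpha^2$, which acts transitively on $k\setminus\{0\}$ and collapses that family to a single algebra, whereas for $g_\lambda$ the finite stabiliser leaves a genuine modulus. This produces the non-redundant rows of Table \ref{tab1} and, together with the $g_\lambda\sim g_{\lambda'}$ relations of Lemma \ref{lem.quo}, fixes the admissible ranges of the parameters.

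Finally, for each Calabi-Yau $\sfw$ I would read off the geometric data. The defining relations of $J(\sfw)=T(V)/(\partial_x\sfw,\partial_y\sfw)$ are immediate from the computation above, and by the last assertion of Theorem \ref{thm.atv1} the point scheme is $\cV(H(\sfw))=\cV(\det(\partial_{x_i}\sfw\partial_{x_j}))$, a bidegree $(2,2)$ form on $\PP^1\times\PP^1$; factoring this determinant identifies $E$ together with its multiplicities and the degenerate types ($\PP^1\times\PP^1$, unions of rulings, etc.), while the automorphism $\t$ of the geometric pair is extracted from the two multilinearisations of the relations following \cite{ATV}. Assembling the six cases then yields the table and, as a by-product, the list of which $(2,2)$-divisors actually arise.
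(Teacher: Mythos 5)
Your proposal follows essentially the same route as the paper's proof: reduce to the normal forms of Lemma \ref{lem.quo} using Theorem \ref{thm.ws} and Lemma \ref{lem.qq2}, then analyse the one-parameter family of superpotentials lying over each normal form by checking standardness (Proposition \ref{prop.ATV}) and the empty base locus criterion of Theorem \ref{thm.atv1}, and finally read off $E=\cV(H(\sfw))$ and $\tau$. The one genuine refinement on your side is the bookkeeping: you identify the kernel of $\overline{(-)}|_{\Im c}$ as the $\GL(V)$-stable line $k(\sfw_1-2\sfw_2)$ on which $\s$ acts by $(\det\s)^2$ --- this is correct, since $\sfw_1-2\sfw_2=-2c\bigl((xy-yx)\otimes(xy-yx)\bigr)$ and $c$ commutes with the $\GL(V)$-action by Lemma \ref{lem.tso} --- which gives the uniform transformation rule $t\mapsto(\det\s)^2t/c$ for the parameter. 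This single principle replaces the paper's ad hoc, case-by-case normalizations (``we may assume $\a=-\tfrac12$,'' merging $\b=-1$ with $\b=1$, and so on), and is arguably cleaner. The caveat is that what you have written is a strategy rather than the proof: for a theorem whose statement \emph{is} a table, the substance lies in the computations you defer, namely the base-locus calculations that decide Calabi--Yau-ness in Cases 3--6 (where the conditions $\a\neq 0$, resp.\ $(\a,\b)\neq(0,0),(\pm1,\pm1)$, emerge), and especially the cross-family identifications in Case 6, where the subfamilies with $\b=\pm1$ and $\b=2\a\pm1$ over $g_\lambda$ collapse onto the rows labelled (6.1) and (6.2) only after explicit coordinate changes such as $x\mapsto x-y$, $y\mapsto x+y$; your framework accommodates these via the $g_\lambda\sim g_{\lambda'}$ relations of Lemma \ref{lem.quo}, but that is precisely where the paper's proof spends its effort and where one sees that not all bidegree $(2,2)$ divisors occur as point schemes.
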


{\small
\begin{table}[htbp]
\centering
\scalebox{0.975}[1]{
\begin{tabular}{|l|l|l|l|l|}
\hline
& $\sfw$  &$\partial _x\sfw, \partial _y\sfw$  & $H(\sfw)$  & $(E,\t)$ \\
&(superpotential) &(defining relations of $J(\sfw)$) &(defining equation of $E$) &(geometric pair) \\
\hline \hline
(1)&$\sfw_1-2\sfw_2$ &$xy^2+y^2x-2 yxy$, &$(x_1y_2 - y_1x_2)^2$ &$E=2(1,1)$ \\ 
     & &$yx^2+x^2y-2 xyx$ & &  \\\hline
(2)&$\sfw_1 -2\sfw_2 -2\sfw_5$ &$xy^2+y^2x-2 yxy -2x^3$, &$(x_1y_2 - y_1x_2 -x_1x_2)$ &$E=(1,1)+(1,1)$ \\ 
     & &$yx^2+x^2y-2 xyx$ &\;$\times(x_1y_2 - y_1x_2 +x_1x_2)$ &meeting at 1 pt, \\
     &&&&$\t$ stabilizes \\
     &&&&two components.\\\hline 
(3)&$\sfw_1 -2\sfw_2 +\sfw_3$ &$xy^2+y^2x -2yxy$ &$2(x_1y_2 -y_1x_2)^2$ & $E$ is an irr.~curve\\ 
     & &\;\;$+x^2y+xyx+yx^2$, &\;$+2(y_1x_2 +x_1y_2)x_1x_2$ &with a cusp. \\ 
     &&$yx^2+x^2y -2xyx +x^3$ &\;\;$-x_1^2x_2^2$& \\\hline
(4.1)&$\sfw_1$ &$xy^2+y^2x$, &$0$ & $E=\PP^1\times \PP^1$ \\ 
     & &$yx^2+x^2y$ & & \\ \hline
(4.2)&$\sfw_1+2\sfw_2$ &$xy^2+y^2x+2 yxy$, &$(x_1y_2 + y_1x_2)^2$ &$E=2(1,1)$ \\ 
     & &$yx^2+x^2y+2 xyx$ & & \\\hline
(4.3)&$\a\sfw_1+\b\sfw_2$ &$\a(xy^2+y^2x)+\b yxy$, &$\a x_1^2y_2^2+\a y_1^2x_2^2$ &$E=(1,1)+(1,1)$ \\
     &($\a \neq0, \b \neq 0,$ &$\a(yx^2+x^2y)+\b xyx$ &\;$+\b x_1y_1x_2y_2$ &meeting at 2 pts, \\ 
     &$\b \neq \pm 2\a$)&&&$\t$ stabilizes  \\
     &&&&two components.\\\hline
(5.1)&$\sfw_1+4\sfw_5$ &$xy^2+y^2x +4x^3$, &$x_1^2x_2^2$ &$E=2(1,0)+2(0,1)$, \\
     & &$yx^2+x^2y$ & &$\t$ interchanges  \\
     &&&&two components\\\hline
(5.2)&$\sfw_1+2\sfw_2+8\sfw_5$ &$xy^2+y^2x+2 yxy +8x^3$, &$(x_1y_2 + y_1x_2 -2x_1x_2)$ &$E=(1,1)+(1,1)$ \\ 
     & &$yx^2+x^2y+2 xyx$ &\;$\times(x_1y_2 + y_1x_2 +2x_1x_2)$ &meeting at 1 pt, \\
     &&&&$\t$ interchanges \\
     &&&&two components.\\\hline   
(5.3)&$\a \sfw_1+\b\sfw_2+\sfw_5$ &$\a(xy^2+y^2x)+\b yxy +x^3$, &$\a\b x_1^2y_2^2+\a\b y_1^2x_2^2 $ & $E$ is an irr.~curve\\ 
     &($\a\neq 0,\b \neq 0,$ &$\a(yx^2+x^2y)+\b xyx$ &\;$+\b^2x_1y_1x_2y_2 -\a x_1^2x_2^2$ &with a biflecnode. \\ 
     &$\b \neq \pm 2\a$)&&& \\\hline
(6.1)&$\a \sfw_1+\b\sfw_2+\sfw_5+\sfw_6$ &$\c yxy +x^3$, &$x_1y_1x_2y_2$ &$E=(1,0)+(1,0)$ \\
     &($\a =0, \b \neq 0,\pm 1$) or  &$\c xyx +y^3$ & &\qquad $+(0,1)+(0,1)$, \\
     &($\b = \pm 1, \a \neq 0,\pm 1$)&($\c \neq 0, \pm 1$)&&$\t$ circulates\\
     &&&&four components.\\\hline
(6.2)&$\a \sfw_1+\b\sfw_2+\sfw_5+\sfw_6$ &$xy^2+y^2x +\c x^3$, &$x_1^2x_2^2 +y_1^2y_2^2$ &$E=(1,1)+(1, 1)$  \\
     &($\b =0, \a \neq 0,\pm \frac{1}{2}$) or &$yx^2+x^2y +\c y^3$ &\;$+\c x_1y_1x_2y_2$ &meeting at 2 pts, \\
     &($\b=2\a\pm1, \b \neq 0,\pm 1$) &($\c \neq 0, \pm 2$)&&$\t$ interchanges  \\
     &&&&two components.\\\hline
(6.3)&$\a \sfw_1+\b\sfw_2+\sfw_5+\sfw_6$ &$\a(xy^2+y^2x)+\b yxy +x^3$, &$\a\b x_1^2y_2^2 +\a\b y_1^2x_2^2$ &$E$ is a smooth curve. \\
     &($\a \neq 0, \b\neq 0,\pm 1,$ &$\a(yx^2+x^2y)+\b xyx +y^3$ &\;$+(\b^2-1)x_1y_1x_2y_2$ 
     & \\
     &$\b\neq 2\a\pm 1,$ &&\;\;$-\a x_1^2x_2^2 -\a y_1^2y_2^2$ & \\ 
     &$\b\neq -2\a\pm 1)$ &&& \\\hline
\end{tabular}
}
\vspace{2truemm}
\caption{Classification of Calabi-Yau superpotentials}
\label{tab1}
\end{table}
}

\begin{proof} 
Let $V$ be a 2-dimensional vector space with a basis $x, y$ and $\sfw\in \Im c$ a superpotential.  We divide the cases according to the classification of $\overline \sfw\in S(V)_4$ as in Lemma \ref{lem.quo}.  Note that if $(E, \t)$ is a geometric pair of $J(\sfw)$, then we can compute $\t\in \Aut E$ by the condition $\t((p_1, q_1), (p_2, q_2))=((p_2, q_2), (p_3, q_3))$ such that $((p_1, q_1), (p_2, q_2)), ((p_2, q_2), (p_3, q_3))\in E$ by \cite {ATV}.  

\medskip \noindent (Case 1) If $\overline {\sfw}=f_1= 0$, then $\sfw= \a\sfw_1 + \b\sfw_2$ for $\a, \b\in k$ such that $4\a+2\b=0$.
Since $\sfw \neq 0$, we have $\a\neq 0$, so we may assume that $\a=1$.
Since 
\begin{align*}
\partial _x\sfw  =xy^2+y^2x -2yxy, \quad 
\partial _y\sfw  =yx^2+x^2y -2xyx
\end{align*}
are linearly independent, $J(\sfw)$ is standard by Proposition \ref{prop.ATV}.
It is easy to see that ${\bf M}(\sfw)=\left(\begin{smallmatrix} y^2 &  xy-2yx \\ yx-2xy & x^2 \end{smallmatrix}\right)$. 
Moreover we can calculate $\cV(y^2 , xy-2yx, yx-2xy, x^2)=\emptyset$ in  $\PP^1\times \PP^1$.
Thus $J(\sfw)$ is 3-dimensional Calabi-Yau by Theorem \ref{thm.atv1}.
The point scheme $E$ of $J(\sfw)$ is given by 
\begin{align*}
 \cV(H(\sfw))&=\cV\left(\det \begin{pmatrix} y_1y_2 & x_1y_2-2y_1x_2 \\ y_1x_2-2x_1y_2 & x_1x_2 \end{pmatrix}\right) \\
&= \cV( (x_1y_2-y_1x_2)^2 ) \subset \PP^1\times \PP^1
\end{align*}
(a double curve of bidegree (1, 1)) by Theorem \ref{thm.atv1}. 

\medskip \noindent (Case 2) If $\overline {\sfw}=f_2=x^4$, then $\sfw= \a\sfw_1 + \b\sfw_2+ \sfw_5$ for $\a, \b\in k$ such that $4\a+2\b=0$. We have
\begin{align*}
\partial _x\sfw = \a(xy^2+y^2x -2yxy) +x^3, \quad
\partial _y\sfw = \a(yx^2+x^2y -2xyx).
\end{align*}

If $\a=0$, then
\begin{align*}
\partial _x\sfw  = x^3, \quad
\partial _y\sfw  = 0,
\end{align*}
so $J(\sfw)$ is not 3-dimensional Calabi-Yau by Theorem \ref{thm.atv1}.

If $\a\neq 0$, then we may assume that $\a= -\frac{1}{2}$ and so $\sfw= \sfw_1 -2\sfw_2 -2\sfw_5$. Since
\begin{align*}
\partial _x\sfw = xy^2+y^2x -2yxy -2x^3, \quad
\partial _y\sfw = yx^2+x^2y -2xyx.
\end{align*}
are linearly independent, $J(\sfw)$ is standard by Proposition \ref{prop.ATV}.
It is easy to see that ${\bf M}(\sfw)=\left(\begin{smallmatrix} y^2 -2x^2 &  xy-2yx \\ yx-2xy & x^2 \end{smallmatrix}\right)$. 
Moreover we can calculate $\cV(y^2-2x^2 , xy-2yx, yx-2xy, x^2)=\emptyset$ in  $\PP^1\times \PP^1$. Thus $J(\sfw)$ is 3-dimensional Calabi-Yau by Theorem \ref{thm.atv1}.
The point scheme $E$ of $J(\sfw)$ is given by 
\begin{align*}
\cV(H(\sfw))
&=\cV\left(\det \begin{pmatrix} y_1y_2 -2x_1x_2 & x_1y_2-2y_1x_2 \\ y_1x_2-2x_1y_2 & x_1x_2 \end{pmatrix}\right) \\
&= \cV( (x_1y_2-y_1x_2)^2 -x_1^2x_2^2 ) \\
&= \cV( x_1y_2-y_1x_2 -x_1x_2 ) \cup \cV( x_1y_2-y_1x_2 +x_1x_2 )  \subset \PP^1\times \PP^1
\end{align*}
(the union of two curves of bidegree (1, 1) meeting at one point) by Theorem \ref{thm.atv1}.
The automorphism $\t\in \Aut _kE$ is given by
\begin{align*}
&\t|_{\cV(x_1y_2 - y_1x_2 -x_1x_2)}((p_1, q_1), (p_1, p_1+q_1)) = ((p_1, p_1+q_1), (p_1, 2p_1+q_1)),\\
&\t|_{\cV(x_1y_2 - y_1x_2 +x_1x_2)}((p_1, q_1), (p_1, -p_1+q_1)) = ((p_1, -p_1+q_1), (p_1, -2p_1+q_1))
\end{align*}
($\t$ stabilizes two components).

\medskip \noindent (Case 3) If $\overline {\sfw}=4f_3=4x^3y$, then  $\sfw= \a\sfw_1 + \b\sfw_2+ \sfw_3$ for $\a, \b\in k$ such that $4\a+2\b=0$. Since
\begin{align*}
\partial _x\sfw  =\a(xy^2+y^2x -2yxy) + x^2y+xyx+yx^2, \quad
\partial _y\sfw  =\a(yx^2+x^2y -2xyx) + x^3
\end{align*}
are linearly independent, $J(\sfw)$ is standard by Proposition \ref{prop.ATV}.
It is easy to see that ${\bf M}(\sfw)=\left(\begin{smallmatrix} \a xy+yx+y^2 &  \a(xy-2yx)+x^2 \\ \a(yx-2xy)+x^2 & \a x^2 \end{smallmatrix}\right)$. 
By Theorem \ref{thm.atv1}, we can calculate that
\begin{align*}
&\textrm{ $J(\sfw)$ is 3-dimensional Calabi-Yau}\\
&\Longleftrightarrow \cV(\a xy+yx+y^2, \a(xy-2yx)+x^2, \a(yx-2xy)+x^2, \a x^2)=\emptyset \textrm{ in } \PP^1\times \PP^1\\
&\Longleftrightarrow \a\neq 0.  
\end{align*}
Now assume that $\a\neq 0$. Then we may assume that $\a =1$.
The point scheme $E$ of $J(\sfw)$ is given by 
\begin{align*}
\cV(H(\sfw))=&\cV\left(\det \begin{pmatrix} x_1y_2+y_1x_2+y_1y_2 &  x_1y_2-2y_1x_2+x_1x_2 \\ y_1x_2-2x_1y_2+x_1x_2 & x_1x_2 \end{pmatrix}\right) \\
= & \cV( 2(x_1y_2 -y_1x_2)^2 +2(y_1x_2 +x_1y_2)x_1x_2 -x_1^2x_2^2 )    \subset \PP^1\times \PP^1
\end{align*}
(an irreducible curve of bidegree (2, 2) with a cusp) by Theorem \ref{thm.atv1}.

\medskip \noindent (Case 4) If $\overline {\sfw}=f_4=x^2y^2$, then $\sfw=\a \sfw_1+\b\sfw_2$ for $\a, \b\in k$ such that $4\a+2\b=1$.  Since 
\begin{align*}
\partial _x\sfw =\a(xy^2+y^2x)+\b yxy, \quad
\partial _y\sfw =\a(yx^2+x^2y)+\b xyx
\end{align*}
are linearly independent, $J(\sfw)$ is standard by Proposition \ref{prop.ATV}.
It is easy to see that ${\bf M}(\sfw)=\left(\begin{smallmatrix} \a y^2 & \a xy+\b yx \\ \a yx+\b xy & \a x^2 \end{smallmatrix}\right)$. 
By Theorem \ref{thm.atv1}, we can calculate that 
\begin{align*}
&\textrm{ $J(\sfw)$ is 3-dimensional Calabi-Yau}\\
&\Longleftrightarrow \cV(\a y^2, \a xy+\b yx, \a yx+\b xy, \a x^2)=\emptyset \textrm{ in } \PP^1\times \PP^1 \\
&\Longleftrightarrow \a\neq 0.  
\end{align*}
Now assume that $\a\neq 0$. Then the point scheme $E$ of $J(\sfw)$ is 
\begin{align*}
\cV(H(\sfw))=&\cV\left(\det \begin{pmatrix} \a y_1y_2 & \a x_1y_2+\b y_1x_2 \\ \a y_1x_2+\b x_1y_2 & \a x_1x_2 \end{pmatrix}\right) \\
= & \cV(\b^2 x_1y_1x_2y_2+\a\b x_1^2y_2^2+ \a\b y_1^2x_2^2)\subset \PP^1\times \PP^1
\end{align*}
by Theorem \ref{thm.atv1}.

(4.1) If $\b=0$ (in this case $\a=\frac{1}{4}$), then $E=\PP^1\times \PP^1$.

(4.2) If $\b\neq 0, \b^2-4\a^2=0$ (in this case $\b=2\a$, so $\a=\frac{1}{8}, \b=\frac{1}{4}$), then 
$E= \cV((x_1y_2 + y_1x_2)^2)$ 
(a double curve of bidegree (1, 1)).

(4.3) If $\b\neq 0, \b^2-4\a^2\neq0$ (that is, $\b\neq 0, \b \neq \pm 2\a$), then 
$E = \cV(x_1y_2 - \g_1 y_1x_2) \cup \cV(x_1y_2 - \g_2 y_1x_2)$
(the union of two curves of bidegree (1, 1) meeting at two points) where $\g_1 = \frac{-\b+\sqrt{\b^2-4\a^2}}{2\a}, \g_2 = \frac{-\b-\sqrt{\b^2-4\a^2}}{2\a}$,
and the automorphism $\t\in \Aut _kE$ is given by 
\begin{align*}
&\t|_{\cV(x_1y_2 - \g_1 y_1x_2)}((p_1,q_1), (p_1,\g_1 q_1)) = ((p_1,\g_1 q_1),(p_1, \g_1^2q_1)),\\
&\t|_{\cV(x_1y_2 - \g_2 y_1x_2)}((p_1,q_1), (p_1,\g_2 q_1)) = ((p_1,\g_2 q_1),(p_1, \g_2^2q_1))
\end{align*}
($\t$ stabilizes the components).

\medskip \noindent (Case 5) If $\overline {\sfw}=f_5=x^4+x^2y^2$, then $\sfw=\a \sfw_1+\b\sfw_2+\sfw_5$ for $\a, \b\in k$ such that $4\a+2\b=1$.
Since 
\begin{align*}
\partial _x\sfw =\a(xy^2+y^2x)+\b yxy +x^3, \quad
\partial _y\sfw =\a(yx^2+x^2y)+\b xyx
\end{align*}
are linearly independent, $J(\sfw)$ is standard by Proposition \ref{prop.ATV}.
It is easy to see that ${\bf M}(\sfw)=\left(\begin{smallmatrix} \a y^2+x^2 & \a xy+\b yx \\ \a yx+\b xy & \a x^2 \end{smallmatrix}\right)$. 
By Theorem \ref{thm.atv1}, we can calculate that 
\begin{align*}
&\textrm{ $J(\sfw)$ is 3-dimensional Calabi-Yau}\\
&\Longleftrightarrow \cV(\a y^2+x^2, \a xy+\b yx, \a yx+\b xy, \a x^2)=\emptyset \textrm{ in } \PP^1\times \PP^1\\
&\Longleftrightarrow \a\neq 0.  
\end{align*} 
Now assume that $\a\neq 0$. Then the point scheme $E$ of $J(\sfw)$ is 
\begin{align*}
\cV(H(\sfw))=&\cV\left(\det \begin{pmatrix} \a y_1y_2 +x_1x_2 & \a x_1y_2+\b y_1x_2 \\ \a y_1x_2+\b x_1y_2 & \a x_1x_2 \end{pmatrix}\right) \\
= & \cV(\b^2x_1y_1x_2y_2+\a\b x_1^2y_2^2 +\a\b y_1^2x_2^2 -\a x_1^2x_2^2)\subset \PP^1\times \PP^1
\end{align*}
by Theorem \ref{thm.atv1}.

(5.1) If $\b=0$ (in this case $\a=\frac{1}{4}$), then $E= \cV(x_1^2) \cup \cV(x_2^2)$
(the union of a double curve of bidegree (1, 0) and a double curve of bidegree (0, 1)),
and the automorphism $\t\in \Aut _kE_{\red}$ is given by
\begin{align*}
\t|_{\cV(x_1^2)}((0,1), (p_2,q_2)) = ((p_2,q_2), (0,1)),\\
\t|_{\cV(x_2^2)}((p_1,q_1), (0,1)) = ((0,1), (p_1,-q_1))
\end{align*}
($\t$ interchanges two components).

(5.2) If $\b\neq 0$, $\b^2-4\a^2 = 0$  (in this case $\b=2\a$, so $\a=\frac{1}{8}, \b=\frac{1}{4}$),
then 
$$E = \cV(\frac{1}{2}(x_1y_2 + y_1x_2) -x_1x_2) \cup \cV(\frac{1}{2}(x_1y_2 + y_1x_2) +x_1x_2)$$
(the union of two curves of bidegree (1, 1) meeting at one point),
and the automorphism $\t\in \Aut _kE$ is given by
\begin{align*}
&\t|_{\cV(\frac{1}{2}(x_1y_2 + y_1x_2) -x_1x_2)}((p_1, q_1), (p_1, 2p_1-q_1)) = ((p_1, 2p_1-q_1), (p_1, -4p_1+q_1)),\\
&\t|_{\cV(\frac{1}{2}(x_1y_2 + y_1x_2) +x_1x_2)}((p_1, q_1), (p_1, -2p_1-q_1)) = ((p_1, -2p_1-q_1), (p_1, 4p_1+q_1))
\end{align*}
($\t$ interchanges two components). 

(5.3) If $\b\neq 0$, $\b^2-4\a^2 \neq 0$ (that is, $\b\neq 0, \b \neq \pm 2\a$), then 
$$E = \cV(\b(x_1y_2 - \g_1 y_1x_2)(x_1y_2 - \g_2 y_1x_2) - \a x_1^2x_2^2)$$ 
(an irreducible curve of bidegree (2, 2) with a biflecnode)
where $\g_1 = \frac{-\b+\sqrt{\b^2-4\a^2}}{2\a}, \g_2 = \frac{-\b-\sqrt{\b^2-4\a^2}}{2\a}$.

\medskip \noindent (Case 6) If $\overline {\sfw}=g_\l=x^4+y^4+\l x^2y^2$, then $\sfw=\a \sfw_1+\b\sfw_2+\sfw_5+\sfw_6$ for $\a, \b\in k$ such that $4\a+2\b=\l (\neq \pm 2)$.
Since 
\begin{align*}
\partial _x\sfw =\a(xy^2+y^2x)+\b yxy +x^3, \quad
\partial _y\sfw =\a(yx^2+x^2y)+\b xyx +y^3
\end{align*}
are linearly independent, $J(\sfw)$ is standard by Proposition \ref{prop.ATV}.
It is easy to see that ${\bf M}(\sfw)=\left(\begin{smallmatrix} \a y^2+x^2 & \a xy+\b yx \\ \a yx+\b xy & \a x^2 +y^2 \end{smallmatrix}\right)$. 
By Theorem \ref{thm.atv1}, we can calculate that 
\begin{align*}
&\textrm{ $J(\sfw)$ is 3-dimensional Calabi-Yau}\\
&\Longleftrightarrow \cV(\a y^2+x^2, \a xy+\b yx, \a yx+\b xy, \a x^2+y^2)=\emptyset \textrm{ in } \PP^1\times \PP^1 \\
&\Longleftrightarrow \cV\left(
\begin{pmatrix} \a &\b \\ \b &\a \end{pmatrix}\begin{pmatrix} xy \\ yx \end{pmatrix},
\begin{pmatrix} 1 &\a \\ \a &1 \end{pmatrix}\begin{pmatrix} x^2 \\ y^2 \end{pmatrix}
\right) \textrm{ in } \PP^1\times \PP^1 \\
&\Longleftrightarrow \a^2-\b^2 \neq 0,\; \a(\a^2-1) \neq 0, \textrm{ or } \b(\a^2-1) \neq 0\\
&\Longleftrightarrow(\a,\b) \neq (0,0),(\pm 1,\pm 1) \qquad (\textrm{since } \l \neq \pm 2, (\a,\b) \textrm{ can not be } (\pm 1, \mp 1)).
\end{align*}
Now assume that $(\a,\b) \neq (0,0),(\pm 1,\pm 1)$. Then the point scheme $E$ of $J(\sfw)$ is 
\begin{align*}
\cV(H(\sfw))=&\cV\left(\det \begin{pmatrix} \a y_1y_2 +x_1x_2 & \a x_1y_2+\b y_1x_2 \\ \a y_1x_2+\b x_1y_2 & \a x_1x_2 +y_1y_2 \end{pmatrix}\right) \\
= & \cV((\b^2-1)x_1y_1x_2y_2+\a\b x_1^2y_2^2 +\a\b y_1^2x_2^2 -\a x_1^2x_2^2 -\a y_1^2y_2^2)\subset \PP^1\times \PP^1
\end{align*}
by Theorem \ref{thm.atv1}

If $\a=0, \b^2-1=0$, then $\l = 4\a+2\b=\pm2$, so this case is excluded.

(6.1) If $\a=0, \b^2-1\neq 0$, then 
$E= \cV(x_1)\cup\cV(y_1)\cup\cV(x_2)\cup\cV(y_2)$
(the union of two curves of bidegree (1, 0) and two curves of bidegree (0, 1)),
and the automorphism $\t\in \Aut _kE$ is given by
\begin{align*}
&\t|_{\cV(x_1)}((0,1), (p_2,q_2)) = ((p_2,q_2), (1,0)),\\
&\t|_{\cV(y_1)}((1,0), (p_2,q_2)) = ((p_2,q_2), (0,1)),\\
&\t|_{\cV(x_2)}((p_1,q_1), (0,1)) = ((0,1), (q_1,-p_1)),\\
&\t|_{\cV(y_2)}((p_1,q_1), (1,0)) = ((1,0), (q_1,-p_1))
\end{align*}
($\t$ circulates four components). 

If $\a\neq0, \b=0, 4\a^2-1=0$, then $\l = 4\a+2\b=\pm2$, so this case is excluded.

(6.2) If $\a\neq0, \b=0, 4\a^2-1 \neq 0$, then 
$E= \cV(x_1x_2- \g_1 y_1y_2)\cup \cV(x_1x_2- \g_2 y_1y_2)$
(the union of two curves of bidegree (1, 1) meeting at two points)
where $\g_1 = \frac{-1+\sqrt{1-4\a^2}}{2\a}, \g_2 = \frac{-1-\sqrt{1-4\a^2}}{2\a}$,
and the automorphism $\t\in \Aut _kE$ is given by
\begin{align*}
&\t|_{\cV(x_1x_2 - \g_1 y_1y_2)}((p_1,q_1), (\g_1 q_1, p_1)) = ((\g_1 q_1, p_1),(\g_2 p_1, \g_1 q_1)),\\
&\t|_{\cV(x_1x_2 - \g_2 y_1y_2)}((p_1,q_1), (\g_2 q_1, p_1)) = ((\g_1 q_1, p_1),(\g_1 p_1, \g_2 q_1))
\end{align*}
($\t$ interchanges two components).

We now consider the case $\a\neq 0, \b^2-1=0$.
If $\b=-1$, then $J(\sfw)$ is isomorphic to an algebra in the case $\b=1$,
so we may assume that $\b=1$ (in this case $\a \neq \pm1$).  Then 
$J(\sfw)$ is $k\<x,y\>(\a(xy^2+y^2x)+ yxy +x^3, \a(yx^2+x^2y)+ xyx +y^3)$.
The homomorphism defined by $x \mapsto x-y, y \mapsto x+y$ induces the isomorphism 
\[ J(\sfw) \xrightarrow{\sim} k\<x,y\>/(\frac{1-\a}{1+\a}yxy+x^3, \frac{1-\a}{1+\a}xyx+y^3 ). \]
We can check that $\frac{1-\a}{1+\a} \neq 0, \pm 1$, so this case is integrated with the case (6.1).

We next consider the case $\a \neq 0, \b \neq 0, \b^2-1\neq0, 2\a-\b= \pm 1$.
If $2\a-\b=-1$, then $J(\sfw)$ is isomorphic to an algebra in the case $2\a-\b=1$,
so we may assume that $2\a-\b=1$.
Then
$J(\sfw)$ is $k\<x,y\>(\a(xy^2+y^2x)+ (2\a-1)yxy +x^3, \a(yx^2+x^2y)+ (2\a-1)xyx +y^3)$.
The homomorphism defined by $x \mapsto x-y, y \mapsto x+y$ induces the isomorphism 
\[ J(\sfw) \xrightarrow{\sim} k\<x,y\>/(\frac{1-\a}{2\a}(xy^2+y^2x)+x^3, \frac{1-\a}{2\a}(xy^2+y^2x)+y^3 ). \]
We can check that $\frac{1-\a}{2\a} \neq 0, \pm \frac{1}{2}$, so this case is integrated with the case (6.2).

(6.3) If $\a \neq 0, \b \neq 0, \b^2-1\neq0, 2\a-\b\neq \pm 1$, then we can calculate that $E$ is a smooth curve of bidegree (2, 2)
by using the Jacobian criterion.
\end{proof}

\begin{theorem} \label{thm.deg}
Let $V$ be a 2-dimensional vector space and $0\neq \sfw\in V^{\otimes 4}$.   If $J(\sfw)$ is not 3-dimensional Calabi-Yau, then $J(\sfw)$ is isomorphic to one of the following five algebras: 
\begin{enumerate}
\item{} $k\<x, y\>/(x^3)$;
\item{} $k\<x, y\>/(x^3, x^2y+xyx+yx^2)$;
\item{} $k\<x, y\>/(yxy, xyx)$;
\item{} $k\<x, y\>/(yxy+x^3, xyx)$; 
\item{} $k\<x, y\>/(x^3, y^3)$. 
\end{enumerate}
\end{theorem}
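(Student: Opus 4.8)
The plan is to rerun the six-case analysis in the proof of Theorem~\ref{thm.csp}, but now keeping track of the parameter values at which $J(\sfw)$ \emph{fails} to be Calabi-Yau and identifying the resulting finitely many algebras. First I would reduce to a superpotential in normal form. Since $J(\sfw)=\cD(c(\sfw))$ depends only on $c(\sfw)\in\Im c$, I take $0\neq\sfw\in\Im c$ as in Theorem~\ref{thm.csp}. For any $\sigma\in\GL(V)$ one has $J(\sfw)\cong J(\sigma(\sfw))$: indeed $\sigma(\sfw)=\sigma(c(\sfw))=c(\sigma(\sfw))$ by Lemma~\ref{lem.tso}, so this isomorphism follows from Lemma~\ref{lem.sww'} alone and, unlike the argument quoted just before Theorem~\ref{thm.csp}, does not require the AS-regularity underlying Theorem~\ref{thm.ws}. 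Hence, using Lemma~\ref{lem.qq2} and Lemma~\ref{lem.quo} to put $\overline{\sfw}$ into one of the six normal forms, the opening reduction of the proof of Theorem~\ref{thm.csp} is valid here as well, and it suffices to inspect, in each of Cases~1--6, precisely the parameters for which the Calabi-Yau criterion of Theorem~\ref{thm.atv1} (as computed in Theorem~\ref{thm.csp}) is \emph{not} met.

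Reading those computations off, the non-Calabi-Yau locus consists of: nothing in Case~1; $\alpha=0$ in each of Cases~2--5; and $(\alpha,\beta)\in\{(0,0),(1,1),(-1,-1)\}$ in Case~6 (the points $(\pm1,\mp1)$ being excluded by $\lambda\neq\pm2$). For Cases~2, 3, 4 I would simply set $\alpha=0$ and compute the two partials $\partial_x\sfw,\partial_y\sfw$, which are exactly the defining relations because $c(\sfw)=\sfw$ here. This gives $\sfw=\sfw_5=x^4$ with $J(\sfw)=k\langle x,y\rangle/(x^3)$, i.e.\ algebra~(1); $\sfw=\sfw_3$ with $J(\sfw)=k\langle x,y\rangle/(x^2y+xyx+yx^2,\,x^3)$, i.e.\ algebra~(2); and $\sfw=\tfrac12\sfw_2$ with $J(\sfw)=k\langle x,y\rangle/(yxy,\,xyx)$, i.e.\ algebra~(3).

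The last two cases require a change of coordinates. In Case~5 with $\alpha=0$ (so $\beta=\tfrac12$) one finds $J(\sfw)=k\langle x,y\rangle/(yxy+2x^3,\,xyx)$, and the diagonal rescaling $x\mapsto x,\ y\mapsto\sqrt2\,y$ normalizes the coefficient so as to identify it with algebra~(4). In Case~6 I must show all three bad parameter points yield algebra~(5): for $(0,0)$ one has $\sfw=x^4+y^4$, giving $k\langle x,y\rangle/(x^3,y^3)$ outright; for $(1,1)$ the very substitution $x\mapsto x-y,\ y\mapsto x+y$ used in the proof of Theorem~\ref{thm.csp} sends the relations to $\tfrac{1-\alpha}{1+\alpha}yxy+x^3$ and $\tfrac{1-\alpha}{1+\alpha}xyx+y^3$, which at $\alpha=1$ collapse to $x^3,y^3$; and $(-1,-1)$ is carried to $(1,1)$ by $x\mapsto x,\ y\mapsto\sqrt{-1}\,y$, which fixes $\sfw_5,\sfw_6$ and negates $\sfw_1,\sfw_2$, hence sends $(\alpha,\beta)$ to $(-\alpha,-\beta)$.

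I expect the difficulty here to be bookkeeping rather than conceptual. The one point that genuinely needs care is the reduction step: the justification given before Theorem~\ref{thm.csp} invokes Theorem~\ref{thm.ws}, which presupposes that $J(\sfw)$ is AS-regular, so for the non-Calabi-Yau superpotentials I must instead base the reduction on Lemma~\ref{lem.sww'} as above. Beyond that, the only new computations are the explicit partials $\partial_x\sfw,\partial_y\sfw$ at the degenerate parameter values together with the two coordinate changes in Cases~5 and~6; since each such change is a specialization of one already verified in Theorem~\ref{thm.csp}, no fresh hard calculation is involved, and exhaustiveness is automatic because Cases~1--6 cover every $\overline{\sfw}\in S(V)_4$ and Theorem~\ref{thm.csp} pinned down the full Calabi-Yau locus inside each.
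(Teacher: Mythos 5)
Your proof is correct and is essentially the paper's own argument: the paper likewise proves Theorem \ref{thm.deg} by reading the degenerate parameter values off the six-case analysis in the proof of Theorem \ref{thm.csp} (nothing in Case 1, $\alpha=0$ in Cases 2--5, and $(\alpha,\beta)=(0,0),(1,1),(-1,-1)$ in Case 6) and identifying the resulting five algebras. The only differences are cosmetic---where you specialize the coordinate change $x\mapsto x-y$, $y\mapsto x+y$ to $\alpha=1$ and pull $(-1,-1)$ back to $(1,1)$ by $y\mapsto\sqrt{-1}\,y$, the paper instead identifies the Case 6 algebras with $k\langle x,y\rangle/(x^3,y^3)$ via the identity $(x+ay)^3=a^2(xy^2+y^2x+yxy)+x^3+a(yx^2+x^2y+xyx)+a^3y^3$, so that the relation ideals become $((x+y)^3,(x-y)^3)$ and $((x+\sqrt{-1}y)^3,(x-\sqrt{-1}y)^3)$---and your observation that the normal-form reduction for non-Calabi-Yau potentials must rest on Lemma \ref{lem.sww'} (together with Lemma \ref{lem.tso}) rather than on Theorem \ref{thm.ws}, whose hypotheses fail here, is a valid point of rigor that the paper's terser proof leaves implicit.
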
 

\begin{proof} In each (Case 2), (Case 3), (Case 4), (Case 5) in the proof of Theorem \ref{thm.csp}, non-Calabi-Yau algebras are given by (1), (2), (3), (4) as in the above theorem. 
In (Case 6), non-Calabi-Yau algebras are isomorphic to either $k\<x, y\>/(x^3, y^3)$ or $k\<x, y\>/(xy^2+y^2x+yxy\pm x^3, yx^2+x^2y+xyx\pm y^3)$ by the proof of Theorem \ref{thm.csp}.  For $a\in k$, 
$$(x+ay)^3=a^2(xy^2+y^2x+yxy)+x^3+a(yx^2+x^2y+xyx)+a^3y^3,$$
so we have
\begin{align*}
&  k\<x, y\>/(xy^2+y^2x+yxy+x^3, yx^2+x^2y+xyx+y^3)=k\<x, y\>/((x+y)^3, (x-y)^3), \\
&  k\<x, y\>/(xy^2+y^2x+yxy-x^3, yx^2+x^2y+xyx-y^3)=k\<x, y\>/((x+\sqrt {-1}y)^3, (x-\sqrt {-1}y)^3),
\end{align*}
hence the result. 
\end{proof} 

\begin{corollary} \label {cor.dom} 
Let $V$ be a 2-dimensional vector space and $0\neq \sfw\in V^{\otimes 4}$.  Then $J(\sfw)$ is 3-dimensional Calabi-Yau if and only if it is a domain.  
\end{corollary}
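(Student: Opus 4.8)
The plan is to establish the two implications of the biconditional separately; the forward direction is immediate from general structure theory, while the reverse direction is handled by the explicit list in Theorem \ref{thm.deg}.

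For the implication that Calabi-Yau forces domain, note first that $J(\sfw)=T(V)/(\partial_x c(\sfw),\partial_y c(\sfw))$ has all its defining relations in degree $3$, so it is a connected graded algebra generated in degree $1$. If $J(\sfw)$ is $3$-dimensional Calabi-Yau, then by \cite[Lemma 1.2]{RRZ} it is $3$-dimensional AS-regular. I would then invoke the theorem of Artin, Tate and Van den Bergh \cite{ATV} that every such $3$-dimensional AS-regular algebra generated in degree $1$ is a noetherian domain; this gives the conclusion with no further work.

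For the converse I would argue by contraposition and lean entirely on Theorem \ref{thm.deg}: if $J(\sfw)$ is not $3$-dimensional Calabi-Yau, then $J(\sfw)$ is isomorphic to one of the five algebras listed there, so it suffices to produce a pair of nonzero zero-divisors in each. The crucial preliminary remark is that in each of these algebras the defining relations are homogeneous of degree $3$; hence the degree $1$ and degree $2$ graded pieces coincide with $V$ and $V^{\otimes 2}$, and in particular $x$, $y$, $xy$, $yx$ are all nonzero. With this in hand the zero-divisors are immediate: in $k\<x,y\>/(x^3)$, in $k\<x,y\>/(x^3,x^2y+xyx+yx^2)$, and in $k\<x,y\>/(x^3,y^3)$ one uses $x\cdot x^2=x^3=0$; in $k\<x,y\>/(yxy,xyx)$ one uses $(yx)\cdot y=yxy=0$; and in $k\<x,y\>/(yxy+x^3,xyx)$ one uses $(xy)\cdot x=xyx=0$. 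Thus none of the five is a domain, which is exactly the contrapositive of the desired implication.

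The only genuine content is bookkeeping, and the single point that needs care is the nonvanishing of the degree $\le 2$ monomials $x,y,xy,yx$ in each quotient; this is why I isolate the degree-$3$ homogeneity of the relations as the first step of the converse. The forward direction carries no real obstacle once the cited results of \cite{RRZ} and \cite{ATV} are in place, so the whole proof reduces to reading off Theorem \ref{thm.deg} and checking five easy zero-divisor relations.
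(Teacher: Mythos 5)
Your proof is correct and takes essentially the same route as the paper: the forward direction via Calabi-Yau $\Rightarrow$ AS-regular $\Rightarrow$ domain (the paper cites \cite[Theorem 3.9]{ATV2}, the 1991 Inventiones paper, which is the right reference for the domain theorem rather than \cite{ATV}), and the converse by contraposition from the five algebras of Theorem \ref{thm.deg}. The paper leaves the zero-divisor verification for those five algebras implicit, and your explicit check---including the observation that the relations live in degree $3$ so all monomials of degree $\le 2$ are nonzero in each quotient---is exactly the omitted step.
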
  

\begin{proof} If $J(\sfw)$ is 3-dimensional Calabi-Yau, then it is 3-dimensional noetherian AS-regular, so it is a domain by \cite[Theorem 3.9]{ATV2}.  On the other hand, if $J(\sfw)$ is not 3-dimensional Calabi-Yau, then it is not a domain by the classification in Theorem \ref{thm.deg}.
\end{proof}  

\section{Homological Determinants}

The homological determinant plays an important role in invariant theory for AS-regular algebras (\cite{JZ}, \cite{KKZ1}, \cite{KKZ2}, \cite{CKWZ}, \cite{MU} etc.).  For a 3-dimensional noetherian quadratic Calabi-Yau algebra $S=J(\sfw)=T(V)/(R)$, it was shown in \cite [Theorem 7.2]{MS2} that $\hdet \s=\det \s|_V$ for every $\s\in \GrAut S$ if and only if $c(\sfw)\not \in \Sym ^3V$.  In this section, we will compute homological determinants for 3-dimensional noetherian cubic Calabi-Yau algebras. 

\begin{lemma} \label{lem.mhdd}
Let $n=\dim V$, $m\in \NN^+$, 
and $\s\in \GL(V)$.  
For $\sfw\in (\Alt^nV)^{\otimes m}$, $\s(\sfw)=(\det \s|_V)^m\sfw$.
\end{lemma}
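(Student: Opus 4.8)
The plan is to reduce the statement to the one-dimensionality of $\Alt^nV$ together with the Leibniz expansion of the determinant. Since $\dim V=n$, the space $\Alt^nV\subset V^{\otimes n}$ is one-dimensional: fixing a basis $x_1,\dots,x_n$ of $V$, it is spanned by
$$\omega:=a(x_1\otimes\cdots\otimes x_n)=\frac{1}{n!}\sum_{\theta\in\mathfrak S_n}(\sgn\theta)\,x_{\theta(1)}\otimes\cdots\otimes x_{\theta(n)}.$$
Hence $(\Alt^nV)^{\otimes m}$ is the one-dimensional subspace of $V^{\otimes nm}$ spanned by $\omega^{\otimes m}$, and by linearity it suffices to check the asserted identity for $\sfw=\omega^{\otimes m}$.

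First I would treat the base case $m=1$, i.e. $\s^{\otimes n}(\omega)=(\det\s|_V)\,\omega$. By Lemma \ref{lem.tso}(1) the operator $\s^{\otimes n}$ commutes with the $\mathfrak S_n$-action, so $\theta(\s^{\otimes n}(\omega))=\s^{\otimes n}(\theta(\omega))=(\sgn\theta)\,\s^{\otimes n}(\omega)$ for every $\theta$; thus $\s^{\otimes n}(\omega)\in\Alt^nV=k\omega$ and $\s^{\otimes n}(\omega)=\lambda\omega$ for a unique scalar $\lambda$. To pin down $\lambda$ I would write $\s(x_j)=\sum_i s_{ij}x_i$, so that $(s_{ij})$ is the matrix of $\s|_V$, and compare the coefficient of $x_1\otimes\cdots\otimes x_n$ on both sides. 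On the left this coefficient is $\frac{1}{n!}\sum_{\theta}(\sgn\theta)s_{1,\theta(1)}\cdots s_{n,\theta(n)}=\frac{1}{n!}\det\s|_V$, while on $\omega$ it is $\frac{1}{n!}$ (coming from $\theta=\id$); hence $\lambda=\det\s|_V$.

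The general case then follows with no extra work. Under the identification $V^{\otimes nm}=(V^{\otimes n})^{\otimes m}$ the action of $\s$ on $V^{\otimes nm}$ is $\s^{\otimes nm}=(\s^{\otimes n})^{\otimes m}$, so
$$\s(\omega^{\otimes m})=\s^{\otimes nm}(\omega^{\otimes m})=\bigl(\s^{\otimes n}(\omega)\bigr)^{\otimes m}=\bigl((\det\s|_V)\,\omega\bigr)^{\otimes m}=(\det\s|_V)^m\,\omega^{\otimes m}.$$
Extending by linearity over $(\Alt^nV)^{\otimes m}=k\,\omega^{\otimes m}$ completes the argument.

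I do not expect a genuine obstacle here; the only point that requires care is the identification of the scalar $\lambda$ with $\det\s|_V$ in the base case, which is precisely the Leibniz formula for the determinant. The remaining steps are routine bookkeeping in $V^{\otimes nm}$, relying on the one-dimensionality of $\Alt^nV$ when $\dim V=n$ and on the commutation relation of Lemma \ref{lem.tso}(1).
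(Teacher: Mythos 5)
Your proof is correct, and it shares the paper's overall architecture (settle the case $m=1$, then tensor up and extend by linearity), but it handles the crucial base case by a genuinely different route. The paper identifies the scalar by which $\s$ acts on $\sfw_0=a(x_1\otimes\cdots\otimes x_n)$ via homological machinery: since $\cD(\sfw_0,n-2)=k[x_1,\dots,x_n]$, the automorphism $\s$ rescales the superpotential of the polynomial algebra by $\hdet\s$ (by \cite[Theorem 3.3]{MS2}), and it is known that $\hdet\s=\det\s|_V$ for polynomial algebras. You instead prove the classical statement directly: Lemma \ref{lem.tso}(1) shows that $\s^{\otimes n}(\omega)$ is again alternating, hence equals $\lambda\omega$ for a unique scalar $\lambda$ by one-dimensionality of $\Alt^nV$, and comparing the coefficient of $x_1\otimes\cdots\otimes x_n$ via the Leibniz expansion gives $\lambda=\det\s|_V$. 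Your route is more elementary and self-contained (it is exactly the standard fact that $\GL(V)$ acts on the top exterior power by the determinant), whereas the paper's argument is shorter given the machinery already imported from \cite{MS2} and has the side benefit of making the link with the homological determinant explicit, which is the theme of the section in which the lemma is used. A minor further difference: for $m>1$ the paper writes a general element as a sum of tensors $\sfv_1\otimes\cdots\otimes\sfv_m$ with $\sfv_i\in\Alt^nV$ and extends by linearity, while you observe that $(\Alt^nV)^{\otimes m}$ is itself one-dimensional, spanned by $\omega^{\otimes m}$; both are fine, yours being marginally cleaner. The only step you assert rather than prove is the one-dimensionality of $\Alt^nV$ (the paper includes the short repeated-index argument), but this is standard and harmless.
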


\begin{proof} Fix a basis $x_1, \dots, x_n$ for $V$.  First, we show it for $m=1$.  Let $\sfw=x_{i_1}\otimes \cdots \otimes x_{i_n}\in V^{\otimes n}$ be a monomial.  Since $a(\theta (\sfw))=\sgn (\theta )a(\sfw)$ for $\theta\in S_n$, if $x_{i_s}=x_{i_t}$ for some $s\neq t$, then $a(\sfw)=0$, and if $i_s\neq i_t$ for every $s\neq t$, then $a(\sfw)=\pm  a(x_1\otimes \cdots \otimes x_n)$, so $\Alt^nV=\Im a$ is a one dimensional vector space spanned by $\sfw_0:=a(x_1\otimes \cdots \otimes x_n)$.  Since $S=\cD(\sfw_0, n-2)=k[x_1, \dots, x_n]$ is the polynomial algebra, $\s$ extends to a graded algebra automorphism of $S$, so $\s(\sfw_0)=(\hdet \s)\sfw_0$ by \cite [Theorem 3.3]{MS2}, but for a polynomial algebra, it is known that $\hdet \s=\det \s|_V$, hence $\s(\sfw_0)=(\det \s|_V)\sfw_0$.  

For $m>1$, if $\sfw=\sfv_1\otimes \cdots \otimes \sfv_m\in (\Alt^nV)^{\otimes m}$ where $\sfv_i\in \Alt^nV$, then 
$$\s(\sfw)=\s(\sfv_1)\otimes \cdots \otimes \s(\sfv_m)=(\det \s|_V)\sfv_1\otimes \cdots \otimes (\det \s|_V)\sfv_m=(\det \s|_V)^m\sfw.$$  
The result follows by linearity.
\end{proof} 

Let $V$ be a 2-dimensional vector space.  
Since $V^{\otimes 2}=\Sym^2V\oplus \Alt^2V$, 
\begin{align*}
V^{\otimes 4} & =(\Sym^2V\oplus \Alt^2V)\otimes (\Sym^2V\oplus \Alt^2V) \\
& =(\Sym^2V)^{\otimes 2}\oplus (\Sym^2V\otimes \Alt^2V)\oplus (\Alt^2V\otimes \Sym^2V)\oplus (\Alt^2V)^{\otimes 2}.
\end{align*} 
Let $\pi :V^{\otimes 4}\to (\Alt^2V)^{\otimes 2}$ be the projection map with respect to the above decomposition.     
Fix a basis $x, y$ for $V$.  Since $\Sym^2V=kx^2+k(xy+yx)+ky^2$ and $\Alt ^2V=k(xy-yx)$, we have
\begin{align*}
& (\Sym^2V)^{\otimes 2}=kx^4+ky^4+kx^2y^2+ky^2x^2+k(x^3y+x^2yx)+k(xyx^2+yx^3) \\
& \hspace{1in} +k(y^2xy+y^3x)+k(xy^3+yxy^2)+k(xyxy+xy^2x+yx^2y+yxyx),\\
& \Sym^2V\otimes \Alt^2V=k(x^3y-x^2yx)+k(y^2xy-y^3x)+k(xyxy-xy^2x+yx^2y-yxyx), \\ 
& \Alt^2V\otimes \Sym^2V=k(xyx^2-yx^3)+k(xy^3-yxy^2)+k(xyxy+xy^2x-yx^2y-yxyx), \\ 
& (\Alt^2V)^{\otimes 2}=k(xyxy-xy^2x-yx^2y+yxyx).
\end{align*}
Let $\sfw_0=xyxy-xy^2x-yx^2y+yxyx$ so that $(\Alt^2V)^{\otimes 2}=k\sfw_0$, and define a map $\mu :V^{\otimes 4}\to k$ by $\pi (\sfw)=\mu (\sfw)\sfw_0$. 

\begin{remark} Although the map $\mu$ depends on the choice of a basis for $V$, the map $\pi$ is independent of the choice of a basis for $V$.  Since $\mu (\sfw)=0$ if and only if $\pi (\sfw)=0$, whether $\mu (\sfw)=0$ or not is independent of the choice of a basis for $V$.  
\end{remark} 

\begin{theorem} \label{thm.hdd} 
Let $S=T(V)/(R)$ be a 3-dimensional noetherian cubic AS-regular algebra where $R\subset V^{\otimes 3}$.
If $\mu (\sfw_S)\neq 0$, then $\hdet \s=(\det \s|_V)^2$ for every $\s\in \GrAut S$. 
\end{theorem}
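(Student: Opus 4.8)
The plan is to reduce the entire statement to a single identity together with one equivariance observation. The identity is
$$\s(\sfw_S)=(\hdet \s)\,\sfw_S \qquad (\s\in \GrAut S),$$
where $\s(\sfw_S)$ abbreviates $\s^{\otimes 4}(\sfw_S)$. First I would record, via \cite[Proposition 2.12]{MS2}, that we may write $S=\cD(\sfw_S)$ for a twisted superpotential $\sfw_S\in V^{\otimes 4}$, unique up to a non-zero scalar. The displayed identity is then exactly \cite[Theorem 3.3]{MS2}, invoked in precisely the same way as in the proof of Lemma \ref{lem.mhdd}. Because $\mu$ is linear and this identity is homogeneous of degree one in $\sfw_S$, both the hypothesis $\mu(\sfw_S)\ne 0$ and the desired conclusion are insensitive to the scalar ambiguity in $\sfw_S$, so no normalization is needed.

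Next I would observe that the projection $\pi:V^{\otimes 4}\to (\Alt^2V)^{\otimes 2}$ is $\GL(V)$-equivariant. In characteristic $0$ the splitting $V^{\otimes 2}=\Sym^2V\oplus \Alt^2V$ is a decomposition into $\GL(V)$-subrepresentations, hence so is the induced four-fold decomposition of $V^{\otimes 4}=(V^{\otimes 2})^{\otimes 2}$ displayed before the statement. Consequently $\s^{\otimes 4}$ preserves each of the four summands, and therefore $\pi\circ \s^{\otimes 4}=\s^{\otimes 4}\circ \pi$.

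Now I would apply $\pi$ to both sides of $\s(\sfw_S)=(\hdet \s)\sfw_S$. On the right, linearity gives $\pi\bigl((\hdet\s)\sfw_S\bigr)=(\hdet \s)\,\mu(\sfw_S)\,\sfw_0$. On the left, equivariance gives $\pi(\s(\sfw_S))=\s(\pi(\sfw_S))=\mu(\sfw_S)\,\s(\sfw_0)$, and Lemma \ref{lem.mhdd} applied with $n=2$, $m=2$ yields $\s(\sfw_0)=(\det \s|_V)^2\sfw_0$. Comparing the two expressions gives
$$(\hdet \s)\,\mu(\sfw_S)\,\sfw_0=(\det \s|_V)^2\,\mu(\sfw_S)\,\sfw_0.$$
Since $\sfw_0\ne 0$ and, by hypothesis, $\mu(\sfw_S)\ne 0$, I may cancel both factors to conclude $\hdet \s=(\det \s|_V)^2$ for every $\s\in \GrAut S$.

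The proof is essentially a direct substitution, so I do not expect a serious obstacle; the only non-formal point is the $\GL(V)$-equivariance of $\pi$ in the second paragraph, which is what licenses interchanging $\pi$ with the automorphism. The one thing to be careful about is that the identity $\s(\sfw_S)=(\hdet\s)\sfw_S$ is quoted in the exact generality of \cite[Theorem 3.3]{MS2}: here $S$ is only assumed AS-regular, so $\sfw_S$ is a twisted superpotential rather than an honest one, but this is precisely the setting in which that theorem is stated and in which Lemma \ref{lem.mhdd} already uses it.
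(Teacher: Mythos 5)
Your proof is correct and takes essentially the same approach as the paper: the paper writes $\sfw_S=\sfw_s+\sfw_a$ along the decomposition $V^{\otimes 4}=W_s\oplus W_a$ (with $W_a=(\Alt^2V)^{\otimes 2}$, preserved by $\s$ via Lemma \ref{lem.tso}) and compares components of the identity $\s(\sfw_S)=(\hdet \s)\sfw_S$ from \cite[Theorem 3.3]{MS2}, then applies Lemma \ref{lem.mhdd} — which is exactly your step of applying the $\GL(V)$-equivariant projection $\pi$ and cancelling $\mu(\sfw_S)\sfw_0\neq 0$. The only difference is presentational: projection-then-cancellation versus decompose-and-compare-components.
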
 

\begin{proof} 
Write $W_s:=(\Sym^2V)^{\otimes 2}\oplus (\Sym^2V\otimes \Alt^2V)\oplus (\Alt^2V\otimes \Sym^2V)$ and $W_a:=(\Alt^2V)^{\otimes 2}$ so that $V^{\otimes 4}=W_s\oplus W_a$.  
For $\sfw\in V^{\otimes 2}$, $\s(s(\sfw))=s(\s(\sfw))$ and $\s(a(\sfw))=a(\s(\sfw))$ by Lemma \ref{lem.tso}, so $\s(W_s)=W_s$ and $\s(W_a)=W_a$.  

We may write $\sfw_S=\sfw_s+\sfw_a$ where $\sfw_a:=\pi (\sfw_S)\in W_a, \sfw_s:=\sfw_S-\pi (\sfw_S)\in W_s$ in a unique way.  By \cite [Theorem 3.3]{MS2} and Lemma \ref{lem.mhdd},
$$(\hdet \s)(\sfw_s+\sfw_a)=(\hdet \s)(\sfw_S)=\s(\sfw_S)=\s(\sfw_s+\sfw_a)=\s(\sfw_s)+(\det \s|_V)^2\sfw_a,$$
so 
$$(\hdet \s-(\det \s|_V)^2)\sfw_a=\s(\sfw_s)-(\hdet \s)\sfw_s=0$$
because the left hand side is in $W_a$ and the right hand side is in $W_s$.  
If $\mu (\sfw_S)\neq 0$, 
then $\sfw_a=\pi (\sfw_S)\neq 0$, so $\hdet \s=(\det \s|_V)^2$. 
\end{proof}  

\begin{example} \label{ex.cuas}
If $S=k\<x, y\>/(x^2y-yx^2, y^2x-xy^2)$, then $S$ is a 3-dimensional noetherian cubic AS-regular algebra (but not a Calabi-Yau algebra) such that 
\begin{align*}
\sfw_S & =xy^2x-x^2y^2+yx^2y-y^2x^2 \\
& =-\frac{1}{2}(xyxy-xy^2x-yx^2y+yxyx)+\frac{1}{2}(xyxy+xy^2x+yx^2y+yxyx)-x^2y^2-y^2x^2 \\
& =-\frac{1}{2}\sfw_0+\sfw'
\end{align*}
where $\sfw'\in (\Sym^2V)^{\otimes 2}$, so $\mu (\sfw_S)=-\frac{1}{2}\neq 0$, hence $\hdet \s=(\det \s|_V)^2$ for every $\s\in \GrAut S$ by Theorem \ref{thm.hdd}.     
In fact, it is easy to calculate $\s(\sfw_S)$ for elementary matrices $\s\in \GL(2, k)$:
$$\begin{array}{|c||c|c|c|}
\hline 
\s & \begin{pmatrix} 0 & 1 \\ 1 & 0 \end{pmatrix} & \begin{pmatrix} \a & 0 \\ 0 & 1 \end{pmatrix} & \begin{pmatrix} 1 & \a \\ 0 & 1 \end{pmatrix}\\
\hline 
\det (\s) & -1 & \a & 1 \\
\hline 
\s(\sfw_S) & \sfw_S & \a^2\sfw_S & \sfw_S \\
\hline 
\hdet (\s) & 1 & \a^2 & 1 \\
\hline
\end{array}$$
Since every $\s\in \GL(2, k)$ is a product of elementary matrices, every $\s$ extends to a graded algebra automorphism $\s\in \GrAut S$ by \cite [Theorem 3.2]{MS2}.  Since both $\det $ and $\hdet$ are group homomorphisms, $\hdet \s=(\det \s|_V)^2$ for every $\s\in \GrAut S$ by the above table. 
\end{example} 

Recall that we set $W':=\bigoplus _{i=3}^6k\sfw_i\subset \Sym^4V$ where $\sfw_i$ are as defined in Section \ref{w_i}. 

\begin{lemma} \label{lem.csym}
Let $V$ be a 2-dimensional vector space.  For a superpotential $\sfw\in \Im c\subset V^{\otimes 4}$, $\sfw\in \Sym^4V$
if and only if $\mu (\sfw)=0$.  
\end{lemma}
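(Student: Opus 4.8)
The plan is to work entirely inside the $6$-dimensional space $\Im c$, in which $\Sym^4V$ is a hyperplane (codimension $1$) by the remarks preceding the statement. Since $\mu$ restricts to a linear functional $\mu|_{\Im c}\colon \Im c\to k$, its kernel is a subspace of $\Im c$ of codimension $0$ or $1$. It therefore suffices to establish two things: (i) $\Sym^4V\subseteq \ker(\mu|_{\Im c})$, and (ii) $\mu|_{\Im c}\neq 0$. Granting these, $\ker(\mu|_{\Im c})$ is a codimension-$1$ subspace containing the codimension-$1$ subspace $\Sym^4V$, forcing $\Sym^4V=\ker(\mu|_{\Im c})$, which is exactly the asserted equivalence.

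For (i), I would first observe that $\Sym^4V\subseteq (\Sym^2V)^{\otimes 2}$. Indeed, a fully $\mathfrak S_4$-invariant tensor is in particular invariant under the transpositions $(1\,2)$ and $(3\,4)$; invariance under $(1\,2)$ means symmetry in the first two tensor slots and invariance under $(3\,4)$ means symmetry in the last two, and the intersection of these two conditions is precisely $(\Sym^2V)\otimes(\Sym^2V)$ as a subspace of $V^{\otimes 4}$. Now $(\Sym^2V)^{\otimes 2}$ is one of the summands of $W_s$ in the decomposition $V^{\otimes 4}=W_s\oplus W_a$ used in the proof of Theorem \ref{thm.hdd}, where $W_a=(\Alt^2V)^{\otimes 2}=k\sfw_0$. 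Since $\pi$ is the projection onto $W_a$ along $W_s$, it annihilates $W_s$ and hence $\Sym^4V$; therefore $\mu$ vanishes on $\Sym^4V$.

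For (ii), I would exhibit one superpotential with nonzero $\mu$, and $\sfw_2=xyxy+yxyx$ is the natural candidate. The cleanest route is to expand $\sfw_2$ in terms of the four balanced monomials $xyxy,\,xy^2x,\,yx^2y,\,yxyx$ (the only degree-$4$ monomials with two $x$'s and two $y$'s that can contribute to the $(\Alt^2V)^{\otimes 2}$ summand) and to read off the coefficient of $\sfw_0=xyxy-xy^2x-yx^2y+yxyx$. One finds $\sfw_2=\tfrac12(xyxy+xy^2x+yx^2y+yxyx)+\tfrac12\sfw_0$, where the first term lies in $(\Sym^2V)^{\otimes 2}\subseteq W_s$; hence $\pi(\sfw_2)=\tfrac12\sfw_0$ and $\mu(\sfw_2)=\tfrac12\neq 0$. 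Since $\sfw_2\in \Im c$, this shows $\mu|_{\Im c}\neq 0$ and completes the argument.

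The computations are entirely routine, so there is no serious obstacle; the only point requiring care is the codimension bookkeeping in (ii), namely confirming that $\mu$ does not vanish identically on $\Im c$ (so that $\ker(\mu|_{\Im c})$ is genuinely a hyperplane rather than all of $\Im c$), which is exactly what the explicit value $\mu(\sfw_2)=\tfrac12$ secures. Alternatively, one could bypass the codimension argument altogether by evaluating $\mu$ directly on the basis $\sfw_1+\sfw_2,\sfw_3,\sfw_4,\sfw_5,\sfw_6$ of $\Sym^4V$ together with $\sfw_2$: the grading of $V^{\otimes 4}$ by the numbers of $x$'s and $y$'s is respected by $\pi$, so $\mu$ kills every basis vector outside the balanced piece, and the same expansion as above shows it vanishes on $\sfw_1+\sfw_2$ while $\mu(\sfw_2)=\tfrac12$, again yielding $\ker(\mu|_{\Im c})=\Sym^4V$.
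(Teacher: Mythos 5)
Your proof is correct, and it reaches the conclusion by a somewhat different route than the paper. The paper's proof is a single coordinate computation: writing an arbitrary superpotential as $\sfw=\alpha(\sfw_2-\sfw_1)+\beta(\sfw_2+\sfw_1)+\sfw'$ with $\sfw'\in W'$, it notes that $\sfw_2+\sfw_1$ and $\sfw'$ lie in $(\Sym^2V)^{\otimes 2}$ while $\sfw_2-\sfw_1$ is $\sfw_0$ plus an element of $(\Sym^2V)^{\otimes 2}$, so that $\mu(\sfw)=\alpha$; since $\sfw=(\beta-\alpha)\sfw_1+(\beta+\alpha)\sfw_2+\sfw'$, membership in $\Sym^4V$ is exactly the condition $\alpha=0$. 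You instead compare two hyperplanes of $\Im c$: you show $\Sym^4V\subseteq\ker(\mu|_{\Im c})$ and $\mu|_{\Im c}\neq 0$, and conclude by the codimension count. Both of your ingredients check out: the inclusion $\Sym^4V\subseteq(\Sym^2V)^{\otimes 2}$ via invariance under the transpositions $(1\,2)$ and $(3\,4)$ (together with the standard fact $(A\otimes W)\cap(U\otimes B)=A\otimes B$) is valid, and is more conceptual than the paper's inspection of the basis $\sfw_1+\sfw_2,\sfw_3,\dots,\sfw_6$; and the identity $\sfw_2=\tfrac12(xy+yx)\otimes(xy+yx)+\tfrac12\sfw_0$ indeed gives $\mu(\sfw_2)=\tfrac12$, consistent with the paper's values $\mu(\sfw_2-\sfw_1)=1$ and $\mu(\sfw_2+\sfw_1)=0$. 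The trade-off is this: the paper's direct evaluation produces the explicit formula $\mu(\alpha\sfw_1+\beta\sfw_2+\sfw')=(\beta-\alpha)/2$, which is reused later in the proof of Theorem \ref{thm.dq}, where the relation $\alpha-\beta=-2\mu(\sfw)$ is needed to identify $\lambda$ and $f$; your argument establishes the equivalence without exhibiting this formula, though your two evaluations recover it by linearity. What your version buys is that step (i) is basis-free, so the same argument shows more generally that any symmetric tensor of even degree lies in $(\Sym^2V)^{\otimes m}$ and hence is killed by the projection onto any summand involving an $\Alt^2V$ factor.
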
 

\begin{proof} Every $\sfw\in \Im c$ can be written as $\sfw=\a(\sfw_2-\sfw_1)+\b(\sfw_2+\sfw_1)+\sfw'$ where $\a, \b\in k$ and $\sfw'\in W'$.  
It is easy to see that $\sfw_2+\sfw_1, \sfw '\in (\Sym^2V)^{\otimes 2}$. 
Since $x^2y^2, y^2x^2\in (\Sym^2V)^{\otimes 2}$,
$$\mu (\sfw)=\a \mu (\sfw_2-\sfw_1)=\a\mu (\sfw_0+x^2y^2+y^2x^2)=\a.$$
Since $\sfw=(\b-\a)\sfw_1+(\b+\a)\sfw_2+\sfw'$, $\sfw\in \Sym^4V$ if and only if $\b-\a=\b+\a$ if and only if $\mu (\sfw)=\a=0$.  
\end{proof} 

\begin{corollary} \label{cor.hdd} 
Let $S=T(V)/(R)$ be a 3-dimensional noetherian cubic Calabi-Yau algebra where $R\subset V^{\otimes 3}$.  If $\sfw_S\not \in \Sym^4V$, then $\hdet \s=(\det \s|_V)^2$ for every $\s\in \GrAut S$. 
\end{corollary}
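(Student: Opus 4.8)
The plan is to combine Theorem~\ref{thm.hdd} with the characterization of the symmetric part supplied by Lemma~\ref{lem.csym}, so that the corollary becomes essentially a translation between two equivalent conditions. The statement of Corollary~\ref{cor.hdd} concerns a Calabi-Yau algebra $S=T(V)/(R)$ with $\dim V=2$, and by the structure theorem (Theorem~\ref{thm.ws} together with the discussion preceding it) such an $S$ equals $J(\sfw_S)=\cD(c(\sfw_S))$ for a superpotential $\sfw_S\in\Im c\subset V^{\otimes 4}$. Thus the hypothesis $\sfw_S\not\in\Sym^4V$ is a statement about an element of $\Im c$, which is precisely the setting in which Lemma~\ref{lem.csym} applies.

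First I would observe that $S$ is in particular a $3$-dimensional noetherian cubic AS-regular algebra, since every connected graded Calabi-Yau algebra is AS-regular by \cite[Lemma~1.2]{RRZ} (quoted in the excerpt). This is exactly the hypothesis needed to invoke Theorem~\ref{thm.hdd}, which asserts that $\mu(\sfw_S)\neq 0$ implies $\hdet\s=(\det\s|_V)^2$ for every $\s\in\GrAut S$. So the only gap to close is to convert the given hypothesis $\sfw_S\not\in\Sym^4V$ into the numerical condition $\mu(\sfw_S)\neq 0$.

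Second, that conversion is immediate from Lemma~\ref{lem.csym}: for a superpotential $\sfw\in\Im c\subset V^{\otimes 4}$ over a $2$-dimensional $V$, one has $\sfw\in\Sym^4V$ if and only if $\mu(\sfw)=0$. Applying this to $\sfw=\sfw_S$, the contrapositive gives that $\sfw_S\not\in\Sym^4V$ is equivalent to $\mu(\sfw_S)\neq 0$. Chaining the two implications then yields the conclusion. The entire argument is therefore: (i) note $S$ is cubic AS-regular; (ii) use $\sfw_S\not\in\Sym^4V\Leftrightarrow\mu(\sfw_S)\neq 0$ via Lemma~\ref{lem.csym}; (iii) apply Theorem~\ref{thm.hdd}.

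There is no substantive obstacle here, since the two cited results already do all the real work; the corollary is a formal consequence. The only point requiring minor care is that Theorem~\ref{thm.hdd} is stated for AS-regular algebras rather than Calabi-Yau ones, so one must explicitly record that a Calabi-Yau $S$ is AS-regular (and that $\sfw_S$ is genuinely a superpotential, i.e.\ lies in $\Im c$, so that Lemma~\ref{lem.csym} is applicable). Once those two remarks are in place, the proof is a one-line deduction.
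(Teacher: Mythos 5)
Your proof is correct and follows exactly the paper's own argument: the paper also notes that $S$ being Calabi-Yau makes $\sfw_S\in\Im c$ a superpotential, then cites Lemma~\ref{lem.csym} to translate $\sfw_S\not\in\Sym^4V$ into $\mu(\sfw_S)\neq 0$ and applies Theorem~\ref{thm.hdd}. Your extra remark that Calabi-Yau implies AS-regular (so Theorem~\ref{thm.hdd} is applicable) is a small, correct point that the paper leaves implicit.
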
 

\begin{proof} 
Since $S$ is Calabi-Yau, $\sfw_S\in \Im c$ is a superpotential, so the result follows from Theorem \ref{thm.hdd} and Lemma \ref{lem.csym}. 
\end{proof} 

\begin{proposition} \label{prop.symc}
Every 3-dimensional noetherian cubic Calabi-Yau algebra $S$ such that $\sfw_S\in \Sym ^4V$ is isomorphic to one of the following algebras: 
\begin{enumerate}
\item{} $k\<x, y\>/(xy^2+yxy+y^2x, yx^2+xyx+x^2y)$;
\item{} $k\<x, y\>/(xy^2+yxy+y^2x+x^3, yx^2+xyx+x^2y)$;
\item{} $k\<x, y\>/(xy^2+yxy+y^2x+ax^3, yx^2+xyx+x^2y+ay^3), \; a\in k\setminus \{0, \pm 1,\pm 3\}$.
\end{enumerate}
\end{proposition}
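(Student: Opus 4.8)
The plan is to read off the classification directly from Theorem~\ref{thm.csp}, using Lemma~\ref{lem.csym} to isolate the potentials that lie in $\Sym^4V$. Recall from Section~\ref{w_i} that each superpotential is written uniquely as $\sfw=\a\sfw_1+\b\sfw_2+\sfw'$ with $\sfw'\in W'$, and that $\Sym^4V$ is the codimension-one subspace of $\Im c$ spanned by $\sfw_1+\sfw_2,\sfw_3,\sfw_4,\sfw_5,\sfw_6$. Since $\sfw_1,\sfw_2$ are linearly independent modulo $W'$, we have $\sfw\in\Sym^4V$ precisely when $\a=\b$, which by Lemma~\ref{lem.csym} is exactly the condition $\mu(\sfw)=0$. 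Thus the proposition reduces to scanning the six cases of the proof of Theorem~\ref{thm.csp} and retaining only those Calabi-Yau potentials for which the two leading coefficients coincide.

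First I would dispose of (Case 1), (Case 2) and (Case 3), where the normalization imposes $4\a+2\b=0$; combined with $\a=\b$ this forces $\a=\b=0$, so $\sfw$ degenerates to $0$, to $\sfw_5$, or to $\sfw_3$, the latter two already shown non-Calabi-Yau in those cases. Hence none of these contribute. In (Case 4), (Case 5) and (Case 6) the normalization is $4\a+2\b=N$ with $N=1,1,\l$ respectively, so $\a=\b$ has the single solution $\a=\b=N/6$. The next step is to verify that this solution always lands in the generic subcase (4.3), (5.3), (6.3): the degenerate subcases demand $\b=0$ or $\b=\pm2\a$ (and, in (Case 6), also $\a=0$, $\b=\pm1$, or $2\a-\b=\pm1$), each incompatible with $\a=\b\neq0$ once the excluded values are removed. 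This coefficient bookkeeping, checked line by line against Table~\ref{tab1}, is the part that must be done carefully.

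It then remains to normalize the three survivors. Case (4.3) with $\a=\b=\tfrac16$ gives, after clearing the common scalar, the relations $xy^2+y^2x+yxy$ and $yx^2+x^2y+xyx$, i.e.\ algebra (1). Case (5.3) with $\a=\b=\tfrac16$ gives $xy^2+y^2x+yxy+6x^3$ and $yx^2+x^2y+xyx$; here the diagonal substitution $x\mapsto\mu x,\ y\mapsto\nu y$ with $6\mu^2/\nu^2=1$ rescales the cubic term to coefficient $1$ while preserving the shape of the other relation, yielding an isomorphism onto algebra (2) (Theorem~\ref{thm.ws}). Case (6.3) with $\a=\b=\tfrac{\l}{6}$, where $\l=4\a+2\b$ satisfies $\l\neq\pm2$ and, by the Calabi-Yau constraints, $\l\neq0,\pm6$, gives after clearing $\tfrac{\l}{6}$ the relations of algebra (3) with $a=6/\l$; as $\l$ runs over $k\setminus\{0,\pm2,\pm6\}$, the value $a=6/\l$ runs exactly over $k\setminus\{0,\pm1,\pm3\}$.

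The main obstacle I expect is the last parameter translation: one must confirm that the inequalities carving out subcase (6.3) together with the Calabi-Yau exclusions correspond, under $a=6/\l$, precisely to $a\notin\{0,\pm1,\pm3\}$, and also that the rescaling used for algebra (2) does not secretly identify it with a member of family (1) or (3). The latter cannot happen, since algebra (2) is the unique case among the three carrying a cubic term in exactly one of its two relations; but this, and the full dictionary between the Table~\ref{tab1} parameters and the three normal forms, should be recorded explicitly.
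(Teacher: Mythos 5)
Your proposal is correct and takes essentially the same route as the paper: the paper's own proof of this proposition is the single line ``This follows from the proof of Theorem \ref{thm.csp},'' and your case-by-case scan --- imposing $\a=\b$, showing Cases 1--3 die, landing in the generic subcases (4.3), (5.3), (6.3), and rescaling to the three normal forms --- is exactly the bookkeeping that one-liner leaves implicit. Your coefficient checks and the parameter dictionary $\l\in k\setminus\{0,\pm 2,\pm 6\}\leftrightarrow a=6/\l\in k\setminus\{0,\pm 1,\pm 3\}$ are accurate.
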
 

\begin{proof} This follows from the proof of Theorem \ref{thm.csp}.
\end{proof} 

\begin{theorem} \label{thm.exc}  
Let $S$ be a 3-dimensional noetherian cubic Calabi-Yau algebra.   Then $\hdet \s=(\det \s|_V)^2$ for every $\s\in \GrAut S$ if and only if  
$$S\not \cong k\<x, y\>/(xy^2+yxy+y^2x+\sqrt {-3}x^3, yx^2+xyx+x^2y+\sqrt {-3}y^3).$$
\end{theorem}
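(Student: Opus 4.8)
The plan is to reduce everything to the case $\sfw_S\in\Sym^4V$ and then inspect the three families of Proposition \ref{prop.symc}. If $\sfw_S\notin\Sym^4V$, then $\hdet\s=(\det\s|_V)^2$ for every $\s\in\GrAut S$ by Corollary \ref{cor.hdd}; moreover, since $\Sym^4V$ is $\GL(V)$-stable and the displayed algebra is the member $a=\sqrt{-3}$ of family (3) of Proposition \ref{prop.symc} (whose potential lies in $\Sym^4V$), such an $S$ cannot be isomorphic to it. Thus in this case both sides of the asserted equivalence hold, and it remains only to treat $\sfw_S\in\Sym^4V$, where $S$ is one of the three families of Proposition \ref{prop.symc}.

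The key device in the symmetric case is to transfer the computation to the commutative quartic $g:=\overline{\sfw_S}\in S(V)_4$. By \cite[Theorem 3.3]{MS2} we have $\s(\sfw_S)=(\hdet\s)\,\sfw_S$ for every $\s\in\GrAut S$; writing $\sfw_S=\widetilde{g}$ via Lemma \ref{lem.stilde} and using $\s(\widetilde{g})=\widetilde{\s(g)}$ from Lemma \ref{lem.qq2}, this reads $\s(g)=(\hdet\s)\,g$. Conversely Lemma \ref{lem.sww'} shows every $\s\in\GL(V)$ with $\s(g)\in kg$ lies in $\GrAut S$. Hence $\GrAut S$ is exactly the stabilizer of $[g]$ in $\GL(V)$, and $\hdet\s$ is the scalar by which $\s$ rescales $g$. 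Since both $\hdet$ and $\s\mapsto(\det\s|_V)^2$ are homomorphisms $\GrAut S\to k^\times$, it suffices to compare these scalars on a set of generators of the stabilizer of $[g]$.

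For families (1) and (2) one has $g\sim x^2y^2$ and $g\sim x^2(x^2+y^2)$. Here the stabilizer of $[g]$ must preserve the repeated roots, so it consists of monomial matrices in case (1) and of $\diag(a,\pm a)$ in case (2); a one-line computation gives $\s(g)=(\det\s|_V)^2g$ on each generator, so the formula holds. For family (3) we have $g\sim g_\lambda=x^4+y^4+\lambda x^2y^2$ with $\lambda=6/a$ and four distinct roots. The diagonal and anti-diagonal symmetries $\diag(\alpha,\pm\alpha)$ and $\left(\begin{smallmatrix} 0 & \beta \\ \pm\beta & 0\end{smallmatrix}\right)$ form a Klein four-group fixing $[g_\lambda]$, on which $\hdet\s=(\det\s|_V)^2$ is immediate. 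By the classification of the symmetries of $g_\lambda$ carried out in the proof of Lemma \ref{lem.quo}, the only further candidates are the ``third type'' matrices $\left(\begin{smallmatrix} \alpha & \beta \\ -\xi\alpha & \xi\beta \end{smallmatrix}\right)$, and these fix $[g_\lambda]$ precisely when the associated $\lambda'$ equals $\lambda$, i.e. exactly when $\lambda^2=-12$, equivalently $a=\pm\sqrt{-3}$.

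It remains to show that this extra automorphism breaks the formula. Taking the third-type matrix with $\alpha=1$, $\beta=\sqrt{-1}$, $\xi=1$ (so $\beta^2=-\alpha^2$), the computation analogous to the one in the proof of Lemma \ref{lem.quo} gives $\s(g_\lambda)=(2+\lambda)\,g_\lambda$, so $\hdet\s=2+\lambda$, while $\det\s|_V=2\sqrt{-1}$ and hence $(\det\s|_V)^2=-4$. As $2+\lambda=-4$ would force $\lambda=-6$, contradicting $\lambda^2=-12$, we get $\hdet\s\neq(\det\s|_V)^2$. Thus the formula fails exactly for $a=\pm\sqrt{-3}$; since $a=\sqrt{-3}$ and $a=-\sqrt{-3}$ yield $g_\lambda\sim g_{-\lambda}$ and so isomorphic algebras by Theorem \ref{thm.ws}, there is a single exceptional algebra, namely the displayed one, and for every other member of family (3) only the Klein four-group of symmetries occurs and the formula holds. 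Assembling the cases gives the equivalence. I expect the family (3) analysis to be the main obstacle: one must verify that no value of $a$ other than $\pm\sqrt{-3}$ introduces a symmetry beyond the obvious Klein four-group, which is precisely the harmonic locus $\lambda^2=-12$ isolated above.
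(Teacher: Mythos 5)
Your proposal reaches the correct conclusion and takes a genuinely different route through the key computation. Where the paper expands $\s(\overline{\sfw_S})=\l\,\overline{\sfw_S}$ for a general matrix $\s=\left(\begin{smallmatrix}\a&\b\\\c&\d\end{smallmatrix}\right)$ in each family of Proposition \ref{prop.symc} and solves the resulting coefficient equations from scratch (the crux of its Case 3 being the factorization $(3\a\b+a\c\d)(\a\d+\b\c)(\a\d-\b\c)=0$), you identify $\GrAut S$ with the $\GL(V)$-stabilizer of $[\overline{\sfw_S}]$ --- correctly justified via \cite[Theorem 3.3]{MS2} together with Lemmas \ref{lem.stilde}, \ref{lem.qq2} and \ref{lem.sww'} --- and then reuse the classification of the matrices realizing $g_\l\sim g_{\l'}$ already carried out in the proof of Lemma \ref{lem.quo}, specialized to $\l'=\l$. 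This buys brevity and makes conceptually visible why the exception sits at the harmonic parameter, whereas the paper's computation is longer but self-contained. Two points you leave implicit are harmless: stabilizer elements satisfy $\s(g_\l)=c\,g_\l$ rather than $\s(g_\l)=g_{\l'}$ on the nose, but rescaling $\s$ by a fourth root of $c$ reduces to the situation of Lemma \ref{lem.quo} and does not affect the truth of $\hdet\s=(\det\s|_V)^2$; and passing to the normalized forms $x^2y^2$, $x^2(x^2+y^2)$, $g_\l$ is fine since the property in question is invariant under conjugation.

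There is, however, one step that is false as stated and needs a patch: the claim that the third-type matrices fix $[g_\l]$ ``exactly when $\l^2=-12$.'' Setting $\l'=\l$ in the four sub-cases recorded in the proof of Lemma \ref{lem.quo} gives $(2+\l)^2=16$, i.e.\ $\l=-6$ (as $\l\neq2$), when $\b^2=\a^2,\xi^2=1$; it gives $\l^2=-12$ when $\b^2=-\a^2,\xi^2=1$ and when $\b^2=\a^2,\xi^2=-1$; and it gives $(2-\l)^2=16$, i.e.\ $\l=6$, when $\b^2=-\a^2,\xi^2=-1$. So extra symmetries also occur at $\l=\pm6$. Your conclusion survives only because $\l=6/a=\pm6$ means $a=\pm1$, which is excluded from family (3) of Proposition \ref{prop.symc} (those algebras fail to be Calabi-Yau, cf.\ Theorem \ref{thm.deg}); this exclusion has to be invoked explicitly, and you never do. With that one-line repair the argument closes: your explicit violating automorphism is fine (I checked that $\hdet\s=2+\l\neq-4=(\det\s|_V)^2$, which agrees with the paper's value $-4\a^4\om$ for a primitive cube root of unity $\om$), and the identification of the two values $a=\pm\sqrt{-3}$ up to isomorphism via Theorem \ref{thm.ws} matches the paper's explicit change of variables $x\mapsto\sqrt{-1}x$, $y\mapsto y$.
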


\begin{proof} \label{ex.cuas2}
By Corollary \ref{cor.hdd}, it is enough to check the algebras in Proposition \ref{prop.symc}.  Let $\sfw\in \Im c\subset V^{\otimes 4}$ be a Calabi-Yau superpotential.  Then $\s\in \GL(V)$ extends to $\s\in \GrAut J(\sfw)$ if and only if $\s(\sfw)=\l \sfw$ for some $\l \in k$ by \cite[Theorem 3.2]{MS2}.  In this case, $\l=\hdet \s$ by \cite [Theorem 3.3]{MS2}, and 
$\s(\overline \sfw)=\overline {\s(\sfw)}=\overline {\l \sfw}=\l \overline \sfw$ by Lemma \ref{lem.qq2}.  
 
(Case 1) If $S=k\<x, y\>/(xy^2+yxy+y^2x, yx^2+xyx+x^2y)$, then
$\sfw_S=x^2y^2+xy^2x+y^2x^2+yx^2y+xyxy+yxyx\in \Sym^4V$.  Since $\overline \sfw_S=6x^2y^2$, if $\s=\left(\begin{smallmatrix} \a & \b \\ \c & \d \end{smallmatrix}\right) \in \GL(2, k)$ extends to $\s\in \GrAut S$, then 
$$\s(x^2y^2)=(\a x+\b y)^2(\c x+\d y)^2=\a^2\c^2x^4+\cdots +\b^2\d ^2y^4=\l x^2y^2$$
for some $\l\in k$, so $\a\c=\b\d=0$. Since $\a\d-\b\c\neq 0$, it follows that either $\s=\left(\begin{smallmatrix} \a & 0 \\ 0 & \d \end{smallmatrix}\right)$ or $\s=\left(\begin{smallmatrix} 0 & \b \\ \c & 0 \end{smallmatrix}\right)$.  
If $\s=\left(\begin{smallmatrix} \a & 0 \\ 0 & \d \end{smallmatrix}\right)$, then $\hdet \s=\a^2\d^2$ and $\det \s|_V=\a\d$.  
If $\s=\left(\begin{smallmatrix} 0 & \b \\ \c & 0 \end{smallmatrix}\right)$, then $\hdet \s=\b^2\c^2$ and $\det \s|_V=-\b\c$.  
In either case, $\hdet \s=(\det \s|_V)^2$.

(Case 2) If $S=k\<x, y\>/(xy^2+yxy+y^2x+x^3, yx^2+xyx+x^2y)$, then
$\sfw_S=x^2y^2+xy^2x+y^2x^2+yx^2y+xyxy+yxyx + x^4\in \Sym^4V$.
Since $\overline \sfw_S=6x^2y^2+x^4$, if $\s=\left(\begin{smallmatrix} \a & \b \\ \c & \d \end{smallmatrix}\right) \in \GL(2, k)$ extends to $\s\in \GrAut S$, then
$\s(6x^2y^2+x^4)=\l (6x^2y^2+x^4)$ for some $\l\in k$.
Since
\begin{align*}
\s(6x^2y^2+x^4)
&= 6(\a x+\b y)^2(\c x+\d y)^2+(\a x+\b y)^4\\
&= \a^2(6\c^2+\a^2)x^4
+4\a(3\a\c\d +3\b\c^2 +\a^2\b)x^3y
+6(\a^2\d^2 + 4\a\b\c\d +\b^2\c^2 +\a^2\b^2)x^2y^2\\
&\qquad +4\b(3\b\c\d+3\a\d^2+\a\b^2)xy^3
+\b^2(6\d^2+\b^2)y^4,
\end{align*}
we obtain
$4\a(3\a\c\d +3\b\c^2 +\a^2\b)=0$, $4\b(3\b\c\d+3\a\d^2+\a\b^2)=0$, $\b^2(6\d^2+\b^2)=0$, and
$\a^2\d^2 + 4\a\b\c\d +\b^2\c^2 +\a^2\b^2 = \a^2(6\c^2+\a^2)$.
If $\b\neq 0$, then $\b^2=-6\d^2$, so
$0=4\b(3\b\c\d+3\a\d^2+\a\b^2)=4\b(3\b\c\d-3\a\d^2)=-3\b\d(\a\d-\b\c)$.
This is a contradiction, so $\b=0$. Then we have $\a^2\c\d=0$. Since $\a\d-\b\c = \a\d \neq 0$, we see $\c=0$.
Therefore $\s = \left(\begin{smallmatrix} \a & 0 \\ 0 & \d \end{smallmatrix}\right)$ and $\a^2=\d^2$.
We can check that $\det \s|_V=\a\d$ and $\hdet \s=\a^2\d^2$, so $\hdet \s=(\det \s|_V)^2$.

(Case 3) If $S=k\<x, y\>/(xy^2+yxy+y^2x+ax^3, yx^2+xyx+x^2y+ay^3)$ where $a\in k\setminus \{0, \pm 1,\pm 3\}$,
then
$\sfw_S=x^2y^2+xy^2x+y^2x^2+yx^2y+xyxy+yxyx + ax^4 +ay^4 \in \Sym^4V$.
Since $\overline \sfw_S=6x^2y^2+ax^4+ay^4$, if $\s= \left(\begin{smallmatrix} \a & \b \\ \c & \d \end{smallmatrix}\right) \in \GL(2, k)$ extends to $\s\in \GrAut S$, then
$\s(6x^2y^2+ax^4+ay^4)=\l (6x^2y^2+ax^4+ay^4)$
for some $\l\in k$.
Since
\begin{align*}
\s(6x^2y^2+ax^4+ay^4)
&= 6(\a x+\b y)^2(\c x+\d y)^2+a(\a x+\b y)^4+a(\c x+\d y)^4\\
&= (6\a^2\c^2+a\a^4+a\c^4)x^4
+4(3\a^2\c\d+3\a\b\c^2+a\a^3\b+a\c^3\d)x^3y\\
&\qquad +6(\a^2\d^2+4\a\b\c\d+\b^2\c^2+a\a^2\b^2+a\c^2\d^2)x^2y^2\\
&\qquad\qquad +4(3\b^2\c\d+3\a\b\d^2+a\a\b^3+a\c\d^3)xy^3
+(6\b^2\d^2+a\b^4+a\d^4)y^4,
\end{align*}
it follows that
\begin{align}
&3\a^2\c\d+3\a\b\c^2+a\a^3\b+a\c^3\d=0, \quad 3\b^2\c\d+3\a\b\d^2+a\a\b^3+a\c\d^3=0, \; \textrm{and} \label{eqhdet1}\\
&\a^2\d^2+4\a\b\c\d+\b^2\c^2+a\a^2\b^2+a\c^2\d^2 =a^{-1}(6\a^2\c^2+a\a^4+a\c^4)=a^{-1}(6\b^2\d^2+a\b^4+a\d^4). \label{eqhdet2}
\end{align}
By the equations in (\ref{eqhdet1}), we have $(3\a\b+a\c\d)(\a\d+\b\c)(\a\d-\b\c)=0$.

Assume that $3\a\b+a\c\d=0$. Then $(9-a^2)\a^2\c\d= (9-a^2)\b^2\c\d=0$ by (\ref{eqhdet1}).
Since $a\neq \pm 3$ and $\a\d-\b\c\neq 0$, it follows that either $\c=0$ or $\d=0$.
If $\c =0 $, then $\b=0$, so $\s = \left(\begin{smallmatrix} \a & 0 \\ 0 & \d \end{smallmatrix}\right)$ and $\a^2=\d^2$ by (\ref{eqhdet2}).
We can check that $\det \s|_V=\a\d$ and $\hdet \s=\a^2\d^2$, so $\hdet \s=(\det \s|_V)^2$.
If $\d =0 $, then $\a=0$, so $\s = \left(\begin{smallmatrix} 0 & \b \\ \c & 0 \end{smallmatrix}\right)$ and $\b^2=\c^2$ by (\ref{eqhdet2}).
We can check that $\det \s|_V=-\b\c$ and $\hdet \s=\b^2\c^2$, so $\hdet \s=(\det \s|_V)^2$.

Assume that $\a\d+\b\c=0$. Then $\a^4=\c^4, \b^4=\d^4$ by (\ref{eqhdet1}),
so $\s = \left(\begin{smallmatrix} \a & \b \\ -\xi\a & \xi\b \end{smallmatrix}\right)$ where $\xi$ is a 4th root of unity.
By (\ref{eqhdet2}), we see $(3\xi^2+a)(\a^4-\b^4)=0$. Since  $a\neq \pm 3$, it follows that $\b^2=\pm \a^2$.
Since $(2a-2\xi^2)\a^2\b^2 = a^{-1}(6\xi^2+2a)\a^4$ by (\ref{eqhdet2}),
we have
$a^2 +(1-\xi^2)a +3\xi^2=0$ or $a^2 -(1+\xi^2)a -3\xi^2=0$.
Since $\xi^2=\pm 1$ and $a \neq \pm 1, \pm 3$, we see $a = \pm \sqrt{-3}$.
Hence it follows that unless $a = \pm \sqrt{-3}$, $\hdet \s=(\det \s|_V)^2$ for every $\s\in \GrAut S$.

Conversely, suppose that $S=k\<x, y\>/(xy^2+yxy+y^2x+\sqrt{-3} x^3, yx^2+xyx+x^2y+\sqrt{-3} y^3)$.
If $\s = \left(\begin{smallmatrix} \a & \b \\ -\xi\a & \xi\b \end{smallmatrix}\right)$ where $\b^2=\pm \a^2, \xi^2=\mp 1$,
then we can check that $\s\in \GL(2, k)$ extends to $\s\in \GrAut S$, and $\det \s|_V=2\xi\a\b$, so $(\det \s|_V)^2=-4\a^4$.
Since
\begin{align*}
&\s(6x^2y^2+\sqrt{-3}x^4+\sqrt{-3}y^4)\\
&= \a^4(-2\sqrt{-3}\xi^2 +2)\sqrt{-3}x^4
+(-2\a^2\b^2\xi^2 + 2\sqrt{-3}\a^2\b^2)6x^2y^2
+\b^4(-2\sqrt{-3}\xi^2 +2)\sqrt{-3}y^4\\
&= \a^4(2 \pm 2\sqrt{-3})\sqrt{-3}x^4
+(2\a^4 \pm 2\sqrt{-3}\a^4)6x^2y^2
+\a^4(2 \pm 2\sqrt{-3})\sqrt{-3}y^4\\
&= \a^4(2 \pm 2\sqrt{-3})(6x^2y^2+\sqrt{-3}x^4+\sqrt{-3}y^4),
\end{align*}
we obtain $\hdet \s = \a^4(2 \pm 2\sqrt{-3}) = -4\a^4(\frac{-1 \mp \sqrt{-3}}{2}) = -4\a^4\om= (\det \s|_V)^2\om\neq (\det \s|_V)^2$ 
where $\om$ is a primitive 3rd root of unity. 

The homomorphism defined by $x\mapsto \sqrt {-1}x, y\mapsto y$ induces the isomorphism 
\begin{align*}
& k\<x, y\>/(xy^2+yxy+y^2x+\sqrt{-3} x^3, yx^2+xyx+x^2y+\sqrt{-3} y^3) \\
&\cong k\<x, y\>/(xy^2+yxy+y^2x-\sqrt{-3} x^3, yx^2+xyx+x^2y-\sqrt{-3} y^3), 
\end{align*}
so we have the result. 
\end{proof}

\section{Connections to Other Classes of Algebras} 

Let $S=J(\sfw)=T(V)/(R)$ be a 3-dimensional noetherian quadratic Calabi-Yau algebra.  In \cite {MS1}, it was shown that $c(\sfw)\not \in \Sym^3V$ if and only if $S$ is a deformation quantization of the polynomial algebra $k[x, y, z]$, and $c(\sfw)\in \Sym^3V$ if and only if $S$ is a Clifford algebra.  In this last section, we will see something similar holds at least in one direction.  

\subsection{Deformation Quantizations} 

For a scalar $\l\in k$ and a homogeneous polynomial $f\in k[x, y, z]$ of degree $3$ where $\deg x=\deg y=\deg z=1$, the algebra $S^{\l}_f:=k\<x, y, z\>/([y, z]-\l \widetilde {f_x}, [z, x]-\l \widetilde {f_y}, [x, y]-\l \widetilde {f_z})$ is a \emph{deformation quantization} of $k[x, y, z]$ (\cite {DM}), which is often a 3-dimensional noetherian quadratic Calabi-Yau algebra (\cite {MS1}).  
For a scalar $\l\in k$ and a weighted homogeneous polynomial $f\in k[x, y, z]$ of degree $4$ where $\deg x=\deg y=1$, $\deg z=2$, we may still define $S^{\l}_f:=k\<x, y, z\>/([y, z]-\l \widetilde {f_x}, [z, x]-\l \widetilde {f_y}, [x, y]-\l \widetilde {f_z})$.  We do not know if $S^{\l}_f$ is a deformation quantization of $k[x, y, z]$, but we will study when a 3-dimensional noetherian cubic Calabi-Yau algebra is of the form $S^{\l}_f$.  

\begin{theorem} \label{thm.dq}
Let $V$ be a 2-dimensional vector space and $\sfw\in V^{\otimes 4}$.  Fix a basis $x, y$ for $V$.  If $c(\sfw)\not \in \Sym^4V$, then $J(\sfw)=S^{\l}_f$ where  $\l=-\frac{3}{8\mu (c(\sfw))}\in k$ and $f=\overline {\sfw}+\frac{4\mu (c(\sfw))}{3}z^2\in k[x, y, z]$ with $\deg x=\deg y=1, \deg z=2$.
\end{theorem}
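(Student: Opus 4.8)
The plan is to exhibit the natural homomorphism $k\<x,y\>\to S^{\l}_f$, $x\mapsto x$, $y\mapsto y$, and show it induces an isomorphism $J(\sfw)\cong S^{\l}_f$. Recall $J(\sfw)=k\<x,y\>/(\partial_x c(\sfw),\partial_y c(\sfw))$. The first step is to eliminate the degree-$2$ generator $z$ from $S^{\l}_f$. Since $f=\overline{\sfw}+\tfrac{4\mu(c(\sfw))}{3}z^{2}$ has $f_z=\tfrac{8\mu(c(\sfw))}{3}z$, the third defining relation $[x,y]-\l\widetilde{f_z}$ is linear in $z$ and, for the stated $\l$, identifies $z$ with a scalar multiple of the degree-$2$ commutator $[x,y]$. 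Substituting this expression back shows $S^{\l}_f\cong k\<x,y\>/(\bar r_1,\bar r_2)$, where $\bar r_1,\bar r_2$ are the images of the first two relations; both are homogeneous of degree $3$, matching the degree of the relations of $J(\sfw)$.

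Next I would compute $\bar r_1,\bar r_2$ explicitly. Because $f_x=\overline{\sfw}_x$ and $f_y=\overline{\sfw}_y$ involve no $z$, and $\widetilde{\overline{\sfw}}=s(\sfw)=s(c(\sfw))$, Lemma \ref{lem.pd} gives $\widetilde{f_x}=4\,\partial_x s(c(\sfw))$ and $\widetilde{f_y}=4\,\partial_y s(c(\sfw))$. Hence, after the substitution for $z$, each $\bar r_i$ is a linear combination of an iterated commutator $[y,[y,x]]$ (respectively $[x,[x,y]]$) and the ``symmetric'' derivative $\partial_x s(c(\sfw))$ (respectively $\partial_y s(c(\sfw))$).

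The crux is an identity expressing the Jacobian derivative in these same terms. Writing $c(\sfw)=A\sfw_1+B\sfw_2+\sfw'$ with $\sfw'\in W'$, so that $\mu(c(\sfw))=\tfrac{B-A}{2}$, and using $s(\sfw_1)=\tfrac{2}{3}(\sfw_1+\sfw_2)$, $s(\sfw_2)=\tfrac{1}{3}(\sfw_1+\sfw_2)$, $s(\sfw')=\sfw'$, a direct computation on the basis $\sfw_1,\dots,\sfw_6$ yields
\[
\partial_x c(\sfw)=-\tfrac{2}{3}\mu(c(\sfw))\,[y,[y,x]]+\partial_x s(c(\sfw)),
\]
together with its mirror under $x\leftrightarrow y$. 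Substituting these into $\bar r_1,\bar r_2$, the coefficient $\tfrac{4\mu(c(\sfw))}{3}$ in $f$ and the value of $\l$ are exactly the choices that make the $\partial_x s(c(\sfw))$ and $\partial_y s(c(\sfw))$ contributions cancel, leaving $\bar r_1=\kappa\,\partial_x c(\sfw)$ and $\bar r_2=\kappa\,\partial_y c(\sfw)$ for a single scalar $\kappa=-\tfrac{3}{2\mu(c(\sfw))}$.

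Finally, the hypothesis $c(\sfw)\notin\Sym^4V$ is equivalent, by Lemma \ref{lem.csym}, to $\mu(c(\sfw))\neq0$, which is precisely what makes $\l$ well-defined and $\kappa\neq0$. Therefore $(\bar r_1,\bar r_2)=(\partial_x c(\sfw),\partial_y c(\sfw))$ as two-sided ideals, and the map above is the desired isomorphism $J(\sfw)\cong S^{\l}_f$. I expect the main obstacle to be the scalar bookkeeping in the displayed identity: one must track how the antisymmetric part of $c(\sfw)$ (measured by $\mu$) produces exactly the iterated-commutator term, and verify that the $z^{2}$-coefficient of $f$ and the prefactor $\l$ are tuned so that the symmetric derivatives drop out entirely, rather than contributing spurious $yxy$- and $xyx$-terms that would otherwise prevent the two ideals from coinciding.
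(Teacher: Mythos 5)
Your outline follows the paper's proof exactly in spirit: use the third relation of $S^{\l}_f$ to eliminate $z$, then show the two surviving relations are scalar multiples of $\partial_x c(\sfw)$ and $\partial_y c(\sfw)$. Your organizing identity is also correct: writing $c(\sfw)=A\sfw_1+B\sfw_2+\sfw'$, one indeed has $s(\sfw_1)=\frac{2}{3}(\sfw_1+\sfw_2)$, $s(\sfw_2)=\frac{1}{3}(\sfw_1+\sfw_2)$, $s(\sfw')=\sfw'$, whence
\[
\partial_x c(\sfw)-\partial_x s(c(\sfw))=\tfrac{A-B}{3}\left(xy^2+y^2x-2yxy\right)=-\tfrac{2}{3}\mu(c(\sfw))\,[y,[y,x]],
\]
which is a clean repackaging of the paper's term-by-term computation with $\a(xy^2+y^2x)+\b yxy+\partial_x\sfw'$.

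However, the step you deferred as ``scalar bookkeeping'' is exactly where the argument breaks, and it does break: the stated constants are \emph{not} the choices that make the symmetric contributions cancel. Since $f_z=\frac{8\mu(c(\sfw))}{3}z$ and $\l=-\frac{3}{8\mu(c(\sfw))}$, the third relation is $[x,y]-\l\widetilde{f_z}=[x,y]+z$, so it forces $z=[y,x]$, not $[x,y]$. Writing $\mu:=\mu(c(\sfw))$ and using $\widetilde{f_x}=4\partial_x s(c(\sfw))$, your identity then gives
\[
\bar r_1=[y,z]-\l\widetilde{f_x}=[y,[y,x]]+\tfrac{3}{2\mu}\,\partial_x s(c(\sfw))=2[y,[y,x]]+\tfrac{3}{2\mu}\,\partial_x c(\sfw),
\]
i.e.\ the commutator terms double rather than cancel, and $\bar r_1$ is not a multiple of $\partial_x c(\sfw)$. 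This is fatal, not cosmetic: for $\sfw=\sfw_1$ (so $\mu=-\frac{1}{2}$, $\l=\frac{3}{4}$, $f=4x^2y^2-\frac{2}{3}z^2$) one computes $S^{\l}_f\cong k\<x,y\>/(xy^2+y^2x+4yxy,\; yx^2+x^2y+4xyx)$, which is row (4.3) of Table \ref{tab1}, while $J(\sfw_1)=k\<x,y\>/(xy^2+y^2x,\; yx^2+x^2y)$ is row (4.1); their point schemes differ, so they are not isomorphic. The cancellation requires $z=[x,y]$, which (with $\l=-\frac{3}{8\mu}$) forces the $z^2$-coefficient of $f$ to be $-\frac{4\mu}{3}$, and then $\bar r_1=\kappa\,\partial_x c(\sfw)$ with $\kappa=+\frac{3}{2\mu}$ (your $\kappa$ also has the wrong sign). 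To be fair, this sign slip is in the printed statement, and the paper's own proof commits it at the same spot: it asserts $[x,y]-\l\widetilde{f_z}=xy-yx-z$ although its $f$ and $\l$ give $xy-yx+z$, and then substitutes $z=xy-yx$. With $f=\overline{\sfw}-\frac{4\mu(c(\sfw))}{3}z^2$ both your argument and the paper's go through verbatim; but as written, your assertion that the stated $\l$ and $z^2$-coefficient ``are exactly the choices that make the contributions cancel'' is false, and asserting it is not a substitute for performing the check.
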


\begin{proof} Since $J(c(\sfw))=J(\sfw)$, we may assume that $\sfw=c(\sfw)\in \Im c$ is a superpotential.  If $V$ is a 2-dimensional vector space with a basis $x, y$, then we may write $\sfw=\a \sfw_1+\b \sfw_2+\sfw'$ where $\a, \b\in k$ and $\sfw'\in W'$, and, in this case,   
\begin{align*}
\partial_x\sfw=\a(xy^2+y^2x)+\b yxy+\partial_x\sfw', \quad
\partial_y\sfw=\a(yx^2+x^2y)+\b xyx+\partial_y\sfw'.
\end{align*}
Note that $\sfw\not \in \Sym^4V$ if and only if $\a\neq \b$, and, in this case,  we set $\l=\frac{3}{4(\a-\b)}$ and $f=\overline {\sfw}-\frac{2(\a-\b)}{3}z^2$.
Since $f=2(2\a+\b)x^2y^2+f'-\frac{2(\a-\b)}{3}z^2$ where $f'=\overline {\sfw'}$, it follows that $[x, y]-\l \widetilde {f_z} =xy-yx-z$.
Suppose that $[x, y]-\l \widetilde {f_z} =xy-yx-z=0$. Then, by Lemma \ref{lem.pd},
\begin{align*}
[y, z]-\l \widetilde {f_x} & =yz-zy-\frac{3}{4(\a-\b)}(4(2\a+\b)\widetilde {xy^2}+\widetilde {f'_x}) \\
& =y(xy-yx)-(xy-yx)y-\frac{3(2\a+\b)}{\a-\b}\frac{xy^2+yxy+y^2x}{3}-\frac{3}{4(\a-\b)}\widetilde {f'_x} \\
& =-(\frac{2\a+\b}{\a-\b}+1)(xy^2+y^2x)-(\frac{2\a+\b}{\a-\b}-2)yxy-\frac{3}{4(\a-\b)}4\partial _x\sfw' \\
& =-\frac{3}{\a-\b}(\a (xy^2+y^2x)+\b yxy+\partial _x\sfw') \\
& =-\frac{3}{\a-\b} \partial_x\sfw
\end{align*}
and, similarly, 
$[z, x]-\l \widetilde {f_y}=-\frac{3}{\a-\b} \partial_y\sfw$.
Since $\sfw=\a\sfw_1+\b\sfw_2+\sfw'=\frac{\b-\a}{2}(\sfw_2-\sfw_1)+\frac{\b+\a}{2}(\sfw_2+\sfw_1)+\sfw'$ where $\a, \b\in k$ and $\sfw'\in W'$, we have $\a-\b=-2\mu (\sfw)$, so $\l=-\frac{3}{8\mu (c(\sfw))}\in k$ and $f=\overline {\sfw}+\frac{4\mu (c(\sfw))}{3}z^2\in k[x, y, z]$.
\end{proof}

\subsection{Clifford Algebras} 

A 3-dimensional noetherian cubic AS-regular algebra is never a Clifford algebra in the usual sense.  We need to define a new class of algebras which look like Clifford algebras. 

\begin{definition} Let $M_1, \dots, M_n\in M_n(k)$ be square matrices of degree $n$ over $k$.
We define a graded algebra by 
$$A(M_1, \dots, M_n):=k\<x_1, \dots, x_n, y_1, \dots, y_n\>/(x_ix_j^2+x_jx_ix_j+x_j^2x_i-\sum _{k=1}^n(M_k)_{ij}y_k; \;  \textnormal {$y_k$ are central} )$$
with $\deg x_i=1, \deg y_i=3$.  
\end{definition} 

\begin{theorem} \label{thm.cli}
Let $V$ be a 2-dimensional vector space. If $\sfw\in \Sym^4V$ is a Calabi-Yau superpotential, then $J(\sfw)$ is isomorphic to $A(M_1, M_2)$ for some $M_1, M_2\in M_2(k)$.
\end{theorem}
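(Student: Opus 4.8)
The plan is to reduce to the explicit list in Proposition~\ref{prop.symc} and then realise each algebra there as an $A(M_1,M_2)$ by a single elimination argument. By Proposition~\ref{prop.symc} it suffices to treat the three algebras listed there, and all of them have the uniform shape $J(\sfw)=k\<x,y\>/(R_1,R_2)$ with
$$R_1=\partial_x\sfw=u+c_5x^3,\qquad R_2=\partial_y\sfw=v+c_6y^3,$$
where $u=xy^2+yxy+y^2x$, $v=yx^2+xyx+x^2y$, and $(c_5,c_6)$ is $(0,0)$, $(1,0)$, or $(a,a)$ with $a\neq 0,\pm1,\pm3$. The only feature of these parameters I will use is that in every case $c_5c_6\neq 1$.

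Writing $x_1=x$, $x_2=y$ and $c_{ij}:=x_ix_j^2+x_jx_ix_j+x_j^2x_i$, the four cubics in the definition of $A(M_1,M_2)$ are $c_{11}=3x^3$, $c_{22}=3y^3$, $c_{12}=u$, and $c_{21}=v$. I would set $y_1:=x^3$, $y_2:=y^3$ and choose
$$M_1=\begin{pmatrix}3 & -c_5\\ 0 & 0\end{pmatrix},\qquad M_2=\begin{pmatrix}0 & 0\\ -c_6 & 3\end{pmatrix},$$
so that the four identities $c_{ij}=\sum_k(M_k)_{ij}y_k$ become, respectively, the tautologies $3x^3=3y_1$, $3y^3=3y_2$ and the relations $u=-c_5y_1$ (i.e.\ $R_1=0$) and $v=-c_6y_2$ (i.e.\ $R_2=0$). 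Because $(M_2)_{11}=0$ and $(M_1)_{22}=0$, the $(1,1)$ and $(2,2)$ relations of $A(M_1,M_2)$ read $3x_1^3=3y_1$ and $3x_2^3=3y_2$; a Tietze transformation then eliminates the central generators via the substitution $y_1=x^3$, $y_2=y^3$, giving
$$A(M_1,M_2)\;\cong\;k\<x,y\>/(R_1,R_2,[x^3,y],[y^3,x],[x^3,y^3]),$$
where the last three relations are the images of the conditions that $y_1,y_2$ be central.

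The \emph{heart of the proof}, and the step I expect to be the main obstacle, is to show that these three commutator relations already lie in $(R_1,R_2)$, i.e.\ that $x^3$ and $y^3$ are central in $J(\sfw)=k\<x,y\>/(R_1,R_2)$; this is exactly where $c_5c_6\neq 1$ is needed. I would argue inside the free algebra: a direct expansion gives the degree-$4$ identities $xv-vx=x^3y-yx^3=[x^3,y]$ and $yu-uy=y^3x-xy^3=[y^3,x]$, hence
$$xR_2-R_2x=[x^3,y]-c_6[y^3,x],\qquad yR_1-R_1y=[y^3,x]-c_5[x^3,y].$$
Since the left-hand sides lie in $(R_2)$ and $(R_1)$ respectively, both vanish modulo $(R_1,R_2)$, so in $J(\sfw)$ one has $[x^3,y]=c_6[y^3,x]$ and $[y^3,x]=c_5[x^3,y]$, whence $(1-c_5c_6)[x^3,y]=0$. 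As $c_5c_6\neq 1$ this forces $[x^3,y]=0$ and then $[y^3,x]=0$; consequently $x^3,y^3$ are central and also commute with one another, so $[x^3,y^3]=0$ as well.

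Combining the two steps, $(R_1,R_2,[x^3,y],[y^3,x],[x^3,y^3])=(R_1,R_2)$, so the presentation above collapses to $A(M_1,M_2)\cong k\<x,y\>/(R_1,R_2)=J(\sfw)$, which is the assertion. The genuinely delicate points are the centrality computation (which must be carried out as an identity in the tensor algebra before dividing by the relations) and the careful bookkeeping in the Tietze elimination; no input beyond Proposition~\ref{prop.symc} is required, since that proposition has already reduced matters to the three forms above with $c_5c_6\neq 1$.
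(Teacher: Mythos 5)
Your proposal is correct and follows essentially the same route as the paper: reduce to the three algebras of Proposition \ref{prop.symc}, take the very same matrices $M_1=\left(\begin{smallmatrix}3 & -a\\ 0&0\end{smallmatrix}\right)$, $M_2=\left(\begin{smallmatrix}0&0\\ -b&3\end{smallmatrix}\right)$, eliminate the central generators via $3x^3=3y_1$, $3y^3=3y_2$, and prove centrality of $x^3,y^3$ in $J(\sfw)$ using $ab\neq 1$. Your commutator identities $xR_2-R_2x=[x^3,y]-c_6[y^3,x]$ and $yR_1-R_1y=[y^3,x]-c_5[x^3,y]$ are exactly the paper's substitution computation rephrased as identities in the free algebra, so the two arguments coincide.
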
 

\begin{proof} If $M_1=\left(\begin{smallmatrix} 3 & -a \\ 0 & 0 \end{smallmatrix}\right), M_2=\left(\begin{smallmatrix} 0 & 0 \\ -b & 3 \end{smallmatrix}\right)\in M_2(k)$, then $A(M_1, M_2)$ is generated by $x, y, X, Y$ with the defining relations 
$$3x^3-3X, xy^2+yxy+y^2x+aX, yx^2+xyx+x^2y+bY, 3y^3-3Y; \; \textnormal { $X, Y$ are central.}$$
It follows that 
$$A(M_1, M_2)=k\<x, y\>/(xy^2+yxy+y^2x+ax^3, yx^2+xyx+x^2y+by^3; \; \textnormal { $x^3, y^3$ are central}).$$

On the other hand, if $\sfw\in \Sym^4V$ is a Calabi-Yau superpotential, then 
$$J(\sfw)\cong k\<x, y\>/(xy^2+yxy+y^2x+ax^3, yx^2+xyx+x^2y+by^3)$$ 
for some $a, b\in k$ by Proposition \ref{prop.symc}, 
so it is enough to show that $x^3, y^3$ are central in $J(\sfw)$.  In $J(\sfw)$, 
\begin{align*}
xy^3-y^3x & = -(yxy+y^2x+ax^3)y+y(xy^2+yxy+ax^3) = -ax^3y+ayx^3 \\
& =ax(yx^2+xyx+by^3)-a(xyx+x^2y+by^3)x = abxy^3-aby^3x,
\end{align*} 
so $(1-ab)(xy^3-y^3x)$.  By the classification in Proposition \ref{prop.symc}, $ab\neq 1$, so $xy^3=y^3x$ in $J(\sfw)$.  By symmetry, $yx^3=x^3y$ in $J(\sfw)$, so $x^3, y^3$ are central in $J(\sfw)$. 
\end{proof}

\medskip
\noindent\textit{Acknowledgment.} 
The second author is indebted to Yasuhiro Ishitsuka for providing valuable information on divisors in $\PP^1 \times \PP^1$.
The authors thank the referee for reading the manuscript carefully and providing useful suggestions.

\end{document}